\theoremstyle{definition}
  \newtheorem{defn}{\protect\definitionname}
  \theoremstyle{definition}
  \newtheorem{example}{\protect\examplename}
  \theoremstyle{plain}
  \newtheorem{thm}{\protect\theoremname}
  \theoremstyle{remark}
  \newtheorem{rem}{\protect\remarkname}
  \theoremstyle{plain}
  \newtheorem{cor}{\protect\corollaryname}
  \theoremstyle{plain}
  \newtheorem{lem}{\protect\lemmaname}
   \newenvironment{proof}[1][\proofname]{\par
     \normalfont\topsep6\p@\@plus6\p@\relax
     \trivlist
     \itemindent\parindent
     \item[\hskip\labelsep
           \scshape
       #1]\ignorespaces
   }{%
     \endtrivlist\@endpefalse
   }
   \providecommand{\proofname}{Proof}
  \theoremstyle{plain}
  \newtheorem{prop}{\protect\propositionname}
\setlist{leftmargin=*}
  \providecommand{\corollaryname}{Corollary}
  \providecommand{\definitionname}{Definition}
  \providecommand{\examplename}{Example}
  \providecommand{\lemmaname}{Lemma}
  \providecommand{\propositionname}{Proposition}
  \providecommand{\remarkname}{Remark}
\providecommand{\theoremname}{Theorem}
\begin{document}
\global\long\def\R{\mathbb{R}}

\global\long\def\N{\mathbb{N}}

\global\long\def\C{\mathbb{C}}

\global\long\def\Cl{C}

\global\long\def\tr{\mathrm{tr}}

\global\long\def\Se{\mathcal{S}}

\global\long\def\P{L_{\beta_{1},\beta_{2}}^{\infty}(\Omega)}

\global\long\def\V{H_{0}^{1}(\Omega;\C)}

\global\long\def\Vp{H^{-1}(\Omega;\C)}

\global\long\def\Vr{H_{0}^{1}(\Omega)}

\global\long\def\Honer{H^{1}(\Omega)}

\global\long\def\Hone{H^{1}(\Omega;\C)}

\global\long\def\Hhalf{H^{1/2}(\Omega;\C)}

\global\long\def\Hhalfr{H^{1/2}(\Omega)}

\global\long\def\phi{\varphi}

\global\long\def\epsilon{\varepsilon}

\global\long\def\div{{\rm div}}

\global\long\def\ld{L^{2}(\Omega;\C^{3})}

\global\long\def\ldr{L^{2}(\Omega;\R^{3})}

\global\long\def\linf{L^{\infty}(\Omega;\C)}

\global\long\def\linfr{L^{\infty}(\Omega;\R)}

\global\long\def\Conealp{\mathcal{C}^{1,\alpha}(\overline{\Omega};\C)}

\global\long\def\Conealpr{\mathcal{C}^{1,\alpha}(\overline{\Omega})}

\global\long\def\Calpr{\mathcal{C}^{0,\alpha}(\overline{\Omega};\R^{d\times d})}

\global\long\def\Calprsca{\mathcal{C}^{0,\alpha}(\overline{\Omega})}

\global\long\def\Czeroonescal{\mathcal{C}^{0,1}(\overline{\Omega})}

\global\long\def\Calpvect{\mathcal{C}^{0,\alpha}(\overline{\Omega};\R^{d})}

\global\long\def\Cone{\Cl^{1}(\overline{\Omega};\C)}

\global\long\def\Coner{\Cl^{1}(\overline{\Omega})}

\global\long\def\Kad{K_{ad}}

\global\long\def\mina{\lambda}

\global\long\def\maxa{\Lambda}

\global\long\def\Hcurl{H(\curl,\Omega)}

\global\long\def\Hmu{H^{\mu}(\curl,\Omega)}

\global\long\def\Hocurl{H_{0}(\curl,\Omega)}

\global\long\def\Hdiv{H(\div,\Omega)}

\global\long\def\k{\omega}

\global\long\def\div{{\rm div}}

\global\long\def\curl{{\rm curl}}

\global\long\def\supp{{\rm supp}}

\global\long\def\sp{{\rm span}}

\global\long\def\ii{\mathbf{i}}

\global\long\def\E{E_{\k}}

\global\long\def\Ei{E_{\k}^{i}}

\global\long\def\Hi{H_{\k}^{i}}

\global\long\def\H{H_{\k}}

\global\long\def\EE{\tilde{E}_{\k}}

\global\long\def\bo{\partial\Omega}

\global\long\def\Co{\Cl(\overline{\Omega};\C^{3})}

\global\long\def\ve{\theta}

\global\long\def\so{\hat{\sigma}}

\global\long\def\order{\kappa}

\global\long\def\p{C}

\global\long\def\pert{t}

\global\long\def\e{\mathbf{e}}

\global\long\def\epsi{\varepsilon}

\global\long\def\A{\mathcal{A}}

\title[Enforcing non-zero constraints in PDEs and applications ]{Enforcing local non-zero constraints in PDEs and applications to
hybrid imaging problems }

\author{Giovanni S. Alberti}

\address{Department of Mathematics and Applications, \'Ecole Normale Supérieure, 45 rue d'Ulm, 75005 Paris, France}

\email{giovanni.alberti@ens.fr}

\date{May 9, 2015}
\begin{abstract}
We study the boundary control of solutions of the Helmholtz and Maxwell
equations to enforce local non-zero constraints. These constraints
may represent the local absence of nodal or critical points, or that
certain functionals depending on the solutions of the PDE do not vanish
locally inside the domain. Suitable boundary conditions are classically
determined by using complex geometric optics solutions. This work
focuses on an alternative approach to this issue based on the use
of multiple frequencies. Simple boundary conditions and a finite number
of frequencies are explicitly constructed independently of the coefficients
of the PDE so that the corresponding solutions satisfy the required
constraints. This theory finds applications in several hybrid imaging
modalities: some examples are discussed.
\end{abstract}

\keywords{Helmholtz equation, Maxwell's equations, boundary control, non-zero
constraints, hybrid imaging, coupled-physics inverse problems, multiple
frequencies}

\subjclass[2010]{35J25, 35Q61, 35R30}

\maketitle

\section{\label{sec:Introduction}Introduction}

The boundary control of the partial differential equation
\begin{equation}
\left\{ \begin{array}{l}
-\div(a\,\nabla u_{\k}^{i})-(\k^{2}\epsi+\ii\k\sigma)u_{\k}^{i}=0\qquad\text{in \ensuremath{\Omega},}\\
u_{\k}^{i}=\phi_{i}\qquad\text{on \ensuremath{\partial\Omega},}
\end{array}\right.\label{eq:helmholtz-intro-real}
\end{equation}
to enforce local non-zero constraints is the main topic of this work,
where $\Omega\subseteq\R^{d}$ is a smooth bounded domain, $a\in L^{\infty}(\Omega;\R^{d\times d})$
is a uniformly elliptic symmetric tensor and $\epsilon,\sigma\in L^{\infty}(\Omega;\R)$
satisfy $\epsilon>0$ and $\sigma\ge0$. More precisely, we want to
find suitable $\phi_{i}$'s such that the corresponding solutions
to \eqref{eq:helmholtz-intro-real} satisfy certain non-zero constraints
in $\Omega$. For example, we may look for $d+1$ boundary conditions
$\phi_{1},\dots,\phi_{d+1}$ such that, at least locally
\begin{equation}
\left|u_{\k}^{1}\right|\ge\p,\;\;\left|\det\begin{bmatrix}\nabla u_{\k}^{2} & \cdots & \nabla u_{\k}^{d+1}\end{bmatrix}\right|\ge\p,\;\;\bigl|\det\begin{bmatrix}u_{\k}^{1} & \cdots & u_{\k}^{d+1}\\
\nabla u_{\k}^{1} & \cdots & \nabla u_{\k}^{d+1}
\end{bmatrix}\bigr|\ge\p\label{eq:intro_1}
\end{equation}
for some $C>0$ or, more generally, for $b$ boundary values $\phi_{1},\dots,\phi_{b}$
such that the corresponding solutions verify $r$ conditions given
by
\begin{equation}
\bigl|\zeta^{j}\bigl(u_{\k}^{1},\dots,u_{\k}^{b}\bigr)\bigr|\ge C,\qquad j=1,\dots,r,\label{eq:intro_2}
\end{equation}
where the maps $\zeta^{j}$ depend on $u_{\k}^{i}$ and their derivatives.
Determinant constraints are very common in elasticity theory. As discussed
below, our motivation comes from  several hybrid imaging techniques
\cite{bal2012_review}. 

The problem of constructing such boundary conditions is usually set
for a fixed frequency $\k>0$. The classical way to tackle this problem
is by means of the so called complex geometric optics solutions. Introduced
by Sylvester and Uhlmann \cite{sylv1987}, CGO solutions are particular
highly oscillatory solutions of the Helmholtz equation \eqref{eq:helmholtz-intro-real}
in $\R^{d}$ such that for $t\gg1$ ($a=1$, $d=2$)
\[
u^{(t)}(x)\approx e^{tx_{1}}\left(\cos(tx_{2})+\ii\sin(tx_{2})\right)\quad\text{in }\Cl^{1}(\overline{\Omega};\C),
\]
and can be used to determine suitable illuminations by using the estimates
proved by Bal and Uhlmann \cite{bal2010inverse} (see also \cite{bal2012inversediffusion,bal2012_review,ammari2012quantitative}).
For example, setting $\phi_{1}\approx u_{|\bo}^{(t)}$, $\phi_{2}\approx\Re u_{|\bo}^{(t)}$
and $\phi_{3}\approx\Im u_{|\bo}^{(t)}$ gives an open set of boundary
conditions whose solutions satisfy the first two constraints of \eqref{eq:intro_1}.
Thus, CGO solutions represent a very important theoretical tool, but
have several drawbacks. First, the suitable $\phi_{i}$'s can only
be constructed when the parameters are smooth. Second, since $t\gg1$,
the exponential decay in the first variable gives small lower bounds
$C$ and the high oscillations make this approach hardly implementable.
Further, the construction depends on the coefficients $a$, $\epsilon$
and $\sigma$, that are usually unknown in inverse problems. Another
construction method uses the Runge approximation, which ensures that
locally the solutions behave as in the constant coefficient case \cite{bal2011reconstruction}.

In \cite{alberti2013multiple}, where the case $\sigma=0$ and the
constraints in \eqref{eq:intro_1} were considered, we proposed an
alternative approach to this issue based on the use of multiple frequencies
in a fixed admissible range $\A=[K_{\text{min}},K_{\text{max}}]\subseteq\R_{+}$.
The technique relies upon the assumption that the $\phi_{i}$'s are
chosen in such a way that the required constraints are satisfied in
the case $\k=0$, i.e. for the conductivity equation
\[
\left\{ \begin{array}{l}
-\div(a\,\nabla u_{0}^{i})=0\qquad\text{in \ensuremath{\Omega},}\\
u_{0}^{i}=\phi_{i}\qquad\text{on \ensuremath{\partial\Omega},}
\end{array}\right.
\]
for which the maximum principle and results on the absence of critical
points \cite{alessandrinimagnanini1994,bal-courdurier-2013} usually
make the problem much easier. Under this assumption, there exist a
finite $K\subseteq\A$ and an open cover $\Omega=\cup_{\k\in K}\Omega_{\k}$
such that the constraints are satisfied in each $\Omega_{\k}$ for
$u_{\k}^{i}$. The proof is based on the regularity theory and on
the holomorphicity of the map $\k\mapsto u_{\k}^{i}$.

The main novelty of this paper lies in the fully constructive proof.
The set $K$ is constructed explicitly as a uniform sampling of the
admissible range $\A$ and depends only on the a priori data. Similarly,
the constant $C$ in \eqref{eq:intro_2} is estimated a priori and
depend on the coefficients only through the a priori bounds. This
improvement has been achieved by using a quantitative version of the
unique continuation theorem for holomorphic functions proved by Momm
\cite{momm-1990} and a thorough analysis of \eqref{eq:helmholtz-intro-real}.
We consider here the case $\sigma\ge0$ and the general constraints
 \eqref{eq:intro_2}.

It is natural to study this issue for the full Maxwell's equations,
for which the Helmholtz equation often acts as an approximation in
the context of hybrid imaging. Maxwell's equations read
\begin{equation}
\left\{ \begin{array}{l}
\curl E_{\k}^{i}=\ii\k\mu H_{\k}^{i}\qquad\mbox{ in }\Omega,\\
\curl H_{\k}^{i}=-\ii(\k\epsi+\ii\sigma)E_{\k}^{i}\qquad\mbox{ in }\Omega,\\
E_{\k}^{i}\times\nu=\phi_{i}\times\nu\mbox{ on }\partial\Omega.
\end{array}\right.\label{eq:maxwell-intro}
\end{equation}
As before, we look for illuminations $\phi_{i}$ and frequencies $\k$
such that the corresponding solutions verify $r$ conditions given
by 
\begin{equation}
\bigl|\zeta^{j}\bigl((E_{\k}^{1},H_{\k}^{1}),\dots,(E_{\k}^{b},H_{\k}^{b})\bigr)\bigr|\ge C>0,\qquad j=1,\dots,r.\label{eq:zeta intro-maxwell}
\end{equation}
An example of such conditions is given by $\bigl|\det\begin{bmatrix}E_{\k}^{1} & E_{\k}^{2} & E_{\k}^{3}\end{bmatrix}\bigr|\ge C$.
CGO solutions for Maxwell's equations have been studied by Colton
and Päivärinta \cite{colton-paivarinta-1992}. As before, they can
be used to obtain suitable solutions \cite{chen-yang-2013}, but have
the drawbacks discussed before. In \cite{albertigsII}, the multi
frequency approach was generalised to \eqref{eq:maxwell-intro}. The
contribution of this paper is in the quantitative estimates for the
number of needed frequencies and for the constant $C$ in \eqref{eq:zeta intro-maxwell},
both determined a priori.

This approach has been recently successfully
adapted to the conductivity equation with complex coefficients  in \cite{ammari2014admittivity} 
and to the Helmholtz equation with Robin boundary conditions in \cite{alberti-ammari-ruan-2014}.

This theory finds applications in several hybrid imaging inverse problems,
where the unknown parameters have to be reconstructed from internal
data \cite{kuchment2011_review,bal2012_review,alberti-capdeboscq-2014,alberti-capdeboscq-2015}. Many hybrid problems
are governed by the Helmholtz equation \eqref{eq:helmholtz-intro-real},
e.g. microwave imaging by ultrasound deformation \cite{triki2010,cap2011},
quantitative thermo-acoustic \cite{bal2011quantitative,ammari2012quantitative},
transient elastography and magnetic resonance elastography \cite{bal2011reconstruction}.
The internal measurements are always linear or quadratic functionals
of $u_{\k}^{\phi}$ and of $\nabla u_{\k}^{\phi}$. For example, in
microwave imaging by ultrasound deformation, that is modelled by \eqref{eq:helmholtz-intro-real}
with a scalar-valued $a$ and $\sigma=0$, the internal measurements
have the form
\[
a(x)\left|\nabla u_{\k}^{\phi}\right|^{2}(x),\qquad\epsilon(x)\left|u_{\k}^{\phi}\right|(x)^{2},\qquad x\in\Omega,
\]
and in thermo-acoustic, modelled by \eqref{eq:helmholtz-intro-real}
with $a=\epsilon=1$ and $\sigma>0$, we measure
\[
\sigma(x)\left|u_{\k}^{\phi}\right|(x)^{2},\qquad x\in\Omega.
\]
In order for these measurements to be meaningful at every $x\in\Omega$,
they need to be non-zero: otherwise, we would measure only noise.
Moreover, we shall see that conditions like \eqref{eq:intro_1} or,
more generally, \eqref{eq:intro_2} for some map $\zeta$, are necessary
to reconstruct the unknown parameters $a$, $\epsilon$ and/or $\sigma$
or to obtain good stability estimates \cite{triki2010,kuchment2012stabilizing,bal2011reconstruction}.
Thus, being able to determine suitable illuminations independently
of the unknown parameters is fundamental, and these can be given by
the multi-frequency approach discussed in this paper. It should be
mentioned that stability of Hölder type has been proved by Alessandrini
in the context of microwave imaging with ultrasounds with $a=1$ without
requiring any non-zero constraint \cite{alessandrini2014global}.

Similarly, several problems are modelled by the Maxwell's equations
\eqref{eq:maxwell-intro} \cite{bal2013reconstruction,bal2013hybrid,chen-yang-2013},
and the inversion usually requires the availability of solutions satisfying
certain non-zero constraints inside the domain, given by \eqref{eq:zeta intro-maxwell},
for some maps $\zeta^{j}$ depending on the particular problem under
consideration. As above, the multi-frequency approach discussed in
this work can be applied to all these situations.

It is worth mentioning that the underlying physical principle was employed by Renzhiglova
et al. in an experimental study on magneto-acousto-eletrical tomography, where
dual-frequency ultrasounds were used to obtain non-zero internal data \cite{Renzhiglova}.

This paper is structured as follows. The main results are stated and
commented in Section~\ref{sec:Main-results}, and their proofs are
detailed in Section~\ref{sec:Proof-of-the}. Several applications
to hybrid imaging problems are described in Section~\ref{sec:Applications-to-hybrid}.
Some relevant open problems are discussed in Section~\ref{sec:Conclusions}.
Finally, some  basic tools are presented in Appendix~\ref{sec:Some-basic-tools}.

\section{\label{sec:Main-results}Main results}

\subsection{The Helmholtz equation}

Given a smooth bounded domain $\Omega\subseteq\R^{d}$, $d=2,3$,
we consider the Dirichlet boundary value problem
\begin{equation}
\left\{ \begin{array}{l}
-\div(a\,\nabla u_{\k}^{i})-(\k^{2}\epsi+\ii\k\sigma)\, u_{\k}^{i}=0\qquad\text{in \ensuremath{\Omega},}\\
u_{\k}^{i}=\phi_{i}\qquad\text{on \ensuremath{\partial\Omega}.}
\end{array}\right.\label{eq:helmholtz-multi}
\end{equation}
We assume that $a\in L^{\infty}(\Omega;\R^{d\times d})$ and $\epsi\in\linfr$
and satisfy\begin{subequations}\label{eq:ellipticity_a-epsi-multi}
\begin{align}
 & a=a^{T},\qquad\maxa^{-1}\left|\xi\right|^{2}\le\xi\cdot a\xi\le\maxa\left|\xi\right|^{2},\qquad\xi\in\R^{d},\label{eq:ellipticity_a-multi}\\
 & \maxa^{-1}\le\epsi\le\maxa\;\text{ almost everywhere}\label{eq:bounds_epsilon-multi}
\end{align}
\end{subequations}for some $\maxa>0$ and that $\sigma\in\linfr$
and satisfies either
\begin{align}
 & \sigma=0,\text{ or}\label{eq:sigma equal 0-multi}\\
 & \maxa^{-1}\le\sigma\le\maxa\;\text{ almost everywhere.}\label{eq:sigma bounds below-multi}
\end{align}
In electromagnetics, $\varepsilon$ is the electric permittivity,
$\sigma$ is the electric conductivity and $a$ is the inverse of
the magnetic permeability. \textcolor{black}{Take $\order\in\N$ and $\alpha\in(0,1)$. Suppose $\phi_{i}\in\Cl^{\order,\alpha}(\overline{\Omega};\C)$
and
\begin{equation}
a\in\Cl^{\order-1,\alpha}(\overline{\Omega};\R^{d\times d}),\quad\epsi,\sigma\in W^{\order-1,\infty}(\Omega;\R)\quad\text{if $\order\ge 1$.}\label{eq:regularity assumption-helmh-multi}
\end{equation}
}

Let $\A=[K_{min},K_{max}]\subseteq B(0,M)$ represent the frequencies
we have access to, for some $0<K_{min}<K_{max}\le M$. By standard
elliptic theory (Proposition~\ref{prop:helmoltz-wellposedness}),
problem \eqref{eq:helmholtz-multi} is well-posed for every $\k\in D$,
where
\begin{equation}
D=\begin{cases}
\C\setminus\sqrt{\Sigma} & \text{if \eqref{eq:sigma equal 0-multi} holds,}\\
\{\k\in\C:\left|\Im\k\right|<\eta\} & \text{if \eqref{eq:sigma bounds below-multi} holds.}
\end{cases}\label{eq:definition of D}
\end{equation}
Here $\Sigma=\{\lambda_{l}:l\in\N^{*}\}$ is the set of the Dirichlet
eigenvalues of \textcolor{black}{problem \eqref{eq:helmholtz-multi}} ($\sqrt{\Sigma}=\{\k\in\C:\k^{2}\in\Sigma\}$),
and $\eta>0$ depends only on $\Omega$ and $\maxa$. Figure~\vref{fig:domain D}
represents the domain $D$ and the admissible set of frequencies $\A$.
Note that $u_{\k}^{i}\in\Cl^{\order}(\overline{\Omega};\C)$ by elliptic
regularity theory (Proposition~\ref{pro: Helmholtz regularity}).
\begin{defn}
Given a finite set $K\subseteq\A$ and $\phi_{1},\dots,\phi_{b}\in\Cl^{\order,\alpha}(\overline{\Omega};\C)$,
we say that $K\times\{\phi_{1},\dots,\phi_{b}\}$ is a \emph{set of
measurements\index{set of measurements (Helmholtz)}}.
\end{defn}
We shall study a particular class of sets of measurements, namely
those whose corresponding solutions $u_{\k}^{i}$ ($i=1,\dots,b$)
to \eqref{eq:helmholtz-multi} and their derivatives up to the $\kappa$-th
order satisfy $r$  constraints in $\Omega$. These are described
by a map $\zeta$. For $b,r\in\N^{*}$ let\begin{subequations}\label{eq:assumptions map zed}
\begin{align}
 & \zeta=(\zeta^{1},\dots,\zeta^{r})\colon\Cl^{\order}(\overline{\Omega};\C){}^{b}\longrightarrow\Cl(\overline{\Omega};\C)^{r}\;\text{ be holomorphic, such that}\label{eq:assumptions map zed-def}\\
 & \bigl\Vert\zeta(u^{1},\dots,u^{b})\bigr\Vert_{\Cl(\overline{\Omega};\C)^{r}}\le c_{\zeta}(1+\bigl\Vert(u^{1},\dots,u^{b})\bigr\Vert_{\Cl^{\order}(\overline{\Omega};\C){}^{b}}^{s})\text{ and}\label{eq:assumptions map zed-a}\\
 & \bigl\Vert D\zeta_{(u^{1},\dots,u^{b})}\bigr\Vert_{\mathcal{B}(\Cl^{\order}(\overline{\Omega};\C){}^{b},\Cl(\overline{\Omega};\C)^{r})}\le c_{\zeta}(1+\bigl\Vert(u^{1},\dots,u^{b})\bigr\Vert_{\Cl^{\order}(\overline{\Omega};\C){}^{b}}^{s})\label{eq:assumptions map zed-b}
\end{align}
\end{subequations}for some $c_{\zeta}>0$ and $s\in\N^{*}$. \textcolor{black}{(For the definition of holomorphic function, see  $\S$~\ref{sub:Preliminary-lemmata}.)} We shall
use the notation $C_{\zeta}=(c_{\zeta},s,r,\order,\alpha)$.
\begin{example}
\label{exa:zeta_det}We consider here the constraints given in \eqref{eq:intro_1}.
Take $b=d+1$, $r=3$ and $\order=1$ and let $\zeta_{\det}\colon\Cl^{1}(\overline{\Omega};\C){}^{d+1}\longrightarrow\Cl(\overline{\Omega};\C)^{3}$
be defined by 
\begin{align*}
 & \zeta_{\det}^{1}(u^{1},\dots,u^{d+1})=u^{1},\\
 & \zeta_{\det}^{2}(u^{1},\dots,u^{d+1})=\det\begin{bmatrix}\nabla u^{2} & \cdots & \nabla u^{d+1}\end{bmatrix},\\
 & \zeta_{\det}^{3}(u^{1},\dots,u^{d+1})=\det\begin{bmatrix}u^{1} & \cdots & u^{d+1}\\
\nabla u^{1} & \cdots & \nabla u^{d+1}
\end{bmatrix}.
\end{align*}
The map $\zeta_{\det}$ is holomorphic (Lemma~\ref{lem:analytic functions}).
Simple calculations show that \eqref{eq:assumptions map zed-a} holds
true with $s_{b}=d+1$ and \eqref{eq:assumptions map zed-b} with
$s_{c}=d$, and so we can set $s=d+1$.
\end{example}
We introduce the particular class of sets of measurements we are interested
in.
\begin{defn}
\label{def:zeta-complete}Take $\Omega'\subseteq\Omega$. Let $b,r\in\N^{*}$
be two positive integers, $C>0$ and let $\zeta$ be as in \eqref{eq:assumptions map zed}.
A set of measurements $K\times\{\phi_{1},\dots,\phi_{b}\}$ is \emph{$(\zeta,\p)$-complete
in $\Omega'$} if there exists an open cover of $\Omega'$
\[
\Omega'=\bigcup_{\k\in K\cap D}\Omega'_{\k},
\]
such that for any $\k\in K\cap D$ 
\begin{equation}
\bigl|\zeta^{j}\bigl(u_{\k}^{1},\dots,u_{\k}^{b}\bigr)(x)\bigr|\ge\p,\qquad j=1,\dots,r,\; x\in\Omega'_{\k}.\label{eq:zeta-complete}
\end{equation}

\end{defn}
Namely, a $(\zeta,C)$-complete set gives a cover of $\Omega'$ into
$\#(K\cap D)$ subdomains, such that the constraints given in \eqref{eq:zeta-complete}
are satisfied in each subdomain for different frequencies.

We now describe how to choose the frequencies in the admissible set
$\A$. Let $K^{(n)}$\index{K@$K^{(n)}$} be the uniform partition
of $\A$ into $n-1$ intervals so that $\#K^{(n)}=n$, namely
\begin{equation}
K^{(n)}=\{\k_{1}^{(n)},\dots,\k_{n}^{(n)}\},\qquad\k_{i}^{(n)}=K_{min}+\frac{(i-1)}{(n-1)}(K_{max}-K_{min}).\label{eq:K^(n)}
\end{equation}
\textcolor{black}{Set $\left|\A\right|=K_{max}-K_{min}$.} The main result of this paper regarding the Helmholtz equation reads
as follows.
\begin{thm}
\label{thm:quantitative bounds-helmholtz-zeta}Assume that \eqref{eq:ellipticity_a-epsi-multi},
\eqref{eq:regularity assumption-helmh-multi} and either \eqref{eq:sigma equal 0-multi}
or \eqref{eq:sigma bounds below-multi} hold. Let $\zeta$ be as in
\eqref{eq:assumptions map zed} and \textcolor{black}{ assume that there exist $\phi_{1},\dots,\phi_{b}\in\Cl^{\order,\alpha}(\overline{\Omega};\C)$ and  $C_{0}>0$ such that
\begin{equation}
\bigl|\zeta^{j}\bigl(u_{0}^{1},\dots,u_{0}^{b}\bigr)(x)\bigr|\ge\p_{0},\quad j=1,\dots,r,\; x\in\Omega'.\label{eq:assumption k=00003D0}
\end{equation}
}Then there exist $C>0$ and $n\in\N$ depending
on $\Omega$,  $\maxa$, $\left|\A\right|$, $M$, $C_{\zeta}$, $\left\Vert a\right\Vert _{\Cl^{\order-1,\alpha}(\overline{\Omega};\R^{d\times d})}$,
$\left\Vert (\epsilon,\sigma)\right\Vert _{W^{\order-1,\infty}(\Omega;\R)^{2}}$,
$\left\Vert \phi_{i}\right\Vert _{\Cl^{\order,\alpha}(\overline{\Omega};\C)}$
and $C_{0}$ such that
\[
K^{(n)}\times\{\phi_{1},\dots,\phi_{b}\}
\]
is a $(\zeta,C)$-complete set of measurements in $\Omega'$.
\end{thm}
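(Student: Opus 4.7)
The plan is to reduce the spatial-pointwise constraint problem to a one-dimensional quantitative unique continuation statement for holomorphic functions in the single complex variable $\k$, and then use uniform sampling of $\A$ to cover $\Omega'$.

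For each fixed $x\in\overline{\Omega'}$ and each $j\in\{1,\dots,r\}$, I would introduce the scalar map
\[
f_{x,j}(\k)\;:=\;\zeta^{j}\bigl(u_{\k}^{1},\dots,u_{\k}^{b}\bigr)(x),\qquad \k\in D.
\]
Using the holomorphy of $\k\mapsto u_{\k}^{i}$ on $D$ (a standard consequence of the analytic Fredholm theory applied to \eqref{eq:helmholtz-multi}, which is mentioned in the excerpt), the holomorphy hypothesis on $\zeta$ in \eqref{eq:assumptions map zed-def}, and the chain rule, each $f_{x,j}$ is holomorphic on $D$. The growth bounds \eqref{eq:assumptions map zed-a}--\eqref{eq:assumptions map zed-b} together with the elliptic a priori estimates $\|u_{\k}^{i}\|_{\Cl^{\order}(\overline{\Omega};\C)}\le M_{1}$ give a uniform bound $\|f_{x,j}\|_{L^{\infty}(\tilde D)}\le M_{2}$ on a suitable compact subset $\tilde D\subseteq D$ containing $\A\cup\{0\}$, where $M_{1},M_{2}$ depend only on the a priori data listed in the statement.

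Next I would combine this with the lower bound $|f_{x,j}(0)|\ge C_{0}$ from \eqref{eq:assumption k=00003D0} and invoke Momm's quantitative unique continuation theorem for holomorphic functions \cite{momm-1990}, which is the key analytic ingredient: for a holomorphic function $f$ on $\tilde D$ with a prescribed value $\ge C_{0}$ at $0$ and a uniform sup-norm bound $M_{2}$, the one-dimensional Lebesgue measure (or an appropriate density) of the sublevel set $\{\k\in\A:|f(\k)|<C\}$ is bounded above by an explicit function of $C_{0},M_{2},C$ and the geometry of $\tilde D$ versus $\A$. In particular, for $C$ chosen sufficiently small depending only on the a priori data, this measure is strictly less than $|\A|/(r\cdot\text{something})$, so that a uniform partition of $\A$ into $n-1$ subintervals with $n$ large enough (independent of $x$) must contain at least one point $\k_{i}^{(n)}\in K^{(n)}\cap D$ with
\[
\bigl|f_{x,j}\bigl(\k_{i}^{(n)}\bigr)\bigr|\ge C\quad\text{for every }j=1,\dots,r.
\]

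Finally I would convert the pointwise statement into the required open cover. For each $\k\in K^{(n)}\cap D$ set
\[
\Omega'_{\k}\;:=\;\Bigl\{x\in\Omega':\bigl|\zeta^{j}\bigl(u_{\k}^{1},\dots,u_{\k}^{b}\bigr)(x)\bigr|>C,\;j=1,\dots,r\Bigr\}.
\]
Since $\zeta^{j}(u_{\k}^{1},\dots,u_{\k}^{b})\in\Cl(\overline{\Omega};\C)$, each $\Omega'_{\k}$ is open in $\Omega'$. The previous step shows $\Omega'=\bigcup_{\k\in K^{(n)}\cap D}\Omega'_{\k}$ (a slight relaxation from $\ge C$ to $>C/2$, say, eliminates boundary issues if needed), which is exactly the $(\zeta,C)$-completeness of $K^{(n)}\times\{\phi_{1},\dots,\phi_{b}\}$.

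The hard part will be the second paragraph: making Momm's quantitative bound genuinely uniform in $x$ and in the coefficients $(a,\epsilon,\sigma)$, and tracking all dependencies through the holomorphic extension so that both $C$ and $n$ can be written as explicit functions of the a priori quantities $\Omega$, $\maxa$, $|\A|$, $M$, $C_{\zeta}$, the norms of $a,\epsilon,\sigma,\phi_{i}$, and $C_{0}$. Everything else (the reduction to a one-variable holomorphic problem, the chain rule for holomorphic maps, and the passage from a pointwise selection to an open cover by continuity) is essentially structural.
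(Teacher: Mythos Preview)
Your overall architecture is right---reduce to a one-variable holomorphic problem in $\k$, apply a quantitative unique continuation principle, and pass to an open cover---and in the case where \eqref{eq:sigma bounds below-multi} holds it is essentially the paper's proof. But there is a genuine gap in the case $\sigma=0$.

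You claim a uniform bound $\|f_{x,j}\|_{L^{\infty}(\tilde D)}\le M_{2}$ on ``a suitable compact subset $\tilde D\subseteq D$ containing $\A\cup\{0\}$''. When \eqref{eq:sigma equal 0-multi} holds, $D=\C\setminus\sqrt{\Sigma}$, and nothing prevents resonances $\sqrt{\lambda_{l}}$ from lying inside $\A$; in that case no compact subset of $D$ contains $\A$, and the elliptic bound $\|u_{\k}^{i}\|_{\Cl^{\order}}\le M_{1}$ fails near these points (it carries the factor $1+\sup_{l}|\lambda_{l}-\k^{2}|^{-1}$, see Proposition~\ref{prop:helmoltz-wellposedness}). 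The paper deals with this in two steps you have not mentioned: first, a Weyl-type count (Lemma~\ref{lem:A'}) shows there are at most $N$ eigenvalues in $\A^{2}$ with $N$ controlled a priori, so one can pass to a subinterval $\tilde\A\subseteq\A$ at a priori distance $\delta$ from $\Sigma$; second, the poles at $\pm\sqrt{\lambda_{1}},\dots,\pm\sqrt{\lambda_{N}}$ are removed by multiplying by $\prod_{l\le N}(\lambda_{l}-\k^{2})^{s}/\lambda_{l}^{s}$ (Lemma~\ref{lem:poles}), producing a genuinely bounded holomorphic function on all of $B(0,K_{\max})$ to which Momm can be applied. Without these steps your uniform bound $M_{2}$ is not available and the argument does not close.

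A smaller point: your description of Momm's theorem as a sublevel-set measure bound is not what the paper uses. Lemma~\ref{lem:momm-ellipse} yields, for each $x$, a single $\k_{x}\in\A$ with $|g_{x}(\k_{x})|\ge C$. The passage from this to a fixed uniform grid $K^{(n)}$ is done via the \emph{derivative} bound $\|\partial_{\k}\theta_{\k}^{j}\|_{C(\overline{\Omega})}\le C$ (Lemma~\ref{lem:bounds on u}), which gives an a priori interval $[\k_{x}-Z,\k_{x}+Z]$ of good frequencies; a grid of mesh $\le Z$ then hits it. You did not mention this derivative step, and your sublevel-set formulation would need a separate justification.
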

We now discuss assumption \eqref{eq:assumption k=00003D0}, the dependence
of $C$ on $\left|\A\right|$ and $M$ and the regularity assumption
on the coefficients.
\begin{rem}
\label{rem:dependence-helmholtz}This result allows an a priori construction
of $(\zeta,C)$-complete sets, since $C$ and $n$ depend only on
a priori data, provided that $\phi_{1},\dots,\phi_{b}$ are chosen
in such a way that \eqref{eq:assumption k=00003D0} holds true. It
is in general easier to satisfy \eqref{eq:assumption k=00003D0} than
\eqref{eq:zeta-complete}, as $\k=0$ makes problem \eqref{eq:helmholtz-multi}
simpler. More precisely, there exist many results regarding the conductivity
equation \cite{alessandrinimagnanini1994,cap2009,widlak2012hybrid,bal-courdurier-2013,bal2012inversediffusion}
(see also the proof of Corollary~\ref{cor:det-complete}). \textcolor{black}{It is worth noting that, especially in 3D, satisfying \eqref{eq:assumption k=00003D0} may still be highly non trivial, and  the strategy used for the case $\k=0$ may be applicable for higher frequencies as well.}

Note that \eqref{eq:helmholtz-multi} with $\k=0$ does not depend
on $\epsilon$ and $\sigma$, so that the construction of $\phi_{1},\dots,\phi_{b}$
is always independent of $\epsilon$ and $\sigma$ but may depend
on $a$. 

There exist \emph{occulting} illuminations, i.e. boundary conditions
for which a finite number of frequencies are not sufficient, and so
assumption \eqref{eq:assumption k=00003D0} cannot be completely removed
\cite{alberti2013multiple}. Yet, this assumption can be weakened
(see Remark~\ref{rem:generic boundary}).
\end{rem}

\begin{rem}
The proof of this result is based on Lemma~\ref{lem:momm-ellipse}.
Thus, the constant $C$ goes to zero as
$\left|\A\right|\to0$, $M\to\infty$ \textcolor{black}{or $C_0\to 0$} \textcolor{black}{(see Remark~\ref{rem:momm} for the precise dependence)}. In particular, this approach
gives good estimates for frequencies in a moderate regime (e.g. with
microwaves), but these estimates get worse for very high frequencies. \textcolor{black}{This should be taken into account in the presence of noisy measurements.}
\end{rem}

\begin{rem}
\label{rem:regularity of coefficients-helmholtz}The regularity of
the coefficients required for this approach is lower than the regularity
required if CGO solutions are used. Indeed, consider for simplicity
the constraints given by the map $\zeta_{\det}$ and suppose $a=1$
and $\sigma=0$. The CGO approach requires $\epsilon\in\Cl^{1}$ \cite{bal2010inverse},
while with this method we only assume $\epsilon\in L^{\infty}$.

\textcolor{black}{Similarly, the approach based on the Runge approximation property requires $a$ to be Lipschitz continuous, in addition to \eqref{eq:regularity assumption-helmh-multi} \cite{bal2011reconstruction}. Therefore, higher regularity assumptions are needed in the cases when $\order=0,1$.}
\end{rem}
We now apply Theorem~\ref{thm:quantitative bounds-helmholtz-zeta}
to the case $\zeta=\zeta_{\det}$. The construction of $(\zeta_{\det},C)$-complete
sets of measurements depends on the dimension, since the validity
of \eqref{eq:assumption k=00003D0} for $\zeta_{\det}^{2}$ and $\zeta_{\det}^{3}$
depends on the dimension.
\begin{cor}
\label{cor:det-complete}Assume that \eqref{eq:ellipticity_a-epsi-multi},
\eqref{eq:regularity assumption-helmh-multi} and either \eqref{eq:sigma equal 0-multi}
or \eqref{eq:sigma bounds below-multi} hold for $\order=1$. 

If $d=2$, $\Omega$ is convex and $\Omega'\Subset\Omega$ then there
exist $C>0$ and $n\in\N$ depending on $\Omega$, $\Omega'$, $\maxa$,
$\alpha$, $\left|\A\right|$, $M$ and $\left\Vert a\right\Vert _{\Cl^{0,\alpha}(\overline{\Omega};\R^{2\times2})}$
such that
\[
K^{(n)}\times\{1,x_{1},x_{2}\}
\]
is a $(\zeta_{\det},C)$-complete set of measurements in $\Omega'$.

If $d=3$ and $\hat{a}\in\R^{3\times3}$ \textcolor{black}{is a constant tensor satisfying} \eqref{eq:ellipticity_a-multi}
then there exist $\delta,C>0$ and $n\in\N$ depending on $\Omega$,
 $\maxa$, $\alpha$, $\left|\A\right|$, $M$ and $\left\Vert a\right\Vert _{\Cl^{0,\alpha}(\overline{\Omega};\R^{3\times3})}$
such that if $\left\Vert a-\hat{a}\right\Vert _{\Cl^{0,\alpha}(\overline{\Omega};\R^{3\times3})}\le\delta$
then 
\[
K^{(n)}\times\{1,x_{1},x_{2},x_{3}\}
\]
is a $(\zeta_{\det},C)$-complete set of measurements in $\Omega$.\end{cor}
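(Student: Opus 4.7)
The plan is to apply Theorem~\ref{thm:quantitative bounds-helmholtz-zeta} with $\zeta = \zeta_{\det}$, so the only real task is to verify assumption \eqref{eq:assumption k=00003D0} at $\k = 0$ for the chosen boundary data, producing a constant $C_0 > 0$ that depends only on the a priori quantities listed in the statement. Once this is done, the theorem furnishes $n$ and $C$ with the claimed dependencies.

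\emph{Two-dimensional case.} Since $\phi_1 \equiv 1$ solves the conductivity equation, one has $u_0^1 \equiv 1$ independently of $a$, so $|\zeta_{\det}^1| = 1$ on $\Omega$. Moreover $\nabla u_0^1 = 0$, so expansion of $\zeta_{\det}^3$ along its first column gives $\zeta_{\det}^3 = \det[\nabla u_0^2,\nabla u_0^3] = \zeta_{\det}^2$, and it suffices to bound $|\det[\nabla u_0^2, \nabla u_0^3]|$ from below on $\Omega'$. For boundary data $(x_1, x_2)$ on a convex planar domain, the classical Alessandrini--Magnanini theorem \cite{alessandrinimagnanini1994} ensures that the Jacobian of the map $(u_0^2, u_0^3)$ never vanishes inside $\Omega$, so by continuity it attains a strictly positive minimum on the compact set $\overline{\Omega'}$. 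I would then quantify this lower bound via Schauder $\Cl^{1,\alpha}$ estimates in terms of $\|a\|_{\Cl^{0,\alpha}}$, $\Omega$ and $\Omega'$, to obtain the required $C_0$.

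\emph{Three-dimensional case.} Here I would argue by perturbation from the constant-tensor case. When $a = \hat{a}$, direct computation gives $u_0^1 \equiv 1$ and $u_0^{i+1}(x) = x_i$ for $i=1,2,3$, since $\div(\hat{a}\,\nabla x_i) = \div(\hat{a}\,e_i) = 0$; hence $\nabla u_0^{i+1} = e_i$, so $\zeta_{\det}^2 = 1$, and as in the planar case $\zeta_{\det}^3$ reduces to $\zeta_{\det}^2$ while $\zeta_{\det}^1 = 1$. Continuous dependence of solutions in $\Cl^{1,\alpha}(\overline{\Omega})$ on the coefficient, obtained by applying Schauder estimates to the equation satisfied by the differences $u_0^{i+1} - x_i$, then provides $\delta > 0$ such that $\|a - \hat{a}\|_{\Cl^{0,\alpha}} \le \delta$ implies $|\nabla u_0^{i+1} - e_i| \le \tfrac{1}{2}$ uniformly on $\overline{\Omega}$. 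This yields $|\zeta_{\det}^2|, |\zeta_{\det}^3| \ge C_0$ on the whole of $\Omega$, so that one may take $\Omega' = \Omega$.

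The main subtlety is the three-dimensional case: it is known that solutions of divergence-form elliptic equations in $\R^3$ may possess critical points even for smooth coefficients, so no unconditional analogue of Alessandrini--Magnanini is available, and the smallness assumption on $a - \hat{a}$ cannot simply be removed. Consequently, the bulk of the technical work lies in extracting $\delta$ and $C_0$ uniformly from the stated a priori bounds via Schauder estimates for the differences $u_0^{i+1} - x_i$; the two-dimensional claim, by contrast, reduces almost immediately to a classical result.
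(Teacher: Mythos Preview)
Your three-dimensional argument coincides with the paper's: set $a=\hat a$, compute $u_0^i=x_{i-1}$, get $C_0=1$, then perturb using Schauder estimates on $u_0^i-x_{i-1}$.

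In two dimensions the two arguments diverge. You invoke a global result asserting that the Jacobian of the $\sigma$-harmonic map $(u_0^2,u_0^3)$ is non-vanishing on a convex domain, and then propose to quantify. The paper does not cite such a Jacobian theorem; instead it reduces the Jacobian bound to two applications of the \emph{single-solution} critical-point result of Alessandrini--Magnanini (Lemma~\ref{lem:ales-magn} here, in its quantitative form Proposition~\ref{prop:aless-magn}). Concretely, at each $x\in\Omega'$ it first bounds $\beta:=|\nabla u_0^2(x)|$ from below, then forms the specific combination $v=u_0^3-\xi u_0^2$ with $\xi=\beta^{-2}\nabla u_0^3(x)\cdot\nabla u_0^2(x)$, checks that $|\det[\nabla u_0^2,\nabla u_0^3](x)|=\beta|\nabla v(x)|$, and applies the quantitative result again to $v$ (boundary datum $x_2-\xi x_1$, with $|\xi|$ controlled by elliptic regularity). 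Your route is shorter if one is willing to import the Jacobian theorem as a black box; the paper's is more self-contained and makes clear exactly where the convexity enters.

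One point deserves tightening. You write that you would ``quantify this lower bound via Schauder $\Cl^{1,\alpha}$ estimates''. Schauder estimates by themselves give upper bounds on $\nabla u$, not lower bounds on the Jacobian; the quantification has to come from a compactness argument in which Schauder estimates supply the precompactness (this is precisely what Proposition~\ref{prop:aless-magn} does, and the same scheme would work for your Jacobian version). As written, the mechanism is not named, and a reader might think a direct estimate is intended; it is worth making the compactness step explicit.
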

\begin{rem}
In 2D, it is possible to consider non-convex domains, provided that
the boundary conditions are chosen in accordance to Lemma~\ref{lem:ales-magn}
\cite{bauman2001univalent,alberti2013multiple}.
\end{rem}

\begin{rem}
In order to satisfy the constraints corresponding to $\zeta_{\det}^{1}$,
by the strong maximum principle it is enough to choose $\phi_{1}\ge C_{0}>0$.
As far as \eqref{eq:assumption k=00003D0} for $\zeta_{\det}^{3}$
is concerned, it is sufficient to set $\phi_{2}=x_{1}\phi_{1}$ and
$\phi_{3}=x_{2}\phi_{1}$ \cite{alberti2013multiple}.
\end{rem}

\begin{rem}
\label{rem:generic boundary}The difference between the two and three
dimensional case is due to the presence of critical points in the
case $\k=0$ in 3D \cite{briane-milton-nesi-2004,bal2013cauchy,capdeboscq-2015}.
In order to satisfy \eqref{eq:assumption k=00003D0} in 3D we assume
that $a$ is close to a constant matrix. This assumption can be removed
in some situations by using a different approach in $\k=0$ \cite{bal-courdurier-2013}
or by choosing generic boundary conditions \cite{alberti-genericity}: in
these cases, the a priori estimates on $C$ and $n$ are lost. If
the constraints do not involve gradient fields, e.g. $\zeta=\zeta_{\det}^{1}$,
then there is no need for this assumption.
\end{rem}

\subsection{Maxwell's equations}

Given a smooth bounded domain $\Omega\subseteq\R^{3}$ with a simply
connected boundary $\bo$, in this subsection we consider Maxwell's
equations
\begin{equation}
\left\{ \begin{array}{l}
\curl\Ei=\ii\k\mu\Hi\qquad\text{in \ensuremath{\Omega},}\\
\curl\Hi=-\ii(\k\epsi+\ii\sigma)\Ei\qquad\text{in \ensuremath{\Omega},}\\
\Ei\times\nu=\phi_{i}\times\nu\qquad\text{on \ensuremath{\partial\Omega},}
\end{array}\right.\label{eq:combined i-maxwell}
\end{equation}
with $\mu,\epsi,\sigma\in L^{\infty}(\Omega;\R^{3\times3})$ and $\phi_{i}$
satisfying\begin{subequations}\label{eq:assumption_coefficients-maxwell-multi}
\begin{align}
 & \maxa^{-1}\left|\xi\right|^{2}\le\xi\cdot\mu\xi,\quad\maxa^{-1}\left|\xi\right|^{2}\le\xi\cdot\epsilon\xi,\quad\maxa^{-1}\left|\xi\right|^{2}\le\xi\cdot\sigma\xi,\qquad\xi\in\R^{3},\label{eq:assumption_ell-maxwell-multi}\\
 & \left\Vert (\mu,\epsilon,\sigma)\right\Vert _{L^{\infty}(\Omega;\R^{3\times3})^{3}}\le\maxa,\;\mu=\mu^{T},\;\epsi=\epsi^{T},\;\sigma=\sigma^{T},\;\mu,\epsilon,\sigma\in W^{\order+1,p}(\Omega),\label{eq:assumption_bounds-maxwell-multi}\\
 & \curl\phi_{i}\cdot\nu=0\text{ on \ensuremath{\bo}}\text{ and }\phi_{i}\in W^{\order+1,p}(\Omega;\C^{3})\label{eq:assumption_phi-maxwell-multi}
\end{align}
\end{subequations}for some $\maxa>0$, $\order\in\N$ and $p>3$.
The electromagnetic fields $E_{\k}^{i}$ and $H_{\k}^{i}$ satisfy
\begin{align*}
 & E_{\k}^{i}\in\Hcurl:=\{u\in\ld:\curl u\in\ld\},\\
 & H_{\k}^{i}\in\Hmu:=\{v\in\Hcurl:\div(\mu v)=0\text{ in \ensuremath{\Omega}, }\mu v\cdot\nu=0\text{ on }\bo\}.
\end{align*}
The matrix $\varepsilon$ represents the electric permittivity, $\sigma$
is the electric conductivity and $\mu$ stands for the  magnetic permeability.
Note that $(E_{\k}^{i},H_{\k}^{i})\in\Cl^{\order}(\overline{\Omega};\C^{6})$
by Proposition~\ref{prop:regularity-maxwell}.
\begin{defn}
Given a finite set $K\subseteq\A$ and $\phi_{1},\dots,\phi_{b}\in W^{\order+1,p}(\Omega;\C^{3})$
satisfying \eqref{eq:assumption_phi-maxwell-multi}, we say that $K\times\{\phi_{1},\dots,\phi_{b}\}$
is a \emph{set of measurements}.
\end{defn}
As before, we are interested in a particular class of sets of measurements,
namely those whose corresponding solutions $(E_{\k}^{i},H_{\k}^{i})$
to \eqref{eq:combined i-maxwell} and their derivatives up to the
$\kappa$-th order satisfy $r$ non-zero constraints inside the domain.
These are described by a map $\zeta$, which we now introduce. For
$b,r\in\N^{*}$ let\begin{subequations}\label{eq:assumptions map zed-maxwell}
\begin{align}
 & \zeta=(\zeta^{1},\dots,\zeta^{r})\colon\Cl^{\order}(\overline{\Omega};\C^{6}){}^{b}\longrightarrow\Cl(\overline{\Omega};\C)^{r}\;\text{ be holomorphic, such that}\label{eq:definition of zeta-maxwell}\\
 & \bigl\Vert\zeta((u^{i},v^{i})_{i})\bigr\Vert_{\Cl(\overline{\Omega};\C)^{r}}\le c_{\zeta}(1+\bigl\Vert((u^{i},v^{i})_{i})\bigr\Vert_{\Cl^{\order}(\overline{\Omega};\C^{6}){}^{b}}^{s}),\label{eq:assumptions map zed-maxwell-a}\\
 & \bigl\Vert D\zeta_{((u^{i},v^{i})_{i})}\bigr\Vert_{\mathcal{B}(\Cl^{\order}(\overline{\Omega};\C^{6}){}^{b},\Cl(\overline{\Omega};\C)^{r})}\!\le\! c_{\zeta}(1+\bigl\Vert((u^{i},v^{i})_{i})\bigr\Vert_{\Cl^{\order}(\overline{\Omega};\C^{6}){}^{b}}^{s})\label{eq:assumptions map zed-maxwell-b}
\end{align}
\end{subequations}for some $c_{\zeta}>0$ and $s\in\N^{*}$. We shall
use the notation $C_{\zeta}=(c_{\zeta},s,r,\order,p)$\index{C@$C_\zeta$ (Maxwell)}.

We now consider one example of map $\zeta$. For other examples, see
\cite{albertigsII}.
\begin{example}
\label{exa:zeta_det_maxwell}Take $b=3$, $r=1$, $\order=0$ and
let $\zeta_{\det}^{M}$ be defined by
\[
\zeta_{\det}^{M}((u_{1},v_{1}),(u_{2},v_{2}),(u_{3},v_{3}))=\det\begin{bmatrix}u_{1} & u_{2} & u_{3}\end{bmatrix},\qquad(u_{i},v_{i})\in\Cl(\overline{\Omega};\C^{6}).
\]
The map $\zeta_{\det}^{M}$ is multilinear and bounded, whence holomorphic
by Lemma~\ref{lem:analytic functions}. Assumptions \eqref{eq:assumptions map zed-maxwell-a}
and \eqref{eq:assumptions map zed-maxwell-b} are obviously verified.
In this case, the condition characterising $(\zeta_{\det}^{M},C)$-complete
sets of measurements is $\bigl|\det\begin{bmatrix}E_{\k}^{1} & E_{\k}^{2} & E_{\k}^{3}\end{bmatrix}(x)\bigr|\ge\p$.
In other words, this constraints signals the availability, in every
point, of three independent electric fields and, in particular, of
one non-vanishing electric field. 
\end{example}
We now give the precise definition of $(\zeta,C)$-complete sets of
measurements for Maxwell's equations. The only difference with the
Helmholtz equation is that here, for simplicity, we require the constraints
to hold in the whole domain $\Omega$.
\begin{defn}
\label{def:zeta-complete-maxwell}Let $b,r\in\N^{*}$ be two positive
integers, $C>0$ and let $\zeta$ be as in \eqref{eq:assumptions map zed-maxwell}.
A set of measurements $K\times\{\phi_{1},\dots,\phi_{b}\}$ is \emph{$(\zeta,\p)$-complete}
if there exists an open cover of $\Omega$, $\Omega=\cup_{\k\in K}\Omega{}_{\k}$,
such that for any $\k\in K$
\begin{equation}
\bigl|\zeta^{j}\bigl((E_{\k}^{1},H_{\k}^{1}),\dots,(E_{\k}^{b},H_{\k}^{b})\bigr)(x)\bigr|\ge\p,\qquad j=1,\dots,r,\; x\in\Omega{}_{\k}.\label{eq:zeta-complete-maxwell}
\end{equation}

\end{defn}
Let $K^{(n)}$ be as in \eqref{eq:K^(n)}. The main result of this
subsection reads as follows.
\begin{thm}
\label{thm:quantitative bounds-complex maxwell-zeta}Assume that \eqref{eq:assumption_coefficients-maxwell-multi}
holds. Let $\so\in W^{\order,p}(\Omega;\R^{3\times3})$ satisfy \eqref{eq:assumption_ell-maxwell-multi}.
Let $\zeta$ be as in \eqref{eq:assumptions map zed-maxwell} and \textcolor{black}{assume that there exist
$\phi_{1},\dots,\phi_{b}\in W^{\order+1,p}(\Omega;\C^{3})$ satisfying
\eqref{eq:assumption_phi-maxwell-multi} and $C_{0}>0$ such that
\begin{equation}
\bigl|\zeta^{j}\bigl((\hat{E}_{0}^{1},\hat{H}_{0}^{1}),\dots,(\hat{E}_{0}^{b},\hat{H}_{0}^{b})\bigr)(x)\bigr|\ge C_{0},\qquad x\in\Omega,\, j=1,\dots,r,\label{eq:assumption k 0}
\end{equation}
}where $(\hat{E}{}_{0}^{i},\hat{H}_{0}^{i})\in\Hcurl\times\Hmu$
is the solution to \eqref{eq:combined i-maxwell} with $\so$ in lieu
of \textup{$\sigma$} and $\k=0$, namely
\begin{equation}
\left\{ \begin{array}{l}
\curl\hat{E}{}_{0}^{i}=0\qquad\text{in \ensuremath{\Omega},}\\
\div(\so\hat{E}{}_{0}^{i})=0\qquad\text{in \ensuremath{\Omega},}\\
\hat{E}{}_{0}^{i}\times\nu=\phi_{i}\times\nu\qquad\text{on \ensuremath{\partial\Omega},}
\end{array}\right.\qquad\left\{ \begin{array}{l}
\curl\hat{H}_{0}^{i}=\so\hat{E}{}_{0}^{i}\qquad\text{in \ensuremath{\Omega},}\\
\div(\mu\hat{H}_{0}^{i})=0\qquad\text{in \ensuremath{\Omega},}\\
\mu\hat{H}_{0}^{i}\cdot\nu=0\qquad\text{on \ensuremath{\partial\Omega}.}
\end{array}\right.\label{eq: k  0}
\end{equation}
There exist $\delta,C>0$ and $n\in\N$ depending on $\Omega$,  $\maxa$,
$\left|\A\right|$, $M$, $C_{\zeta}$, $\left\Vert \phi_{i}\right\Vert _{W^{\order+1,p}(\overline{\Omega};\C^{3})}$,
$\left\Vert (\epsi,\sigma,\mu)\right\Vert _{W^{\order+1,p}(\overline{\Omega};\R^{3\times3})}$
and $C_{0}$ such that if $\left\Vert \sigma-\so\right\Vert _{W^{\order+1,p}(\overline{\Omega};\R^{3\times3})}\le\delta$
then 
\[
K^{(n)}\times\{\phi_{1},\dots,\phi_{b}\}
\]
is a $(\zeta,C)$-complete set of measurements.
\end{thm}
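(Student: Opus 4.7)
The plan is to mimic the strategy behind Theorem~\ref{thm:quantitative bounds-helmholtz-zeta} for the Helmholtz equation. Three steps are required: (i) show that $\omega\mapsto(E_\omega^i,H_\omega^i)$ extends to a holomorphic map into $\Cl^\kappa(\overline{\Omega};\C^6)$ on a complex neighbourhood $D\subseteq\C$ of the real segment $\A\cup\{0\}$, with a uniform bound depending only on the a priori data; (ii) transfer the nondegeneracy \eqref{eq:assumption k 0} of the $\hat\sigma$-problem at $\omega=0$ to the true problem with coefficient $\sigma$; (iii) apply Lemma~\ref{lem:momm-ellipse} pointwise and assemble the cover.

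For (i), well-posedness of \eqref{eq:combined i-maxwell} in $\Hcurl\times\Hmu$ holds for complex $\omega$ in a strip around the real axis once one inverts the standard augmented div-curl system, and the uniform bound \eqref{eq:assumption_ell-maxwell-multi} makes the strip uniform. Regularity up to $\Cl^\kappa$ is provided by Proposition~\ref{prop:regularity-maxwell}, and the resolvent identity gives holomorphy of $\omega\mapsto(E_\omega^i,H_\omega^i)$. The delicate point is the extension across $\omega=0$, where \eqref{eq:combined i-maxwell} degenerates and must be replaced by the static system
\[
\left\{\begin{array}{l}\curl E_0^i=0,\\ \div(\sigma E_0^i)=0,\\ E_0^i\times\nu=\phi_i\times\nu,\end{array}\right.\qquad
\left\{\begin{array}{l}\curl H_0^i=\sigma E_0^i,\\ \div(\mu H_0^i)=0,\\ \mu H_0^i\cdot\nu=0,\end{array}\right.
\]
which is well-posed under \eqref{eq:assumption_coefficients-maxwell-multi}. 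A Neumann-series argument around $\omega=0$, writing the full system as a small perturbation of this static limit, gives holomorphy through the origin.

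For (ii), standard elliptic estimates applied to the static system yield
\[
\|(E_0^i,H_0^i)-(\hat E_0^i,\hat H_0^i)\|_{\Cl^\kappa(\overline{\Omega};\C^6)}\le c\,\|\sigma-\hat\sigma\|_{W^{\kappa+1,p}(\Omega;\R^{3\times3})}.
\]
Combining this with \eqref{eq:assumptions map zed-maxwell-b} and the a priori bound of step (i), one deduces that if $\delta$ is chosen small enough (depending on $C_0$, $c_\zeta$ and the $\Cl^\kappa$-bound) then
\[
\bigl|\zeta^j\bigl((E_0^1,H_0^1),\dots,(E_0^b,H_0^b)\bigr)(x)\bigr|\ge C_0/2,\qquad x\in\Omega,\ j=1,\dots,r.
\]

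For (iii), for each $x\in\overline{\Omega}$ and $j\in\{1,\dots,r\}$ the scalar map
\[
f_x^j(\omega):=\zeta^j\bigl((E_\omega^1,H_\omega^1),\dots,(E_\omega^b,H_\omega^b)\bigr)(x)
\]
is holomorphic on $D$, uniformly bounded by the a priori data via \eqref{eq:assumptions map zed-maxwell-a} and step (i), and satisfies $|f_x^j(0)|\ge C_0/2$ by step (ii). Lemma~\ref{lem:momm-ellipse} then yields $C>0$ and $n\in\N$, depending only on the allowed parameters, such that for every $x$ and $j$ some $\omega\in K^{(n)}$ gives $|f_x^j(\omega)|\ge 2C$. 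Equicontinuity of $\{f_x^j\}_x$, inherited from the $\Cl^\kappa$-bound, propagates this lower bound to an open neighbourhood of $x$ at the same frequency, so setting $\Omega_\omega:=\{x\in\Omega:|f_x^j(\omega)|>C\text{ for all }j\}$ produces the required open cover $\Omega=\bigcup_{\omega\in K^{(n)}}\Omega_\omega$. The hard part is step (i): rigorously establishing the holomorphic continuation of the Maxwell family through $\omega=0$, where the dynamical system degenerates into an electrostatic/magnetostatic one and the div-free augmentation has to be inserted by hand; the remaining steps are essentially direct transcriptions of the Helmholtz argument.
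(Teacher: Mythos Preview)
Your outline matches the paper's route exactly: holomorphicity of $\omega\mapsto(E_\omega^i,H_\omega^i)$ on a strip containing $\A\cup\{0\}$ (Proposition~\ref{pro: maxwell holomorphic}), a continuity argument to pass from $\hat\sigma$ to $\sigma$ at $\omega=0$, and then the pointwise unique-continuation lemma. Two technical points in step~(iii) need to be tightened, however.

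First, Lemma~\ref{lem:momm-ellipse} does not deliver a frequency in the discrete grid $K^{(n)}$: it produces, for each $x$, some $\omega_x$ in the \emph{continuous} interval $(K_{\min},K_{\max})$ with $|g_x(\omega_x)|\ge C$. To land on $K^{(n)}$ you need a uniform bound on $\partial_\omega f_x^j$ (the analogue of Lemma~\ref{lem:bounds on u}, part~2b), which gives an interval $[\omega_x-Z,\omega_x+Z]$ on which the lower bound persists; $n$ is then chosen so that $K^{(n)}$ is $Z$-dense in $\A$. Your equicontinuity-in-$x$ argument handles the spatial spreading correctly but does not perform this frequency discretisation. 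Second, as phrased, the frequency you obtain may depend on $j$, and then the sets $\Omega_\omega=\{x:\min_j|f_x^j(\omega)|>C\}$ need not cover $\Omega$. The paper fixes this by applying Lemma~\ref{lem:momm-ellipse} not to each $f_x^j$ separately but to the single holomorphic function $g_x(\omega)=\prod_{j=1}^r f_x^j(\omega)$, which satisfies $|g_x(0)|\ge(C_0/2)^r$, and then uses the uniform upper bound on each factor to extract a simultaneous lower bound on all $f_x^j(\omega_x)$ at one and the same $\omega_x$.
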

We now discuss assumption \eqref{eq:assumption k 0}, the dependence
of the construction of the illuminations on the electromagnetic parameters
and the regularity assumption on the coefficients (see Remarks~\ref{rem:dependence-helmholtz}
and \ref{rem:regularity of coefficients-helmholtz}).
\begin{rem}
\label{rem:dependence}Suppose that we are in the simpler case $\so=\sigma$.
Note that \eqref{eq: k  0} does not depend on $\epsilon$, so that
the construction of  $\phi_{1},\dots,\phi_{b}$ is always independent
of $\epsilon$ but may depend on $\sigma$ and $\mu$. However, in
the cases where the maps $\zeta^{j}$ involve only the electric field
$E$, it depends on $\sigma$, and not on $\epsilon$ and $\mu$ (see
Corollary~\ref{cor:zeta-complete}).

A typical application of the theorem is in the case where $\sigma$
is a small perturbation of a known constant tensor $\so$. Then, the
construction of $\phi_{1},\dots,\phi_{b}$ is independent of $\sigma$.
A similar argument would work if $\mu$ were a small perturbation of
a constant tensor $\hat{\mu}$. We have decided to omit it for simplicity,
since in the applications we have in mind the maps $\zeta^{j}$ do
not depend on the magnetic field $H$.
\end{rem}

\begin{rem}
\label{rem:regularity of coefficients}The regularity of the coefficients
required for this approach is much lower than the regularity required
if CGO solutions are used. Indeed, if the constraints depend on the
derivatives up to the $\kappa$-th order, with this approach we require
the parameters to be in $W^{\kappa+1,p}$, while with CGO we need
$W^{\kappa+3,p}$ \cite{chen-yang-2013}.
\end{rem}
In the case where the conditions given by the map $\zeta$ are independent
of the magnetic field $H$, Theorem~\ref{thm:quantitative bounds-complex maxwell-zeta}
can be rewritten in the following form.
\begin{cor}
\label{cor:zeta-complete}Assume that \eqref{eq:assumption_coefficients-maxwell-multi}
holds. Let $\so\in W^{\order,p}(\Omega;\R^{3\times3})$ satisfy \eqref{eq:assumption_ell-maxwell-multi}
and $\zeta$ be as in \eqref{eq:assumptions map zed-maxwell} and
independent of $H$. Take $\psi_{1},\dots,\psi_{b}\in W^{\kappa+2,p}(\Omega;\C)$.
Suppose
\begin{equation}
\bigl|\zeta^{j}\bigl(\nabla w^{1},\dots,\nabla w^{b}\bigr)(x)\bigr|\ge C_{0},\qquad x\in\Omega,\, j=1,\dots,r\label{eq:assumption k 0-cor}
\end{equation}
for some $C_{0}>0$, where $w^{i}\in\Hone$ is the solution to
\[
\left\{ \begin{array}{l}
\div(\so\nabla w^{i})=0\qquad\text{in \ensuremath{\Omega},}\\
w^{i}=\psi_{i}\qquad\text{on \ensuremath{\partial\Omega}.}
\end{array}\right.
\]
There exist $\delta,C>0$ and $n\in\N$ depending on $\Omega$,  $\maxa$,
$\left|\A\right|$, $M$, $C_{\zeta}$, $\left\Vert \psi_{i}\right\Vert _{W^{\order+2,p}(\overline{\Omega};\C^{3})}$,
$\left\Vert (\epsi,\sigma,\mu)\right\Vert _{W^{\order+1,p}(\overline{\Omega};\R^{3\times3})}$
and $C_{0}$ such that if $\left\Vert \sigma-\so\right\Vert _{W^{\order+1,p}(\overline{\Omega};\R^{3\times3})}\le\delta$
then 
\[
K^{(n)}\times\{\nabla\psi_{1},\dots,\nabla\psi_{b}\}
\]
is a $(\zeta,C)$-complete set of measurements.
\end{cor}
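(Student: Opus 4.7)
The plan is to deduce Corollary~\ref{cor:zeta-complete} directly from Theorem~\ref{thm:quantitative bounds-complex maxwell-zeta} by applying the latter with the Maxwell boundary data $\phi_i := \nabla \psi_i$. First I would verify that this choice satisfies the hypotheses of the theorem. Since $\psi_i \in W^{\order+2,p}(\Omega;\C)$, we have $\phi_i = \nabla \psi_i \in W^{\order+1,p}(\Omega;\C^3)$; moreover $\curl \phi_i = 0$ identically in $\Omega$, so in particular $\curl \phi_i \cdot \nu = 0$ on $\bo$. Hence \eqref{eq:assumption_phi-maxwell-multi} is satisfied, and $\|\phi_i\|_{W^{\order+1,p}}$ is controlled by $\|\psi_i\|_{W^{\order+2,p}}$.

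The central step is to identify the $\k=0$ electric field from \eqref{eq: k  0} with $\nabla w^i$. Let $(\hat E_0^i, \hat H_0^i)$ denote the solution to \eqref{eq: k  0} corresponding to $\phi_i = \nabla \psi_i$. Since $\curl \hat E_0^i = 0$ in $\Omega$, and the simple connectedness of $\bo$ ensures that $\Omega$ has trivial first de Rham cohomology, one can write $\hat E_0^i = \nabla v^i$ for some $v^i \in H^1(\Omega;\C)$, uniquely defined up to an additive constant. The tangential boundary condition $\hat E_0^i \times \nu = \nabla \psi_i \times \nu$ on $\bo$ then gives $\nabla(v^i - \psi_i)\times \nu = 0$ on the connected surface $\bo$, so $v^i - \psi_i$ is constant on $\bo$; adjusting the additive constant, we may take $v^i = \psi_i$ on $\bo$. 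Combined with $\div(\so \nabla v^i) = 0$ in $\Omega$, uniqueness for the conductivity equation yields $v^i = w^i$, and therefore $\hat E_0^i = \nabla w^i$.

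Because $\zeta$ is independent of the magnetic components, substituting $\hat E_0^i = \nabla w^i$ into the assumption \eqref{eq:assumption k 0} of Theorem~\ref{thm:quantitative bounds-complex maxwell-zeta} reduces it precisely to the assumed bound \eqref{eq:assumption k 0-cor}. An application of Theorem~\ref{thm:quantitative bounds-complex maxwell-zeta} then furnishes $\delta$, $C$ and $n$ with exactly the stated dependencies, after absorbing $\|\nabla \psi_i\|_{W^{\order+1,p}}$ into $\|\psi_i\|_{W^{\order+2,p}}$. The only delicate point in the reduction is the identification $\hat E_0^i = \nabla w^i$, which hinges on the topological assumption on $\bo$; were it dropped, one would have to account for a finite-dimensional space of harmonic tangential fields and a direct pointwise identification would fail.
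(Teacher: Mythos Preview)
Your proposal is correct and follows exactly the route the paper intends: the corollary is stated as an immediate consequence of Theorem~\ref{thm:quantitative bounds-complex maxwell-zeta} with $\phi_i=\nabla\psi_i$, and the paper does not spell out a separate proof. Your identification $\hat E_0^i=\nabla w^i$ via the curl-free condition, the simply connected boundary assumption, and uniqueness for the conductivity problem is precisely the argument that makes the reduction work, and you have supplied more detail than the paper itself gives.
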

In other words, if the required constraints do not depend on $H$,
then the problem of finding $\zeta$-complete sets is reduced to satisfying
the same conditions for the gradients of solutions to the conductivity
equation, as with the Helmholtz equation.

\section{\label{sec:Proof-of-the}Non-zero constraints in PDEs}

The results stated in Section~\ref{sec:Main-results} are proven
here. In particular, some preliminary lemmata on holomorphic functions
are discussed in $\S$~\ref{sub:Preliminary-lemmata}, and the proofs
of Theorem~\ref{thm:quantitative bounds-helmholtz-zeta}, Corollary~\ref{cor:det-complete}
and Theorem~\ref{thm:quantitative bounds-complex maxwell-zeta} are
given in $\S$~\ref{sub:Proof-of-Theorem-1}, $\S$~\ref{sub:Proof-of-Corollary}
and $\S$~\ref{sub:Proof-of-Theorem-2}, respectively.

\subsection{\label{sub:Preliminary-lemmata}Holomorphic functions}

Holomorphic functions in a Banach space setting were studied in \cite{taylor37}.
Let $E$ and $E'$ be complex Banach spaces, $D\subseteq E$ be an
open set and take $f\colon D\to E'$. We say that $f$ is holomorphic
if it is continuous and if
\[
\lim_{\tau\to0}\frac{f(x_{0}+\tau y)-f(x_{0})}{\tau}
\]
exists in $E'$ for all $x_{0}\in D$ and $y\in E$. This notion extends
the classical notion of holomorphicity for functions of complex variable.

This lemma summarises some of the basic properties of holomorphic
functions. 
\begin{lem}
\label{lem:analytic functions}Let $E_{1},\dots,E_{r}$, $E$ and
$E'$ be complex Banach spaces and $D\subseteq E$ be an open set.
\begin{enumerate}
\item If $f\colon E_{1}\times\dots\times E_{r}\to E'$ is multilinear and
bounded then $f$ is holomorphic.
\item If $f\colon D\to E_{1}$ and $g\colon E_{1}\to E'$ are holomorphic
then $g\circ f\colon D\to E'$ is holomorphic.
\item Take $f=(f^{1},\dots f^{r})\colon D\to E_{1}\times\dots\times E_{r}$.
Then $f$ is holomorphic if and only if $f^{j}$ is holomorphic for
every $j=1,\dots,r$.
\end{enumerate}
\end{lem}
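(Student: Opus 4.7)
My plan is to handle the three items in the order they are stated, since each is self-contained, with the composition part (2) requiring the most care.

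For item (1), I would simply expand by multilinearity. Writing $x=(x_{1},\dots,x_{r})\in E_{1}\times\dots\times E_{r}$ and $y=(y_{1},\dots,y_{r})\in E_{1}\times\dots\times E_{r}$, the difference $f(x+\tau y)-f(x)$ becomes a polynomial in $\tau\in\C$ of degree $r$ with vanishing constant term; expanding explicitly one obtains
\[
f(x+\tau y)-f(x)=\tau\sum_{i=1}^{r}f(x_{1},\dots,x_{i-1},y_{i},x_{i+1},\dots,x_{r})+O(\tau^{2}),
\]
so the incremental ratio has a limit in $E'$ as $\tau\to 0$. Continuity of $f$ follows from the standard bound $\Vert f(x)\Vert\le \Vert f\Vert\prod_{i}\Vert x_{i}\Vert$ together with the telescoping identity that controls $f(x)-f(\tilde x)$ in terms of $\Vert x_{i}-\tilde x_{i}\Vert$. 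This takes care of the example maps like $\zeta_{\det}^{M}$ used later.

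For item (2), fix $x_{0}\in D$ and $y\in E$. Since $f$ is holomorphic, the directional derivative $w:=\lim_{\tau\to 0}\tau^{-1}(f(x_{0}+\tau y)-f(x_{0}))$ exists in $E_{1}$, so we can write $f(x_{0}+\tau y)=f(x_{0})+\tau w+o(\tau)$ as $\tau\to 0$. The key point is then to show that $g$ admits a bounded linear Fréchet derivative at $f(x_{0})$, so that the classical chain rule
\[
\lim_{\tau\to 0}\frac{g(f(x_{0}+\tau y))-g(f(x_{0}))}{\tau}=Dg_{f(x_{0})}(w)
\]
follows. This is the main obstacle, because the definition adopted here only requires continuity plus existence of directional derivatives (Gâteaux holomorphy), not Fréchet holomorphy. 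I would invoke the theorem of Taylor referenced at the beginning of $\S$~\ref{sub:Preliminary-lemmata}, according to which a continuous Gâteaux-holomorphic map on an open subset of a complex Banach space is automatically Fréchet holomorphic; equivalently, for any $\ell\in (E')^{*}$ the scalar function $\tau\mapsto\ell(g(y_{0}+\tau z))$ is holomorphic on a disc around $0$, and a uniform boundedness/Cauchy-integral argument upgrades this to the norm-topology expansion of $g$ around $y_{0}=f(x_{0})$. Once Fréchet differentiability of $g$ is in hand, the identity above is immediate, and continuity of $g\circ f$ is trivial.

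For item (3), the product space $E_{1}\times\dots\times E_{r}$ is equipped with any equivalent product norm, and both continuity and the existence of the directional limit
\[
\lim_{\tau\to 0}\frac{f(x_{0}+\tau y)-f(x_{0})}{\tau}
\]
reduce coordinatewise, since convergence in the product topology is equivalent to convergence of each component. Hence $f$ is holomorphic if and only if each $f^{j}$ is, which is the final claim.
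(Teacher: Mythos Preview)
The paper does not actually prove this lemma: it is stated as a summary of standard facts about Banach-space holomorphic maps, with a pointer to Taylor's foundational paper \cite{taylor37}, and the text moves on immediately to the quantitative unique continuation lemma. So there is nothing to compare against line by line.

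That said, your plan is sound and would make a perfectly acceptable proof. Item~(1) and item~(3) are routine, and you handle them correctly. For item~(2) you put your finger on the only genuine subtlety: the definition adopted in the paper is G\^ateaux-type (continuity plus existence of all complex directional derivatives), and the chain rule does not follow from G\^ateaux differentiability alone. Your recourse to the theorem of Taylor that continuous G\^ateaux-holomorphic maps on open subsets of complex Banach spaces are automatically Fr\'echet holomorphic (indeed locally given by a convergent homogeneous polynomial expansion) is exactly the right move, and is precisely why the paper cites \cite{taylor37} at the start of \S\ref{sub:Preliminary-lemmata}. Once $g$ is Fr\'echet differentiable at $f(x_0)$, your chain-rule computation goes through verbatim.
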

The following result is a quantitative version of the unique continuation
property for holomorphic functions of one complex variable.
\begin{lem}
\label{lem:momm}Take $C_{0},D>0$, $\theta\in(0,1)$ and $r\in(0,\theta]$.
Let $g$ be a holomorphic function in \textcolor{black}{$B(0,1)\subseteq\C$} such that $\left|g(0)\right|\ge C_{0}$
and $\sup_{B(0,1)}\left|g\right|\le D$. There exists $\k\in[r,1)$
such that
\[
\left|g(\k)\right|\ge C
\]
for some constant $C>0$ depending on $\theta$, $C_{0}$ and $D$
only.\end{lem}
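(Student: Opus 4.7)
The plan is to separate the zeros of $g$ from its non-vanishing part via a Blaschke-type factorization and then locate $\k$ via Cartan's lemma on small values of polynomials. Fix two intermediate radii depending only on $\theta$, for instance $R = (1+\theta)/2$ and $R' = (3+\theta)/4$, so that $\theta < R < R' < 1$. Since $r \le \theta$, the segment $[\theta, R]$ is contained in $[r,1)$ and has length $(1-\theta)/2$ independent of $r$; this is where I will search for $\k$.

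By Jensen's formula applied to $g$ on $B(0,R')$, using $|g(0)| \ge C_0$ and $\sup_{B(0,1)}|g| \le D$, the number $N$ of zeros $z_1,\dots,z_N$ of $g$ in $B(0,R')$ is bounded by $\log(D/C_0)/\log(1/R')$, a quantity depending only on $\theta$, $C_0$, $D$. Form the Blaschke product adapted to $B(0,R')$,
\[
B(z) = \prod_{j=1}^{N} \frac{R'(z - z_j)}{R'^2 - \bar{z}_j z},
\]
which satisfies $|B| \le 1$ on $B(0,R')$ with $|B| = 1$ on $|z| = R'$. Then $\tilde g := g/B$ is holomorphic and non-vanishing on $B(0,R')$, and since $|\tilde g| = |g| \le D$ on $|z| = R'$, the maximum modulus principle yields $|\tilde g| \le D$ on $B(0,R')$; moreover $|\tilde g(0)| \ge |g(0)| \ge C_0$ because $|B(0)| \le 1$.

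Because $\tilde g$ has no zeros, $v := \log(D/|\tilde g|)$ is a non-negative harmonic function on $B(0,R')$ with $v(0) \le \log(D/C_0)$. Harnack's inequality then gives $v(z) \le \tfrac{R'+R}{R'-R}\log(D/C_0)$ for $|z| \le R$, hence a uniform bound $|\tilde g(z)| \ge C_2 := D(C_0/D)^{(R'+R)/(R'-R)}$ on $B(0,R)$, with $C_2$ depending only on $\theta$, $C_0$, $D$. It remains to locate $\k \in [\theta, R]$ where $|B(\k)|$ is not too small: for $|\k| \le R$ one has $|R'^2 - \bar{z}_j \k| \le R'^2 + R'R \le 2$, so $|B(\k)| \ge (R'/2)^N |P(\k)|$, where $P(z) = \prod_{j=1}^N (z - z_j)$ is a monic polynomial of degree $N$.

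Finally, by Cartan's lemma on small values of polynomials, the set $\{z \in \C : |P(z)| < H^N\}$ can be covered by disks whose radii sum to at most $2eH$, so its intersection with the real axis has Lebesgue measure at most $4eH$. Choosing $H = (1-\theta)/(16e)$ makes this measure strictly smaller than $(1-\theta)/2 = |[\theta, R]|$, so there exists $\k \in [\theta, R] \subseteq [r, 1)$ with $|P(\k)| \ge H^N$. Putting everything together yields $|g(\k)| = |B(\k)||\tilde g(\k)| \ge C_2 (R' H/2)^N =: C > 0$, a constant depending only on $\theta$, $C_0$, $D$. The main technical point is matching the size of the Cartan exceptional set against the fixed length of the search interval: this forces $\k$ to be sought in $[\theta, R]$ rather than in the possibly degenerate $[r,\theta]$ (which may collapse when $r = \theta$), and is precisely what makes the final constant $C$ independent of $r$.
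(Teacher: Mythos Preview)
Your proof is correct and takes a genuinely different, constructive route from the paper's argument. The paper reduces to the fixed interval $[\theta,(1+\theta)/2]$ (exactly your $[\theta,R]$) and then argues by contradiction and compactness: along a sequence $(g_n)$ with $\max_{[\theta,(1+\theta)/2]}|g_n|\to 0$, Montel's theorem extracts a holomorphic limit $g_\infty$ which vanishes on the interval, hence $g_\infty\equiv 0$ by the identity theorem, contradicting $|g_\infty(0)|\ge C_0$. This is short but gives no information on $C$; the paper supplies an explicit constant only afterwards in a separate remark by quoting a quantitative unique continuation result of Momm. Your Jensen--Blaschke--Harnack--Cartan argument produces an explicit constant directly, so in spirit it is closer to that remark than to the paper's actual proof of the lemma. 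What you gain is an effective bound; what the paper's proof gains is brevity and the avoidance of any zero-counting machinery.

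One small imprecision worth fixing: to obtain $N\le \log(D/C_0)/\log(1/R')$ you should apply Jensen's formula on a circle of radius strictly between $R'$ and $1$ (or on all of $B(0,1)$ via a limiting argument), not on $B(0,R')$ itself; Jensen on $B(0,R')$ only controls $\sum_j\log(R'/|z_j|)$, which does not bound the number of zeros near $|z|=R'$. With this adjustment the bound on $N$ (and hence on $C$) still depends only on $\theta$, $C_0$, $D$, and the rest of your argument goes through unchanged.
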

\begin{proof}
Since $[\theta,(1+\theta)/2]\subseteq[r,1)$, it is sufficient to
show that there exists $C>0$ depending on $\theta$, $C_{0}$ and
$D$ only such that
\[
\max_{[\theta,(1+\theta)/2]}\left|g\right|\ge C.
\]
By contradiction, suppose that there exists a sequence $(g_{n})_{n}$
of holomorphic functions in $B(0,1)$ such that $\sup_{B(0,1)}\left|g_{n}\right|\le D$,
$\left|g_{n}(0)\right|\ge C_{0}$ and $\max_{[\theta,(1+\theta)/2]}\left|g_{n}\right|\to0$.
Since $\sup_{B(0,1)}\left|g_{n}\right|\le D$, by standard complex
analysis, up to a subsequence $g_{n}\to g_{\infty}$ for some $g_{\infty}$
holomorphic in $B(0,1)$. As $\max_{[\theta,(1+\theta)/2]}\left|g_{n}\right|\to0$,
we obtain $g_{\infty}=0$ on $[\theta,(1+\theta)/2]$, whence $g_{\infty}=0$,
which contradicts $\left|g_{\infty}(0)\right|\ge C_{0}$.\end{proof}
\begin{rem}
\label{rem:momm}Although elementary, the proof of Lemma~\ref{lem:momm}
does not give the dependence of the constant $C$ on the parameters
$\theta$, $C_{0}$ and $D$. 

By \cite{momm-1990} there is a Jordan curve $\Gamma$ in $r<\left|\k\right|<1$
around the origin such that
\[
\log\left|g(\k)/g(0)\right|\ge-\frac{\tilde{C}}{1-r}\biggl(\int_{0}^{1}\left(\frac{\log\sup_{B(0,t)}\left|g/g(0)\right|}{1-t}\right)^{1/2}dt\biggr)^{2},\qquad\k\in\Gamma,
\]
for an absolute constant $\tilde{C}>0$. By the Jordan curve theorem
there exists $\k\in(r,1)$ such that
\[
\log\left|g(\k)/g(0)\right|\ge-\frac{\tilde{C}\log(DC_{0}^{-1})}{1-r}.
\]
Therefore $\left|g(\k)\right|\ge\left|g(0)\right|(DC_{0}^{-1})^{-\frac{\tilde{C}}{1-r}}\ge C_{0}(DC_{0}^{-1})^{-\frac{\tilde{C}}{1-r}}\ge C_{0}(DC_{0}^{-1})^{-\frac{\tilde{C}}{1-\theta}}$,
whence the constant given in Lemma~\ref{lem:momm} is $C=C_{0}(DC_{0}^{-1})^{-\frac{\tilde{C}}{1-\theta}}$. 
\end{rem}
It is possible to generalise the previous result to functions defined
in an ellipse. The proof is elementary, but needed to show the precise
dependence of $C$ on $R_{1}-r$.
\begin{lem}
\label{lem:momm-ellipse}Take $0<r<R_{1}\le M$ and $0<\eta\le R_{2}$.
Let $g$ be a holomorphic function in the ellipse
\[
E=\{\k\in\C:\frac{(\Re\k)^{2}}{R_{1}^{2}}+\frac{(\Im\k)^{2}}{R_{2}^{2}}<1\}
\]
such that $\left|g(0)\right|\ge C_{0}>0$ and $\sup_{E}\left|g\right|\le D$.
There exists $\k\in(r,R_{1})$ such that
\[
\left|g(\k)\right|\ge C
\]
for some constant $C>0$ depending on $M$, $R_{1}-r$, $\eta$, $C_{0}$
and $D$ only.\end{lem}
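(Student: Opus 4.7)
The plan is to reduce Lemma~\ref{lem:momm-ellipse} to the disk version, Lemma~\ref{lem:momm}, via a chain of disks along the real axis propagating the lower bound on $|g|$ from $0$ to a point in $(r, R_1)$. This is geometrically available since $E$ contains the segment $(-R_1, R_1) \subset \R$ and extends vertically by at least $\eta$, so small enough disks centred on that segment fit inside $E$.

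The first step is geometric: fix $\delta := (R_1 - r)/2$ and exhibit $\rho > 0$, depending only on $M$, $R_1 - r$ and $\eta$, such that $B(x, \rho) \subseteq E$ for every $x \in [0, R_1 - \delta]$. The sufficient condition $(|x|+\rho)^2/R_1^2 + \rho^2/R_2^2 \le 1$, combined with $R_1 \le M$ and $R_2 \ge \eta$, is easily verified for $\rho = c\min(R_1 - r,\, \eta\sqrt{(R_1-r)/M})$ with a small absolute constant $c$.

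Next, I set $x_0 := 0$, with $|g(x_0)| \ge C_0$, and construct a sequence inductively. Given $x_j \in [0, R_1 - \delta]$ with $|g(x_j)| \ge C_j'$, the rescaling $\tilde g_j(w) := g(x_j + \rho w)$ is holomorphic on $B(0,1)$, satisfies $|\tilde g_j(0)| = |g(x_j)| \ge C_j'$ and $\sup_{B(0,1)}|\tilde g_j| \le D$. Lemma~\ref{lem:momm} applied with $\theta = 1/2$ yields $w_j \in [1/2, 1)$ such that $|\tilde g_j(w_j)| \ge C_{j+1}'$, where $C_{j+1}'$ depends only on $C_j'$ and $D$ (Remark~\ref{rem:momm} gives the explicit quantitative dependence). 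Setting $x_{j+1} := x_j + \rho w_j$ gives $x_{j+1} - x_j \in [\rho/2, \rho)$. I iterate until the first index $N$ with $x_N > r$; this occurs with $N \le \lceil 2r/\rho\rceil + 1$, and at that stage $x_N \le r + \rho \le (R_1+r)/2 < R_1$, so $\k := x_N \in (r, R_1)$ and $|g(\k)| \ge C_N'$. The final constant $C := C_N'$ depends only on $C_0$, $D$ and $N$, with $N$ controlled by $M$, $R_1 - r$ and $\eta$ through $\rho$.

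The main obstacle is the uniform choice of $\rho$ in the first step: the disk must fit inside $E$ both horizontally and vertically, which is precisely what forces $C$ to deteriorate as $\eta \to 0$ or $R_1 - r \to 0$, through $\rho$ shrinking and $N$ inflating. The rest is a quantitative iteration of Lemma~\ref{lem:momm} based on Remark~\ref{rem:momm}.
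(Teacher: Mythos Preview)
Your argument is correct. The chain-of-disks propagation is sound: the radius $\rho$ you choose indeed guarantees $B(x,\rho)\subseteq E$ for all $x\in[0,R_1-\delta]$, the iteration is well-defined because $x_j\le r<R_1-\delta$ at every step prior to termination, and the terminal point $x_N$ lands in $(r,(R_1+r)/2)\subset(r,R_1)$ since $\rho<\delta$. The number of steps $N$ is bounded via $r\le M$ and your formula for $\rho$, so the final constant depends only on the allowed data.

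The paper takes a genuinely different route: it applies Lemma~\ref{lem:momm} \emph{once}, after conformally mapping the ellipse to the unit disk. Concretely, it rescales $E$ to a normalized ellipse $\tilde E$ and then invokes the explicit Riemann map $\psi_2\colon B(0,1)\to\tilde E$ (from Nehari), checking that the image of $(r,R_1)$ under the inverse is an interval $(r',1)$ with $1-r'$ bounded below in terms of $M$, $R_1-r$ and $\eta$. Your approach is more elementary in that it avoids conformal maps entirely and uses only the obvious inclusion of small disks in $E$. What the paper's approach buys is a sharper quantitative constant: a single application of Momm's estimate (Remark~\ref{rem:momm}) gives $C=C_0(D/C_0)^{-\tilde C/(1-\theta)}$, whereas your $N$-fold iteration compounds this bound $N$ times, and $N$ can be large (of order $M/\rho$). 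Since the paper later tracks the dependence of the constants on $|\mathcal A|$ and $M$ (see the remark following Theorem~\ref{thm:quantitative bounds-helmholtz-zeta}), this quantitative difference matters for the applications, though not for the lemma as stated.
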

\begin{proof}
Without loss of generality,
we can always suppose $R_{2}\le R_{1}$.

Set $\beta:=\sqrt{R_{1}^{2}+R_{2}^{2}}\le\sqrt{2}M$, $r_{i}=R_{i}/\beta$
and $\tilde{E}:=\{\k\in\C:\frac{(\Re\k)^{2}}{r_{1}^{2}}+\frac{(\Im\k)^{2}}{r_{2}^{2}}<1\}$.
The map $\psi_{1}\colon\tilde{E}\to E$, $\k\mapsto\beta\k$ is bi-holomorphic
and the segment $(r,R_{1})\subseteq E$ is transformed via $\psi_{1}^{-1}$
into $(r/\beta,R_{1}/\beta)\subseteq\tilde{E}$. Consider now a bi-holomorphic
transformation $\psi_{2}\colon B(0,1)\to\tilde{E}$. The existence
of this map is a consequence of the Riemann mapping theorem, and an
explicit formula is given in \cite[page 296]{nehari}. In particular,
\textcolor{black}{$\psi_{2}$} can be chosen so that \textcolor{black}{$\psi_{2}(0)=0$} and \textcolor{black}{$\psi_{2}^{-1}((r/\beta,R_{1}/\beta))=(r',1)$}
for some $r'\in(0,1)$. Since \textcolor{black}{$(R_{1}-r)/\beta\ge (R_{1}-r)/(\sqrt{2}M) $} and $1\le r_{1}/r_{2}=R_{1}/R_{2}\le M/\eta$
we have $1-r'\ge c$ \textcolor{black}{for some $c>0$ depending only on $M$, $R_{1}-r$, $\eta$,
$C_{0}$ and $D$}, as the ratio $r_{1}/r_{2}$ determines the deformation
carried out by $\psi_{2}$. Hence $r'\le\theta$ with $\theta=1-c$. 

Consider now the map $g'\colon B(0,1)\to\C$ defined by $g'=g\circ\psi_{1}\circ\psi_{2}$.
We have that $g'$ is holomorphic in $B(0,1)$, $\left|g'(0)\right|=\left|g(0)\right|\ge C_{0}$
and $\sup_{B(0,1)}\left|g'\right|=\sup_{E}\left|g\right|\le D$. By
Lemma~\ref{lem:momm} applied to $g'$ and $r'$ we obtain the result.
\end{proof}

\subsection{\label{sub:Proof-of-Theorem-1}The Helmholtz equation}

We prove here Theorem~\ref{thm:quantitative bounds-helmholtz-zeta}.
For simplicity, we shall say that a positive constant depends on a
priori data if it depends on $\Omega$,  $\maxa$, $\left|\A\right|$,
$M$, $C_{\zeta}$, $\left\Vert a\right\Vert _{\Cl^{\order-1,\alpha}(\overline{\Omega};\R^{d\times d})}$,
$\left\Vert (\epsilon,\sigma)\right\Vert _{W^{\order-1,\infty}(\Omega;\R)^{2}}$,
$\left\Vert \phi_{i}\right\Vert _{\Cl^{\order,\alpha}(\overline{\Omega};\C)}$
and $C_{0}$ only. \textcolor{black}{Recall that $D$ is given by \eqref{eq:definition of D} and that $\Sigma=\{\lambda_{l}:l\in\N^{*}\}$ denotes the set of the Dirichlet
eigenvalues of problem \eqref{eq:helmholtz-multi}. During the proof, we shall often refer to the results given in the Appendix.}

We first show that the map $\k\in D\mapsto u_{\k}^{i}\in C^{\order}$
is holomorphic. This will be one of the basic tools of the proof of
Theorem~\ref{thm:quantitative bounds-helmholtz-zeta}.
\begin{prop}
\label{pro: Helmholtz holomorphic}Under the assumptions of Theorem~\ref{thm:quantitative bounds-helmholtz-zeta},
the map
\[
D\longrightarrow\Cl^{\order}(\overline{\Omega};\C),\quad\k\longmapsto u_{\k}^{i}
\]
is holomorphic.\end{prop}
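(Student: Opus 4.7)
The plan is to express $u_{\k}^{i}$ as the image under a holomorphic operator-valued map of a polynomial in $\k$, and then to lift the resulting $H^{1}$-holomorphy to $\Cl^{\order}$ via the elliptic regularity theory.

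First, I would fix any extension $\Phi_{i}\in\Cl^{\order,\alpha}(\overline{\Omega};\C)$ of $\phi_{i}$ and write $v_{\k}^{i}:=u_{\k}^{i}-\Phi_{i}\in\V$, so that
\[
L_{\k}\, v_{\k}^{i}=-L_{\k}\Phi_{i},\qquad L_{\k}:=-\div(a\,\nabla\,\cdot\,)-(\k^{2}\epsi+\ii\k\sigma).
\]
Viewed as an element of $\mathcal{B}(\V,\Vp)$, $L_{\k}$ is a quadratic polynomial in $\k$; analogously $\k\mapsto L_{\k}\Phi_{i}\in\Vp$ is quadratic. Both maps are therefore entire.

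Next, Proposition \ref{prop:helmoltz-wellposedness} says that $L_{\k}$ is invertible for every $\k\in D$. Combined with the entire dependence $\k\mapsto L_{\k}$, this yields that $\k\mapsto L_{\k}^{-1}$ is holomorphic from $D$ into $\mathcal{B}(\Vp,\V)$; indeed, near any $\k_{0}\in D$ one has the convergent Neumann expansion
\[
L_{\k}^{-1}=L_{\k_{0}}^{-1}\sum_{n\ge0}\bigl((L_{\k_{0}}-L_{\k})L_{\k_{0}}^{-1}\bigr)^{n}.
\]
Composing with the polynomial $\k\mapsto -L_{\k}\Phi_{i}$ shows that $\k\mapsto v_{\k}^{i}$, and hence $\k\mapsto u_{\k}^{i}$, is holomorphic from $D$ into $\V$.

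To upgrade to the $\Cl^{\order}$-norm, I would verify continuity and existence of the complex derivative directly at that level. The difference quotient $w_{\tau}:=(u_{\k_{0}+\tau}^{i}-u_{\k_{0}}^{i})/\tau$ satisfies the Dirichlet problem
\[
L_{\k_{0}+\tau}\,w_{\tau}=\bigl[(2\k_{0}+\tau)\epsi+\ii\sigma\bigr]\,u_{\k_{0}}^{i},\qquad w_{\tau}=0\text{ on }\bo,
\]
whose right-hand side lies in $W^{\order-1,\infty}$ (or $L^{\infty}$ when $\order=0$) and converges as $\tau\to0$ to $(2\k_{0}\epsi+\ii\sigma)\,u_{\k_{0}}^{i}$. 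Writing $w_{0}$ for the solution of $L_{\k_{0}}w_{0}=(2\k_{0}\epsi+\ii\sigma)\,u_{\k_{0}}^{i}$ with zero trace and applying the elliptic regularity estimate of Proposition \ref{pro: Helmholtz regularity} to $w_{\tau}-w_{0}$ yields $w_{\tau}\to w_{0}$ in $\Cl^{\order}$; continuity of $\k\mapsto u_{\k}^{i}$ in $\Cl^{\order}$ follows analogously by replacing $w_\tau$ with $u_{\k_0+\tau}^i - u_{\k_0}^i$.

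The main obstacle is precisely this last step: the transition from the natural $H^{1}$-holomorphy of the resolvent to $\Cl^{\order}$-holomorphy of the solution. The argument succeeds because the regularity assumption \eqref{eq:regularity assumption-helmh-multi} on $a$, $\epsi$, $\sigma$ guarantees that the right-hand side of the difference-quotient equation has the regularity required by Proposition \ref{pro: Helmholtz regularity}, and because the bounds on $L_{\k}^{-1}$ in the relevant Schauder/Sobolev spaces are locally uniform in $\k$ on compact subsets of $D$.
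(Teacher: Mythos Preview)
Your argument is correct and is precisely the standard route: polynomial dependence of $L_{\k}$, Neumann-series holomorphy of the resolvent in $\V$, then a regularity bootstrap via Proposition~\ref{pro: Helmholtz regularity} applied to the difference-quotient equation. The paper does not spell this out---it simply invokes well-posedness and regularity and defers to \cite{alberti2013multiple,albertidphil}---but those references carry out exactly the argument you outline, so your proof is essentially the same as the paper's (by reference), only made explicit.
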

\begin{proof}
In view of Propositions~\ref{prop:helmoltz-wellposedness} and \ref{pro: Helmholtz regularity},
problem \eqref{eq:helmholtz-multi} is well-posed and $u_{\k}^{i}\in C^{\order}$.
If \eqref{eq:sigma equal 0-multi} holds, this result has already
been proved in \cite{alberti2013multiple}. The case where \eqref{eq:sigma bounds below-multi}
holds can be handled similarly \cite{albertidphil}.
\end{proof}

\textcolor{black}{Define for every $j=1,\dots,r$
\[
\theta^{j}\colon D\to\Cl(\overline{\Omega};\C),\;\k\mapsto\zeta^{j}\bigl(u_{\k}^{1},\dots,u_{\k}^{b}\bigr).
\]
As a consequence of the previous result, the maps $\theta^j$ are holomorphic.}
\begin{lem}
\label{lem:zeta j holomorphic}Under the hypotheses of Theorem~\ref{thm:quantitative bounds-helmholtz-zeta},
the map $\theta^{j}\colon D\to\Cl(\overline{\Omega};\C)$ is holomorphic
for all $j$.\end{lem}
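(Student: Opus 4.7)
The plan is to present $\theta^j$ as a composition of two holomorphic maps and conclude via Lemma~\ref{lem:analytic functions}. Concretely, I would factor
\[
\theta^j \colon D \xrightarrow{\ \Phi\ } \Cl^{\order}(\overline{\Omega};\C)^{b} \xrightarrow{\ \zeta^j\ } \Cl(\overline{\Omega};\C),
\]
where $\Phi(\k) = (u_{\k}^{1},\dots,u_{\k}^{b})$.

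First, I would argue that $\Phi$ is holomorphic. By Proposition~\ref{pro: Helmholtz holomorphic}, each component $\k \mapsto u_{\k}^{i}$ is holomorphic as a map from $D$ into $\Cl^{\order}(\overline{\Omega};\C)$. Applying part (3) of Lemma~\ref{lem:analytic functions} to the product map, we conclude that $\Phi$ itself is holomorphic into the product Banach space $\Cl^{\order}(\overline{\Omega};\C)^{b}$.

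Next, I would observe that $\zeta^j$ is holomorphic on $\Cl^{\order}(\overline{\Omega};\C)^{b}$. Indeed, by assumption \eqref{eq:assumptions map zed-def}, the full map $\zeta=(\zeta^{1},\dots,\zeta^{r})$ is holomorphic into $\Cl(\overline{\Omega};\C)^{r}$; applying part (3) of Lemma~\ref{lem:analytic functions} in the other direction (componentwise characterisation), each $\zeta^{j}$ is individually holomorphic.

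Finally, I would invoke part (2) of Lemma~\ref{lem:analytic functions}: the composition $\theta^{j} = \zeta^{j} \circ \Phi$ of two holomorphic maps between complex Banach spaces is holomorphic. This gives the claim for every $j=1,\dots,r$. No real obstacle is anticipated, since the work has already been done in Proposition~\ref{pro: Helmholtz holomorphic} and in the formulation of \eqref{eq:assumptions map zed}; the only point to be slightly careful about is matching the domains/codomains so that the composition is well-defined, which is immediate since $u_{\k}^{i}\in\Cl^{\order}(\overline{\Omega};\C)$ by elliptic regularity (Proposition~\ref{pro: Helmholtz regularity}).
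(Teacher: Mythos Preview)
Your proposal is correct and is exactly the argument the paper gives: it cites Proposition~\ref{pro: Helmholtz holomorphic}, assumption~\eqref{eq:assumptions map zed-def}, and parts~2 and~3 of Lemma~\ref{lem:analytic functions}, which is precisely your factorisation $\theta^{j}=\zeta^{j}\circ\Phi$ together with the componentwise characterisation of holomorphicity. There is nothing to add.
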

\begin{proof}
It follows from Proposition~\ref{pro: Helmholtz holomorphic},
\eqref{eq:assumptions map zed-def} and Lemma~\ref{lem:analytic functions},
parts 2 and 3.
\end{proof}
\begin{figure}[t]
\caption{\label{fig:domain D}The domain $D$ and the admissible set \textcolor{red}{$\A$}.}

\begin{centering}
\subfloat[\label{fig:domain D-real}$D=\C\setminus\sqrt{\Sigma}$ if $\eqref{eq:sigma equal 0-multi}$
holds.]
{\includegraphics[scale=1]{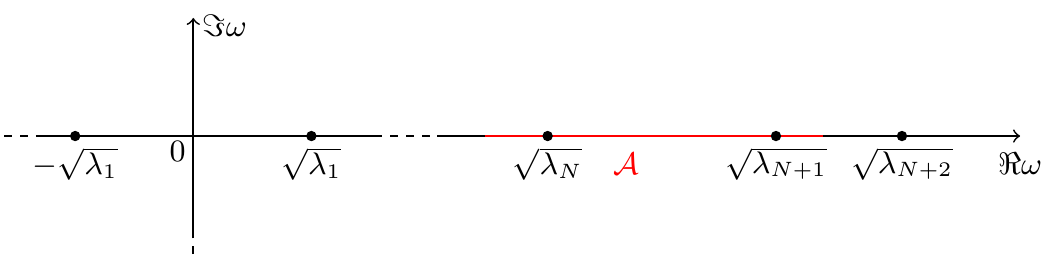}}
\par\end{centering}

\centering{}\subfloat[\label{fig:domain D-complex}$D=\{\k\in\C:\left|\Im\k\right|<\eta\}$
if $\eqref{eq:sigma bounds below-multi}$ holds.]{\includegraphics[scale=1]{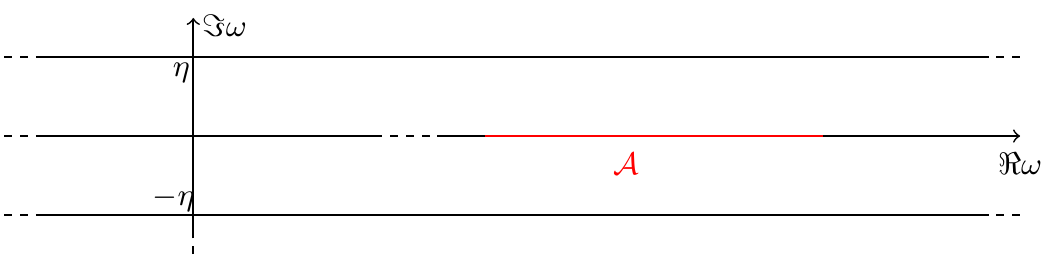}}
\end{figure}
We next study some a priori bounds on $\theta^{j}$ and $\partial_{\k}\theta^{j}$
(notation of Proposition~\ref{prop:helmoltz-wellposedness}).
\begin{lem}
\label{lem:bounds on u}Assume that the hypotheses of Theorem~\ref{thm:quantitative bounds-helmholtz-zeta}
hold true and take $j=1,\dots,r$ and $\k\in B(0,M)\cap D$.
\begin{enumerate}
\item If \eqref{eq:sigma equal 0-multi} holds true then there exists $C>0$
depending on a priori data such that

\begin{enumerate}
\item $\bigl\Vert\theta_{\k}^{j}\bigr\Vert_{C(\overline{\Omega};\C)}\le C\left[1+\sup_{l\in\N^{*}}\frac{1}{\left|\lambda_{l}-\k^{2}\right|}\right]^{s}$;
\item $\bigl\Vert\partial_{\k}\theta_{\k}^{j}\bigr\Vert_{C(\overline{\Omega};\C)}\le C\left[1+\sup_{l\in\N^{*}}\frac{1}{\left|\lambda_{l}-\k^{2}\right|}\right]^{s+2}$.
\end{enumerate}
\item If \eqref{eq:sigma bounds below-multi} holds true then there exists
$C>0$ depending on a priori data such that

\begin{enumerate}
\item $\bigl\Vert\theta_{\k}^{j}\bigr\Vert_{C(\overline{\Omega};\C)}\le C$;
\item $\bigl\Vert\partial_{\k}\theta_{\k}^{j}\bigr\Vert_{C(\overline{\Omega};\C)}\le C$.
\end{enumerate}
\end{enumerate}
\end{lem}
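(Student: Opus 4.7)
The plan is to reduce both estimates to standard resolvent bounds on $u_\kappa^i$ and $\partial_\kappa u_\kappa^i$ via the growth/derivative assumptions \eqref{eq:assumptions map zed-a}--\eqref{eq:assumptions map zed-b}. First I would record the chain rule for $\theta^{j}$. Since $\zeta$ is holomorphic and $\kappa\mapsto u_\kappa^i$ is holomorphic from $D$ into $\Cl^{\order}(\overline\Omega;\C)$ by Proposition~\ref{pro: Helmholtz holomorphic}, Lemma~\ref{lem:analytic functions}(2) yields
\[
\partial_\kappa\theta_\kappa^{j}=D\zeta^{j}_{(u_\kappa^1,\dots,u_\kappa^b)}(\partial_\kappa u_\kappa^1,\dots,\partial_\kappa u_\kappa^b).
\]
Combined with \eqref{eq:assumptions map zed-a}--\eqref{eq:assumptions map zed-b}, this gives
\[
\bigl\Vert\theta_\kappa^{j}\bigr\Vert_{C(\overline\Omega;\C)}\le c_\zeta\bigl(1+\textstyle\sum_i\|u_\kappa^i\|_{\Cl^{\order}}^{s}\bigr),\qquad
\bigl\Vert\partial_\kappa\theta_\kappa^{j}\bigr\Vert_{C(\overline\Omega;\C)}\le c_\zeta\bigl(1+\textstyle\sum_i\|u_\kappa^i\|_{\Cl^{\order}}^{s}\bigr)\textstyle\sum_i\|\partial_\kappa u_\kappa^i\|_{\Cl^{\order}}.
\]
So the lemma reduces to $\Cl^{\order}$-bounds on $u_\kappa^i$ and $\partial_\kappa u_\kappa^i$.

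Next I would bound $\|u_\kappa^i\|_{\Cl^{\order}}$. The well-posedness statement (Proposition~\ref{prop:helmoltz-wellposedness}) gives the $H^1$ resolvent estimate
\[
\|u_\kappa^i\|_{H^1}\le C\bigl(1+\sup_{l}\tfrac{1}{|\lambda_l-\kappa^{2}|}\bigr)\|\phi_i\|_{H^{1/2}}
\]
in case \eqref{eq:sigma equal 0-multi}, while in case \eqref{eq:sigma bounds below-multi} the same estimate holds with the supremum replaced by an absolute constant depending only on $\Omega$, $\Lambda$ and $\eta$, uniformly for $\kappa\in B(0,M)\cap D$. Promoting this to $\Cl^{\order}$ uses the elliptic regularity result of Proposition~\ref{pro: Helmholtz regularity}, which is why the hypothesis \eqref{eq:regularity assumption-helmh-multi} enters. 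The constant in the Schauder-type estimate depends only on a priori data, so I obtain $\|u_\kappa^i\|_{\Cl^{\order}}\le C(1+\sup_l|\lambda_l-\kappa^{2}|^{-1})$ in case (1) and $\|u_\kappa^i\|_{\Cl^{\order}}\le C$ in case (2). Raising to the power $s$ yields (1a) and (2a).

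For the derivative bound I would differentiate the PDE \eqref{eq:helmholtz-multi} formally in $\kappa$; the resulting equation for $v_\kappa^i:=\partial_\kappa u_\kappa^i$ is
\[
-\div(a\,\nabla v_\kappa^i)-(\kappa^{2}\epsi+\ii\kappa\sigma)v_\kappa^i=(2\kappa\epsi+\ii\sigma)u_\kappa^i\quad\text{in }\Omega,\qquad v_\kappa^i=0\quad\text{on }\bo.
\]
Applying the same resolvent estimate to $v_\kappa^i$ with right-hand side $(2\kappa\epsi+\ii\sigma)u_\kappa^i$ produces an extra factor $1+\sup_l|\lambda_l-\kappa^{2}|^{-1}$; since the source already has an $H^1$-norm bounded by the same quantity, in case (1) we pick up \emph{two} factors of the resolvent, giving
\[
\|v_\kappa^i\|_{\Cl^{\order}}\le C\bigl(1+\sup_{l}\tfrac{1}{|\lambda_l-\kappa^{2}|}\bigr)^{2},
\]
and in case (2) we only get a uniform bound. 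Multiplying by the $\|\zeta\|^{s}$-factor from the chain rule then yields $(s+2)$ in (1b) and a uniform bound in (2b).

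The one nontrivial point I expect is to justify the formal differentiation of the PDE: this is where the holomorphy of $\kappa\mapsto u_\kappa^i$ into $\Cl^{\order}$ (Proposition~\ref{pro: Helmholtz holomorphic}) is crucial, since it identifies $\partial_\kappa u_\kappa^i$ as an element of $\Cl^{\order}$ on which the operator $-\div(a\nabla\cdot)-(\kappa^{2}\epsi+\ii\kappa\sigma)$ can be applied in the strong sense, and gives boundary value zero automatically because the boundary datum $\phi_i$ does not depend on $\kappa$. After this identification, the rest is a direct application of Propositions~\ref{prop:helmoltz-wellposedness} and \ref{pro: Helmholtz regularity}.
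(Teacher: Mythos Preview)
Your proposal is correct and follows essentially the same approach as the paper: bound $u_\omega^i$ in $\Cl^{\order}$ via Propositions~\ref{prop:helmoltz-wellposedness} and~\ref{pro: Helmholtz regularity}, differentiate the equation in $\omega$ to obtain the analogous bound (squared) for $\partial_\omega u_\omega^i$, and then combine via the chain rule and the growth assumptions \eqref{eq:assumptions map zed-a}--\eqref{eq:assumptions map zed-b} on $\zeta$. The paper's argument is virtually identical, with the only cosmetic difference being that it treats case~(1) in full and then remarks that case~(2) is analogous, whereas you handle both in parallel.
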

\begin{proof}
\textcolor{black}{We first prove part 1, namely we take $\sigma=0$.} In view of Proposition~\ref{prop:helmoltz-wellposedness}, part 1
and Proposition~\ref{pro: Helmholtz regularity} we have
\begin{equation}
\left\Vert u_{\k}^{i}\right\Vert _{C^{\order}(\overline{\Omega};\C)}\le C\left[1+\sup_{l\in\N^{*}}\frac{1}{\left|\lambda_{l}-\k^{2}\right|}\right],\qquad\k\in B(0,M)\cap D,\label{eq:bounds on u}
\end{equation}
whence we obtain part 1a from \eqref{eq:assumptions map zed-a}.

It can be easily seen that $\partial_{\k}u_{\k}^{i}$ is the solution
to
\[
\left\{ \begin{array}{l}
-\div(a\,\nabla(\partial_{\k}u_{\k}^{i}))-\k^{2}\epsilon\,\partial_{\k}u_{\k}^{i}=2\k\epsilon u_{\k}^{i}\qquad\text{in \ensuremath{\Omega},}\\
\partial_{\k}u_{\k}^{i}=0\qquad\text{on \ensuremath{\partial\Omega}.}
\end{array}\right.
\]
Arguing as before,  from Proposition~\ref{prop:helmoltz-wellposedness},
part 1 and Proposition~\ref{pro: Helmholtz regularity} we obtain
\begin{equation}
\left\Vert \partial_{\k}u_{\k}^{i}\right\Vert _{C^{\order}(\overline{\Omega};\C)}\le C\left[1+\sup_{l\in\N^{*}}\frac{1}{\left|\lambda_{l}-\k^{2}\right|}\right]^{2}.\label{eq:bounds on u'}
\end{equation}
Since $\partial_{\k}\theta_{\k}^{j}=D\zeta_{(u_{\k}^{1},\dots,u_{\k}^{b})}^{j}(\partial_{\k}u_{\k}^{1},\dots,\partial_{\k}u_{\k}^{b})$
we have
\[
\begin{split}\bigl\Vert\partial_{\k}\theta_{\k}^{j}\bigr\Vert_{C(\overline{\Omega};\C)} & =\bigl\Vert D\zeta_{(u_{\k}^{1},\dots,u_{\k}^{b})}^{j}(\partial_{\k}u_{\k}^{1},\dots,\partial_{\k}u_{\k}^{b})\bigr\Vert_{C(\overline{\Omega};\C)}\\
 & \le\bigl\Vert D\zeta_{(u_{\k}^{1},\dots,u_{\k}^{b})}^{j}\bigr\Vert_{\mathcal{B}(\Cl^{\order}(\overline{\Omega};\C){}^{b},\Cl(\overline{\Omega};\C))}\bigl\Vert(\partial_{\k}u_{\k}^{1},\dots,\partial_{\k}u_{\k}^{b})\bigr\Vert_{\Cl^{\order}(\overline{\Omega};\C){}^{b}}\\
 & \le C\left[1+\sup_{l\in\N^{*}}\frac{1}{\left|\lambda_{l}-\k^{2}\right|}\right]^{s+2},
\end{split}
\]
where the last inequality follows from \eqref{eq:assumptions map zed-b},
\eqref{eq:bounds on u} and \eqref{eq:bounds on u'}. Part 1b is now
proved.

Part 2 can be proved analogously, by using part 2 of Proposition~\ref{prop:helmoltz-wellposedness}
in lieu of part 1. The details are left to the reader.
\end{proof}
In the following two lemmata we study the case where \eqref{eq:sigma equal 0-multi}
holds true, and how to deal with the presence of the eigenvalues (see
Figure~\ref{fig:D-real-Atilde}).

\begin{figure}
\centering{}\caption{\label{fig:D-real-Atilde}The admissible sets \textcolor{red}{$\A$}
and \textcolor{blue}{$\tilde{\A}$}.}
\includegraphics[scale=1]{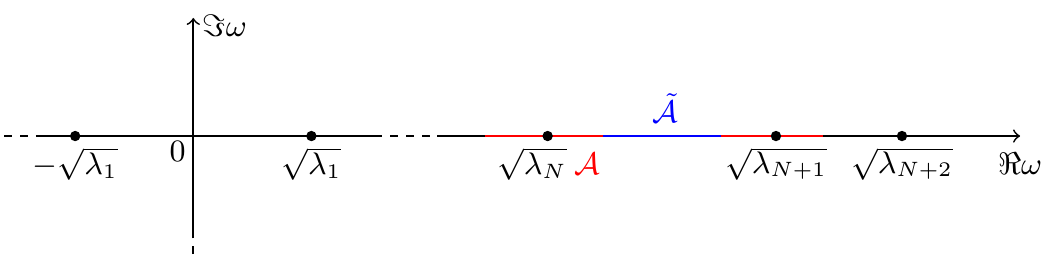}
\end{figure}

\begin{lem}
\label{lem:A'}Under the hypotheses of Theorem~\ref{thm:quantitative bounds-helmholtz-zeta},
assume that \eqref{eq:sigma equal 0-multi} holds true. Then there
exist $N\in\N^{*}$, $\delta>0$ and $\beta>0$ depending on $\Omega$,
$\maxa$, $\left|\A\right|$ and $M$ only and a closed interval $\tilde{\A}=[\tilde{K}_{min},\tilde{K}_{max}]\subseteq\A$
such that
\[
d(\tilde{\A}^{2},\Sigma)\ge\delta,\qquad\tilde{\A}^{2}\subseteq(\lambda_{l},\lambda_{l+1}),\qquad\bigl|\tilde{\A}\bigr|\ge\beta
\]
for some $l\le N$\textcolor{black}{, where $\tilde{\A}^{2}=\{\k^2:\k\in\tilde{\A}\}$.}
\end{lem}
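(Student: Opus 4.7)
The plan is to use a counting bound on the Dirichlet eigenvalues in the admissible range, and then a pigeonhole argument on the partition of $\A^2$ produced by these eigenvalues.

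First, I would bound the number of eigenvalues in $(0,M^{2}]$. Because $a$ is uniformly elliptic with constant $\maxa$ and $\epsilon$ satisfies $\maxa^{-1}\le\epsilon\le\maxa$, a standard comparison with the plain Laplacian together with the Weyl asymptotics for the Dirichlet Laplacian on $\Omega$ gives
\[
\lambda_{l}\ge c(\Omega,\maxa)\, l^{2/d},\qquad l\in\N^{*},
\]
so that $N:=\#\{l:\lambda_{l}\le M^{2}\}\le C(\Omega,\maxa,M)$. This $N$ depends only on $\Omega$, $\maxa$ and $M$.

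Second, I would analyze the set $\A^{2}=[K_{\min}^{2},K_{\max}^{2}]$. Its length is
\[
\left|\A^{2}\right|=K_{\max}^{2}-K_{\min}^{2}=\left|\A\right|(K_{\max}+K_{\min})\ge\left|\A\right|^{2},
\]
since $K_{\max}\ge\left|\A\right|$. The eigenvalues lying inside $\A^{2}$ are at most $N$ in number, so they split $\A^{2}$ into at most $N+1$ closed subintervals. By pigeonhole, one such subinterval $[a,b]\subseteq\A^{2}$ satisfies
\[
b-a\ge\frac{\left|\A^{2}\right|}{N+1}\ge\frac{\left|\A\right|^{2}}{N+1},
\]
and $(a,b)$ is disjoint from $\Sigma$; moreover $a,b$ are either consecutive elements of $\Sigma$ or one of them is an endpoint of $\A^{2}$.

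Third, I would set $\tilde{\A}^{2}=[a+(b-a)/3,\,b-(b-a)/3]$. Then $\tilde{\A}^{2}\subseteq(\lambda_{l},\lambda_{l+1})$ for some $l\le N$ (with the convention $\lambda_{0}=0$, $\lambda_{N+1}=+\infty$ if $a$ or $b$ is an endpoint of $\A^{2}$; the construction is unchanged), and
\[
d(\tilde{\A}^{2},\Sigma)\ge\frac{b-a}{3}\ge\frac{\left|\A\right|^{2}}{3(N+1)}=:\delta.
\]
Finally, setting $\tilde{K}_{\min}=\sqrt{a+(b-a)/3}$ and $\tilde{K}_{\max}=\sqrt{b-(b-a)/3}$, from $\tilde{K}_{\max}\le M$ we obtain
\[
\left|\tilde{\A}\right|=\tilde{K}_{\max}-\tilde{K}_{\min}\ge\frac{\tilde{K}_{\max}^{2}-\tilde{K}_{\min}^{2}}{2M}\ge\frac{\left|\A\right|^{2}}{6(N+1)M}=:\beta.
\]
The only nontrivial ingredient is the eigenvalue count, which follows from Weyl-type bounds; everything else is elementary partitioning and a square-root estimate.
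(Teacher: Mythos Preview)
Your proof is correct and follows essentially the same approach as the paper: a Weyl-type eigenvalue count (Lemma~\ref{lem:weyl's lemma} in the paper) bounds $N$, a pigeonhole argument gives a large eigenvalue-free subinterval of $\A^{2}$, and shrinking that subinterval by a third yields the required $\tilde{\A}^{2}$. Your version is in fact slightly more detailed than the paper's, since you make the constants $\delta$ and $\beta$ explicit and handle the edge case where the subinterval meets an endpoint of $\A^{2}$.
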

\begin{proof}
In view of Lemma~\ref{lem:weyl's lemma} there exists $N\in\N^{*}$
depending on $\Omega$, $\maxa$ and $M$ only such that $[0,K_{max}^{2}]\cap\Sigma\subseteq\{\lambda_{1},\dots,\lambda_{N}\}$.
In particular, $\#(\A^{2}\cap\Sigma)\le N$. Therefore there exists
$l\le N$ such that $\left|\A^{2}\cap(\lambda_{l},\lambda_{l+1})\right|\ge\left|\A^{2}\right|(N+1)^{-1}$.
Write $\A^{2}\cap(\lambda_{l},\lambda_{l+1})=[p,q]$ and define $\tilde{\A}$
by $\tilde{\A}^{2}=[p+\frac{\left|\A^{2}\right|}{3(N+1)},q-\frac{\left|\A^{2}\right|}{3(N+1)}]$.
This concludes the proof, since $ $$\left|\A^{2}\right|$ depends
on $\left|\A\right|$ and $N$ only.
\end{proof}
Thanks to Lemma~\ref{lem:A'}, by taking a subinterval of the original
admissible set $\A$, without loss of generality we can assume that
\begin{equation}
d(\A^{2},\Sigma)\ge\delta,\qquad\A^{2}\subseteq(\lambda_{l},\lambda_{l+1}),\qquad l\le N\label{eq:new assumptions on A}
\end{equation}
for some $\delta>0$ and $N\in\N^{*}$ depending on $\Omega$, $\maxa$,
$\left|\A\right|$ and $M$ only. Moreover, the new size of $\A$
is comparable with the size of the original $\A$ by means of constants
depending on $\Omega$, $\maxa$, $\left|\A\right|$ and $M$ only.

The main idea is to apply Lemma~\ref{lem:momm-ellipse} to the maps
$\k\mapsto\theta_{\k}^{j}(x)$ and use the fact that in $\k=0$ they
are non-zero. However, in the case where \eqref{eq:sigma equal 0-multi}
holds true we first need to remove the singularities in the poles
$\pm\sqrt{\lambda_{1}},\dots,\pm\sqrt{\lambda_{N}}$.
\begin{lem}
\label{lem:poles}Under the hypotheses of Theorem~\ref{thm:quantitative bounds-helmholtz-zeta},
if \eqref{eq:sigma equal 0-multi} and \eqref{eq:new assumptions on A}
hold true then for any $x\in\Omega$ the function
\begin{equation}
\k\in B(0,K_{max})\longmapsto g_{x}^{j}(\k):=\theta_{\k}^{j}(x)\prod_{l=1}^{N}\frac{(\lambda_{l}-\k^{2})^{s}}{\lambda_{l}^{s}},\label{eq:g_x}
\end{equation}
is holomorphic in $B(0,K_{max})$ and
\[
\sup_{B(0,K_{max})}\bigl|g_{x}^{j}\bigr|\le C
\]
for some $C>0$ depending on a priori data.\end{lem}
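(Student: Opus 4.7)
The plan is to establish separately the holomorphy of $g_x^j$ on $B(0,K_{max})$ and the uniform a priori bound. Holomorphy will follow by cancelling the singularities of $\theta_\k^j$ (located at $\pm\sqrt{\lambda_l}$, $l\le N$) with the zeros of the polynomial prefactor, and the bound will follow from Lemma~\ref{lem:bounds on u}, part 1a, via a case split on which eigenvalue is closest to $\k^2$.

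For holomorphy, by Lemma~\ref{lem:zeta j holomorphic} the map $\k\mapsto\theta_\k^j(x)$ is holomorphic on $D=\C\setminus\sqrt{\Sigma}$, and under \eqref{eq:new assumptions on A} the only points of $\sqrt{\Sigma}$ inside $B(0,K_{max})$ are $\pm\sqrt{\lambda_l}$ with $l\le N$, since $\lambda_{N+1}>K_{max}^2$. Near each such pole $\pm\sqrt{\lambda_{l_0}}$ all other factors $|\lambda_l-\k^2|$ ($l\ne l_0$) stay bounded below, so Lemma~\ref{lem:bounds on u}, part 1a, gives $|\theta_\k^j(x)|\le C'|\lambda_{l_0}-\k^2|^{-s}$ locally. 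Hence $(\lambda_{l_0}-\k^2)^s\theta_\k^j(x)$ is a bounded holomorphic function on a punctured neighbourhood of $\pm\sqrt{\lambda_{l_0}}$ and extends holomorphically there by Riemann's removable singularity theorem; multiplying by the remaining (entire) factors yields $g_x^j$ holomorphic on $B(0,K_{max})$.

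For the uniform bound, fix $\k\in B(0,K_{max})$ and pick $l^*\in\N^*$ minimising $|\lambda_l-\k^2|$ (which exists since $\lambda_l\to\infty$). Combining Lemma~\ref{lem:bounds on u}, part 1a, with the trivial estimate $|\lambda_l-\k^2|\le\lambda_l+K_{max}^2$ yields
\[
|g_x^j(\k)|\le C\bigl(1+|\lambda_{l^*}-\k^2|^{-1}\bigr)^s\prod_{l=1}^{N}\frac{|\lambda_l-\k^2|^s}{\lambda_l^s}.
\]
If $l^*>N$, then by \eqref{eq:new assumptions on A} $|\lambda_{l^*}-\k^2|\ge\lambda_{N+1}-K_{max}^2\ge\delta$, so the first factor is a priori controlled, and each term in the product is bounded by $(1+K_{max}^2/\lambda_1)^s$. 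If $l^*\le N$, the zero in the product at $l=l^*$ absorbs the singularity in the first factor, leaving a contribution $\le\lambda_{l^*}^{-s}(1+|\lambda_{l^*}-\k^2|)^s\le\lambda_{l^*}^{-s}(1+\lambda_{l^*}+K_{max}^2)^s$, which is a priori bounded; the remaining factors are handled as in the previous case.

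The main obstacle is keeping everything under purely a priori control. The count $N$ of eigenvalues $\le K_{max}^2$ is bounded in terms of $\Omega$, $\maxa$ and $M$ by Lemma~\ref{lem:weyl's lemma} (as already used in Lemma~\ref{lem:A'}), and $\lambda_1$ admits an a priori lower bound in terms of $\Omega$ and $\maxa$ via the Rayleigh quotient. Together these estimates keep the number of factors in the product, the ratios $K_{max}^2/\lambda_l$ ($l\le N$), and the constant $\delta$ all bounded by a priori data, which is exactly what is required.
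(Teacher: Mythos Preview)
Your proof is correct and follows essentially the same approach as the paper: both use Lemma~\ref{lem:bounds on u}, part 1a, to control $\theta_\k^j$, cancel the at-most-order-$s$ singularities at $\pm\sqrt{\lambda_l}$ ($l\le N$) with the polynomial prefactor, and invoke the gap $|\lambda_l-\k^2|\ge\delta$ for $l>N$ together with the a priori bounds $|\lambda_l-\k^2|\le 2M^2$ and $\lambda_1\ge C(\Omega,\Lambda)$ for the remaining factors. The only cosmetic difference is that the paper runs one unified estimate on $B(0,K_{max})\cap D$ and deduces holomorphy from boundedness, whereas you separate the Riemann removable-singularity step from the global bound via a case split on the minimising index $l^*$; the content is the same.
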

\begin{proof}
Different positive constants depending on a priori data will be denoted
by $C$. In view of Lemma~\ref{lem:zeta j holomorphic}, the map
$\k\in\C\setminus\sqrt{\Sigma}\mapsto\theta_{\k}^{j}(x)\in\C$ is
holomorphic and by Lemma~\ref{lem:bounds on u}, part 1a, it is meromorphic
in $B(0,K_{max})$. For $\k\in B(0,K_{max})\cap D$ we have
\[
\begin{split}\bigl|g_{x}^{j}(\k)\bigr| & \le\bigl|\theta_{\k}^{j}(x)\bigr|\prod_{l=1}^{N}\frac{\left|\lambda_{l}-\k^{2}\right|^{s}}{\lambda_{l}^{s}}\\
 & \le C\lambda_{1}^{-Ns}\prod_{l=1}^{N}\left|\lambda_{l}-\k^{2}\right|^{s}\left[1+\sup_{l\in\N^{*}}\frac{1}{\left|\lambda_{l}-\k^{2}\right|^{s}}\right]\\
 & \le C\prod_{l=1}^{N}\left|\lambda_{l}-\k^{2}\right|^{s}\left[1+\sup_{l\le N}\frac{1}{\left|\lambda_{l}-\k^{2}\right|^{s}}+\sup_{l>N}\frac{1}{\left|\lambda_{l}-\k^{2}\right|^{s}}\right],
\end{split}
\]
where the second inequality follows from Lemma~\ref{lem:bounds on u},
part 1a. As a consequence 
\[
\begin{split}\bigl|g_{x}^{j}(\k)\bigr| & \le C\prod_{l=1}^{N}\left|\lambda_{l}-\k^{2}\right|^{s}\left[1+\sup_{l\le N}\frac{1}{\left|\lambda_{l}-\k^{2}\right|^{s}}\right]\\
 & \le C\left[\prod_{l=1}^{N}\left|\lambda_{l}-\k^{2}\right|^{s}+\frac{\prod_{l=1}^{N}\left|\lambda_{l}-\k^{2}\right|^{\textcolor{black}{s}}}{\inf_{l\le N}\left|\lambda_{l}-\k^{2}\right|^{\textcolor{black}{s}}}\right]\\
 & \le C,
\end{split}
\]
where the first inequality follows from
\[
\left|\lambda_{l}-\k^{2}\right|\ge\delta,\qquad l>N,
\]
and the third inequality from
\[
\left|\lambda_{l}-\k^{2}\right|\le2M^{2},\qquad l\le N.
\]
Therefore the map $g_{x}^{j}$ is holomorphic in $B(0,K_{max})$ and
$\sup_{B(0,K_{max})}\bigl|g_{x}^{j}\bigr|\le C$.
\end{proof}
The next lemma is the last step needed for the proof of Theorem~\ref{thm:quantitative bounds-helmholtz-zeta}. 
\begin{lem}
\label{lem:almost there}Under the hypotheses of Theorem~\ref{thm:quantitative bounds-helmholtz-zeta},
assume that if \eqref{eq:sigma equal 0-multi} holds then \eqref{eq:new assumptions on A}
holds. Then for every $x\in\Omega'$ there exists $\k_{x}\in\A$ such
that
\[
\bigl|\theta_{\k_{x}}^{j}(x)\bigr|\ge C,\qquad j=1,\dots,r
\]
for some $C>0$ depending on a priori data.\end{lem}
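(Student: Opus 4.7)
Fix $x \in \Omega'$ and set
\[
G_x(\k) := \prod_{j=1}^{r} \theta^j_\k(x).
\]
The plan is to apply the quantitative unique continuation Lemma~\ref{lem:momm-ellipse} to $G_x$ on a domain containing $0$ and $\A$, rather than to each $\theta^j$ separately, in order to produce a single $\k_x \in \A$ that is simultaneously good for all $j = 1, \dots, r$: indeed, once $|G_x(\k_x)| \ge C$ and each individual factor satisfies $|\theta^i_{\k_x}(x)| \le D$, dividing out the other factors gives $|\theta^j_{\k_x}(x)| \ge C/D^{r-1}$ for every $j$. Note that $|G_x(0)| \ge C_0^r$ by \eqref{eq:assumption k=00003D0}.

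In case \eqref{eq:sigma bounds below-multi}, $G_x$ is holomorphic and uniformly bounded on the strip $\{|\Im\k| < \eta\}$ by Lemma~\ref{lem:bounds on u}, part~2a. I would apply Lemma~\ref{lem:momm-ellipse} to $G_x$ on the ellipse with semi-axes $R_1 = K_{max}$, $R_2 = \eta/2$, which is contained in the strip, and with the lower bound for $\k$ of that lemma set to $K_{min}$. The conclusion is a frequency $\k_x \in (K_{min}, K_{max}) \subseteq \A$ with $|G_x(\k_x)| \ge C$, where $C$ depends only on a priori data via $M$, $|\A|$, $\eta$, $C_0$ and the upper bound on $|G_x|$.

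In case \eqref{eq:sigma equal 0-multi}, each $\theta^j_\k(x)$ is only meromorphic on $B(0, K_{max})$, with poles on $\sqrt{\Sigma}$. I would replace $G_x$ by $\tilde G_x := \prod_j g^j_x$, where $g^j_x$ is the function supplied by Lemma~\ref{lem:poles}: $\tilde G_x$ is then holomorphic and uniformly bounded on $B(0, K_{max})$ and still coincides with $G_x$ at the origin. Applying Lemma~\ref{lem:momm-ellipse} to $\tilde G_x$ on the disk $B(0, K_{max})$ (a degenerate ellipse with $R_1 = R_2 = K_{max}$, lower bound $K_{min}$) yields $\k_x \in \A$ with $|\tilde G_x(\k_x)| \ge C$. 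On $\A$, the correction factor $\prod_{l=1}^N |(\lambda_l - \k_x^2)/\lambda_l|^s$ is bounded above by an a priori constant, namely $(1 + M^2/\lambda_1)^{Ns}$; dividing it out transfers the lower bound from the $g^j_x(\k_x)$ back to the $\theta^j_{\k_x}(x)$.

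The central technical point is the product trick, which yields a single $\k_x$ valid for all $r$ constraints simultaneously at the cost of a factor $D^{r-1}$ in the lower bound. In the conservative case \eqref{eq:sigma equal 0-multi}, the additional difficulty of removing the spectral poles is handled by Lemma~\ref{lem:poles}, where the key input is the uniform separation \eqref{eq:new assumptions on A} that allows the correction factor to be controlled from both sides by a priori constants on $\A$.
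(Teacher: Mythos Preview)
Your proposal is correct and follows essentially the same route as the paper: form the product $\prod_j \theta^j_\k(x)$, apply Lemma~\ref{lem:momm-ellipse} (on the strip in case \eqref{eq:sigma bounds below-multi}, on $B(0,K_{max})$ after multiplying by the pole-removing factor of Lemma~\ref{lem:poles} in case \eqref{eq:sigma equal 0-multi}), and then recover the individual lower bounds by dividing out the remaining factors using the a priori upper bounds from Lemma~\ref{lem:bounds on u}. The only cosmetic differences are your choice $R_2=\eta/2$ instead of $\eta$ and your explicit form $(1+M^2/\lambda_1)^{Ns}$ for the correction bound, both harmless.
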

\begin{proof}
Several positive constants depending on a priori data will be denoted
by $C$.

\emph{First case -- Assumption \eqref{eq:sigma equal 0-multi}}. Take
$x\in\Omega'$ and define $g_{x}^{j}$ as in \eqref{eq:g_x}, where
$N$ is given by \eqref{eq:new assumptions on A}. Set
\[
g_{x}=\prod_{j=1}^{r}g_{x}^{j}.
\]
By Lemma~\ref{lem:poles} the map $g_{x}$ is holomorphic in $B(0,K_{max})$
and $\max_{B(0,K_{max})}\bigl|g_{x}\bigr|\le C$. Moreover, $\left|g_{x}(0)\right|\ge C_{0}^{r}$
by \eqref{eq:assumption k=00003D0}. Therefore, by Lemma~\ref{lem:momm-ellipse}
with $r=K_{min}$ and $R_{1}=R_{2}=K_{max}$ there exists $\k_{x}\in[r,R]=\A$
such that $\bigl|g_{x}(\k_{x})\bigr|\ge C$. As a consequence, in
view of \eqref{eq:g_x} we obtain
\[
\bigl|\prod_{j=1}^{r}\theta_{\k_{x}}^{j}(x)\bigr|=\bigl|g_{x}(\k_{x})\bigr|\prod_{l=1}^{N}\frac{\lambda_{l}^{rs}}{\left|\lambda_{l}-\k_{x}^{2}\right|^{rs}}\ge C,
\]
since $\lambda_{l}\ge\lambda_{1}\ge C(\Omega,\maxa)$ and $\left|\lambda_{l}-\k_{x}^{2}\right|\le2M^{2}$.
The result now follows from Lemma~\ref{lem:bounds on u}, part 1a.

\emph{Second case -- Assumption \eqref{eq:sigma bounds below-multi}}.
Take $x\in\Omega'$ and define 
\[
g_{x}(\k)=\prod_{j=1}^{r}\theta_{\k}^{j}(x),\quad\k\in D.
\]
In view of Lemma~\ref{lem:zeta j holomorphic}, the map $g_{x}$
is holomorphic in $D$ and by Lemma~\ref{lem:bounds on u}, part
2a, $\max_{B(0,M)\cap D}\bigl|g_{x}\bigr|\le C$. Moreover, $\left|g_{x}(0)\right|\ge C_{0}^{r}$
by \eqref{eq:assumption k=00003D0}. Therefore, by Lemma~\ref{lem:momm-ellipse}
with $r=K_{min}$, $R_{1}=K_{max}$, and $R_{2}=\eta$ there exists
$\k_{x}\in\A$ such that $\bigl|g_{x}(\k_{x})\bigr|\ge C$. The result
now follows from Lemma~\ref{lem:bounds on u}, part 2a.
\end{proof}
We are now ready to prove Theorem~\ref{thm:quantitative bounds-helmholtz-zeta}. 
\begin{proof}[Proof of Theorem~\ref{thm:quantitative bounds-helmholtz-zeta}]
Different positive constants depending on a priori data will be denoted
by $C$ or $Z$.

If \eqref{eq:sigma equal 0-multi} holds true, by Lemma~\ref{lem:A'}
we can assume \eqref{eq:new assumptions on A}. Thus, in view of Lemma~\ref{lem:almost there},
for every $x\in\Omega'$ there exists $\k_{x}\in\A$ such that
\[
\bigl|\theta_{\k_{x}}^{j}(x)\bigr|\ge C,\qquad j=1,\dots,r.
\]
Thus, by Lemma~\ref{lem:bounds on u}, parts 1b and 2b, there exists
$Z>0$ such that 
\begin{equation}
\bigl|\theta_{\k}^{j}(x)\bigr|\ge C,\qquad\k\in[\k_{x}-Z,\k_{x}+Z]\cap\A,\; j=1,\dots,r.\label{eq:2nd bound}
\end{equation}
Recall that $\A=[K_{min},K_{max}]$ and that $\k_{i}^{(n)}=K_{min}+\frac{(i-1)}{(n-1)}(K_{max}-K_{min})$.
It is trivial to see that there exists $P=P(Z,\left|\A\right|)\in\N$
such that
\begin{equation}
\A\subseteq\bigcup_{p=1}^{P}I_{p},\qquad I_{p}=[K_{min}+(p-1)Z,K_{min}+pZ].\label{eq:partition}
\end{equation}
Choose now $n\in\N$ big enough so that for every $p=1,\dots,P$ there
exists $i_{p}=1,\dots,n$ such that $\k(p):=\k_{i_{p}}^{(n)}\in I_{p}$.
Note that $n$ depends on $Z$ and $\left|\A\right|$ only.

Take now $x\in\Omega'$. Since $\left|[\k_{x}-Z,\k_{x}+Z]\right|=2Z$
and $\left|I_{p}\right|=Z$, in view of \eqref{eq:partition} there
exists $p_{x}=1,\dots,P$ such that $I_{p_{x}}\subseteq[\k_{x}-Z,\k_{x}+Z]$.
Therefore $\k(p_{x})\in[\k_{x}-Z,\k_{x}+Z]\cap\A$, whence by \eqref{eq:2nd bound}
there holds $\bigl|\theta_{\k(p_{x})}^{j}(x)\bigr|\ge C$ for all
$j=1,\dots,r$. Recalling the definition of $\theta^{j}$ this implies
\begin{equation}
\bigl|\zeta^{j}\bigl(u_{\k(p_{x})}^{1},\dots,u_{\k(p_{x})}^{b}\bigr)(x)\bigr|\ge C,\qquad j=1,\dots,r.\label{eq:last}
\end{equation}
Define now $\Omega'_{\k}=\{x\in\Omega':\min_{j}|\zeta^{j}\bigl(u_{\k}^{1},\dots,u_{\k}^{b}\bigr)(x)|>C/2\}$.
By \eqref{eq:last} this gives an open cover $\Omega'=\cup_{\k\in K^{(n)}}\Omega'_{\k}$,
since $\k(p_{x})\in K^{(n)}$. As a consequence, $K^{(n)}\times\{\phi_{1},\dots,\phi_{b}\}$
is $(\zeta,C/2)$-complete in $\Omega'$ (Definition~\ref{def:zeta-complete}).
The theorem is proved.
\end{proof}

\subsection{\label{sub:Proof-of-Corollary}\texorpdfstring{$(\zeta_{\det},C)$}{(z,C)}-complete sets
of measurements}

We now show how to apply Theorem~\ref{thm:quantitative bounds-helmholtz-zeta}
to the particular case of $(\zeta_{\det},C)$-complete sets. 
\begin{proof}[Proof of Corollary~\ref{cor:det-complete}]
The main point of the proof of this theorem is satisfying \eqref{eq:assumption k=00003D0} for \textcolor{black}{$\zeta=\zeta_{\det}$}.
Then, the result will follow immediately from Theorem~\ref{thm:quantitative bounds-helmholtz-zeta}. 

\emph{Case $d=2$.} It is sufficient to prove that
\[
\bigl|\zeta_{\det}^{j}\bigl(u_{0}^{1},u_{0}^{2},u_{0}^{3}\bigr)(x)\bigr|\ge\p_{0},\quad j=1,\dots,3,\; x\in\Omega'
\]
for some $C_{0}>0$ depending on $\Omega$, $\Omega'$, $\maxa$,
$\alpha$ and $\left\Vert a\right\Vert _{\Cl^{0,\alpha}(\overline{\Omega};\R^{2\times2})}$.

Several positive constants depending on $\Omega$, $\Omega'$, $\maxa$,
$\alpha$ and $\left\Vert a\right\Vert _{\Cl^{0,1}(\overline{\Omega};\R^{2\times2})}$
will be denoted by $C$. Recall that, setting ``$x_{0}=1$'', we
have
\[
\left\{ \begin{array}{c}
-\div(a\nabla u_{0}^{i})=0\qquad\text{in }\bo,\\
u_{0}^{i}=x_{i-1}\qquad\text{on }\bo.
\end{array}\right.
\]
Since $u_{0}^{1}=1$, the thesis is equivalent to show that
\begin{equation}
\left|\gamma(x)\right|:=\left|\det\begin{bmatrix}\nabla u_{0}^{2} & \nabla u_{0}^{3}\end{bmatrix}(x)\right|\ge C,\quad x\in\Omega'.\label{eq:thesis zeta^2}
\end{equation}
Fix now $x\in\Omega'$. Since $\Omega$ is convex, in view of Proposition~\ref{prop:aless-magn}
we have $\beta:=\left|\nabla u_{0}^{2}(x)\right|\ge C$. Set $\nabla^{\perp}u_{0}^{2}=(-\partial_{2}u_{0}^{2},\partial_{1}u_{0}^{2})$.
Therefore $\{\beta^{-1}\nabla u_{0}^{2}(x),\beta^{-1}\nabla^{\perp}u_{0}^{2}(x)\}$
is an orthonormal basis of $\R^{2}$. As a consequence there holds
\[
\nabla u_{0}^{3}(x)=(\nabla u_{0}^{3}(x)\cdot\beta^{-2}\nabla u_{0}^{2}(x))\nabla u_{0}^{2}(x)+(\nabla u_{0}^{3}(x)\cdot\beta^{-2}\nabla^{\perp}u_{0}^{2}(x))\nabla^{\perp}u_{0}^{2}(x).
\]
Setting $\xi=\nabla u_{0}^{3}(x)\cdot\beta^{-2}\nabla u_{0}^{2}(x)$
and $v=u_{0}^{3}-\xi u_{0}^{2}$, since $\gamma(x)=\nabla u_{0}^{3}(x)\cdot\nabla^{\perp}u_{0}^{2}(x)$
we have $\beta^{-2}\gamma(x)\nabla^{\perp}u_{0}^{2}(x)=\nabla v(x)$,
whence
\begin{equation}
\left|\gamma(x)\right|=\beta\left|\nabla v(x)\right|.\label{eq:gamma}
\end{equation}
Since $\Omega$ is convex and $v$ is the solution to
\[
\left\{ \begin{array}{c}
-\div(a\nabla v)=0\qquad\text{in }\bo,\\
v=x_{2}-\xi x_{1}\qquad\text{on }\bo,
\end{array}\right.
\]
we can apply again Proposition~\ref{prop:aless-magn} and obtain
$\left|\nabla v(x)\right|\ge C$ (note that $\left|\xi\right|\le C$
by standard elliptic regularity theory -- see Proposition~\ref{pro: Helmholtz regularity}).
As a consequence, in view of \eqref{eq:gamma} we obtain \eqref{eq:thesis zeta^2}.

\emph{Case $d=3$. }For simplicity, suppose first that $a=\hat{a}$.
Thus $u_{0}^{i}=x_{i-1}$ for $i=1,\dots,4$ (``$x_{0}=1$''). Therefore
\eqref{eq:assumption k=00003D0} is immediately satisfied with $C_{0}=1$.
The general case where $\left\Vert a-\hat{a}\right\Vert _{\Cl^{0,\alpha}}\le\delta$
can be handled by using a standard continuity argument. More precisely,
we obtain $\left\Vert u_{0}^{i}-x_{i-1}\right\Vert _{\Cl^{1}}\le c\delta$,
and so \eqref{eq:assumption k=00003D0} is satisfied provided that
$\delta$ is chosen small enough (for details, see \cite{alberti2013multiple}).
\end{proof}

\subsection{\label{sub:Proof-of-Theorem-2}Maxwell's equations}

As in the case of the Helmholtz equation, the basic tool to prove
Theorem~\ref{thm:quantitative bounds-complex maxwell-zeta} is the
holomorphicity of the map $\k\mapsto(E_{\k}^{i},H_{\k}^{i})\in C^{\order}$.
\begin{prop}[\cite{albertigsII}]
\label{pro: maxwell holomorphic}Under the assumptions of Theorem~\ref{thm:quantitative bounds-complex maxwell-zeta},
the map
\[
\{\k\in B(0,M):|\Im\k|<\eta\}\longrightarrow\Cl^{\order}(\overline{\Omega};\C^{6}),\quad\k\longmapsto(E_{\k}^{i},H_{\k}^{i})
\]
is holomorphic, where $\eta>0$ is given by Proposition~\ref{prop:well-posedness-maxwell}.
\end{prop}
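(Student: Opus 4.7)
The plan is to recast the Maxwell system as a holomorphic operator equation at the Hilbert space level, and then bootstrap to $C^{\order}(\overline{\Omega};\C^{6})$-valued holomorphicity via the a priori estimates of Proposition~\ref{prop:regularity-maxwell}.

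First, I would reduce to homogeneous boundary data: choose a lift $\Phi_{i}\in W^{\order+1,p}(\Omega;\C^{3})$ satisfying $\Phi_{i}\times\nu=\phi_{i}\times\nu$ on $\bo$ independently of $\k$, and set $\tilde{E}_{\k}^{i}=E_{\k}^{i}-\Phi_{i}$. The pair $(\tilde{E}_{\k}^{i},H_{\k}^{i})$ then solves a Maxwell system whose tangential boundary condition is homogeneous and whose right-hand side is polynomial in $\k$. I would encode this reduced system as a linear equation $T_{\k}u_{\k}=f_{\k}$ in the Hilbert space $V=\Hocurl\times\Hmu$, where $T_{\k}\in\mathcal{B}(V,V')$ and $f_{\k}\in V'$ depend polynomially on $\k$, and $T_{\k}$ is invertible on the strip $\{|\Im\k|<\eta\}$ by Proposition~\ref{prop:well-posedness-maxwell}.

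The $V$-valued holomorphicity of $\k\mapsto u_{\k}$ then follows from the Neumann-type identity
\[
T_{\k}^{-1}-T_{\k_{0}}^{-1}=T_{\k}^{-1}(T_{\k_{0}}-T_{\k})T_{\k_{0}}^{-1},
\]
which, together with continuity of $\k\mapsto T_{\k}$ in operator norm, yields $\partial_{\k}T_{\k}^{-1}=-T_{\k}^{-1}(\partial_{\k}T_{\k})T_{\k}^{-1}$; composing with the polynomial $f_{\k}$ shows that $u_{\k}=T_{\k}^{-1}f_{\k}$ is holomorphic in $V$.

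Finally, I would upgrade to $C^{\order}$. Formally differentiating the system in $\k$ identifies a candidate derivative $u'_{\k_{0}}\in V$ as the solution to an analogous Maxwell system, with source $\partial_{\k}f_{\k}|_{\k=\k_{0}}$ and coefficients frozen at $\k_{0}$. Applying Proposition~\ref{prop:regularity-maxwell} to the difference $v_{\tau}:=(u_{\k_{0}+\tau}-u_{\k_{0}})/\tau-u'_{\k_{0}}$, which in turn solves a Maxwell-type system whose source tends to zero in $V'$ as $\tau\to 0$, gives convergence $v_{\tau}\to 0$ in $C^{\order}(\overline{\Omega};\C^{6})$, that is, holomorphicity at the desired level. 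The main obstacle is precisely this last regularity upgrade: one must verify that the hypotheses of Proposition~\ref{prop:regularity-maxwell} transfer to the derivative equation satisfied by $u'_{\k}$ and that the regularity constants are uniform for $\k$ in compact subsets of the strip. Both points are handled by the polynomial dependence of $T_{\k}$ on $\k$ and by the Sobolev regularity of $\mu,\epsilon,\sigma$ assumed in~\eqref{eq:assumption_bounds-maxwell-multi}.
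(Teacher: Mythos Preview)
The paper does not actually prove this proposition; it is quoted from \cite{albertigsII} (and the parallel Helmholtz result, Proposition~\ref{pro: Helmholtz holomorphic}, is likewise deferred to \cite{alberti2013multiple,albertidphil}). Your outline is the standard route and is consistent with what those references do: lift to homogeneous boundary data, rewrite the system as $T_{\k}u=f_{\k}$ with $T_{\k}$ and $f_{\k}$ polynomial in $\k$, obtain $V$-valued holomorphicity of $\k\mapsto T_{\k}^{-1}f_{\k}$ via the resolvent identity, and then upgrade to $C^{\order}$ by applying the regularity estimates to the difference-quotient equation.

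One point deserves sharpening. Proposition~\ref{prop:regularity-maxwell}, as stated in this paper, controls $\left\Vert(E_{\k},H_{\k})\right\Vert_{C^{\order}}$ only for solutions of the \emph{source-free} boundary value problem~\eqref{eq:combined-maxwell}. The equations satisfied by $u'_{\k_{0}}$ and by $v_{\tau}$ carry genuine volume sources (coming from $\partial_{\k}T_{\k}\,u_{\k}$ and from the remainder in the difference quotient), so you cannot invoke that proposition verbatim. You need the version of the $W^{\order+1,p}\hookrightarrow C^{\order}$ Maxwell regularity estimate that allows a right-hand side in $W^{\order,p}$; this is available in the underlying theory \cite{alberti-capdeboscq,albertigsII} but is not the statement recorded here. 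You already flag this as the ``main obstacle'', which is correct --- just make explicit that you are appealing to the more general estimate rather than to Proposition~\ref{prop:regularity-maxwell} as written.
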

The rest of the proof of Theorem~\ref{thm:quantitative bounds-complex maxwell-zeta}
is very similar to the proof of Theorem~\ref{thm:quantitative bounds-helmholtz-zeta}
in the case where \eqref{eq:sigma bounds below-multi} holds true.
The results of $\S$~\ref{sub:Maxwell's-equations-appendix} have
to be used in place of the corresponding results of $\S$~\ref{sub:The-Helmholtz-equation-appendix}.
If $\sigma=\hat{\sigma}$, no further investigation is needed. If
$\left\Vert \sigma-\so\right\Vert _{W^{\order+1,p}}\le\delta$, an
argument similar to the one given in the proof of Corollary~\ref{cor:det-complete}
in the 3D case can be used \cite{albertigsII}. The details are omitted.

\section{\label{sec:Applications-to-hybrid}Applications to hybrid imaging
inverse problems}

In this section we apply the theory presented so far to three examples
of hybrid imaging problems. The reader is referred to \cite{albertigsII,albertidphil,ammari2014admittivity}
for other relevant examples.

\subsection{\label{sub:Applications-to-Microwave}Microwave imaging by ultrasound
deformation}

We consider the hybrid problem arising from the combination of microwaves
and ultrasounds that was introduced in \cite{cap2011}. The problem
is modelled by the Helmholtz equation \eqref{eq:helmholtz-multi}.
In addition to the previous assumptions, we suppose that $a$ is scalar-valued
and $ $$\sigma=0$. In microwave imaging, $a$ is the inverse of
the magnetic permeability, $\epsilon$ is the electric permittivity
and $\A=[K_{\text{min}},K_{\text{max}}]$ represent the admissible
frequencies in the microwave regime.

Given a set of measurements $K\times\left\{ \phi_{i}\right\} $ we
consider internal data of the form
\[
e_{\k}^{ij}=\epsi\, u_{\k}^{i}u_{\k}^{j},\qquad E_{\k}^{ij}=a\,\nabla u_{\k}^{i}\cdot\nabla u_{\k}^{j}.
\]
For simplicity, we denote $e_{\k}=(e_{\k}^{ij})_{ij}$ and similarly
for $E$. These internal energies have to be considered as known functions
in some subdomain $\Omega'\Subset\Omega$.

We need to choose a suitable set $K\times\left\{ \phi_{i}\right\} $
and  find $a$ and $\epsilon$ in $\Omega'$ from the knowledge of
$e_{\k}^{ij}$ and $E_{\k}^{ij}$ in $\Omega'$. This can be achieved
via two reconstruction formulae for $a/\epsilon$ and $\epsilon$,
respectively. Their applicability is guaranteed if $K\times\left\{ \phi_{i}\right\} $
is $(\zeta_{\times},C)$-complete, where $\zeta_{\times}\colon\Cl^{1}(\overline{\Omega};\C){}^{3}\longrightarrow\Cl(\overline{\Omega};\C)^{2}$
is given by
\[
\zeta_{\times}(u^{1},u^{2},u^{3})=\begin{cases}
\bigl(u^{1},\nabla u^{2}\times\nabla u^{3}\bigr) & \text{if }d=2,\\
\bigl(u^{1},(\nabla u^{2}\times\nabla u^{3})_{3}\bigr) & \text{if }d=3.
\end{cases}
\]
Note that $\zeta_{\times}^{2}=\zeta_{\det}^{2}$ in two dimensions,
but if $d=3$ then only two linearly independent gradients are required
with $\zeta_{\times}^{2}$. Thus, $(\zeta_{\times},C)$-complete sets
can be constructed by arguing as in Corollary~\ref{cor:det-complete}.
In particular, under the assumptions of Corollary~\ref{cor:det-complete},
a suitable choice for the boundary conditions is $\phi_{1}=1$, $\phi_{2}=x_{1}$
and $\phi_{3}=x_{2}$. The reconstruction algorithm with the use of
multiple frequencies was detailed in \cite{alberti2013multiple}.
Only the main steps are presented here.

Let $K\times\left\{ \phi_{1},\phi_{2},\phi_{3}\right\} $ be a $(\zeta_{\times},C)$-complete
set of measurements in $\Omega'$. As in Definition~\ref{def:zeta-complete},
this gives an open cover $\Omega'=\cup_{\k\in K\cap D}\Omega'_{\k}$
such that
\[
\bigl|u_{\k}^{1}\bigr|\ge C,\quad\bigl|\nabla u_{\k}^{2}\times\nabla u_{\k}^{3}\bigr|\ge C\quad\text{in }\Omega'_{\k}.
\]
These constraints allow to apply the following reconstruction procedure.
\begin{prop}[\cite{alberti2013multiple}]
\label{pro:formulaaqandq} Suppose that for all $\k\in K\cap D$
and $i=1,2,3$, $\left\Vert e_{\k}^{ii}\right\Vert _{L^{\infty}(\Omega')}\le F$
and $\left\Vert E_{\k}^{ii}\right\Vert _{L^{\infty}(\Omega')}\le F$
for some $F>0$.
\begin{enumerate}
\item There exists $c>0$ depending on $\maxa$ and $F$ such that for any
$\k\in K\cap D$ 
\[
\left|\nabla(e_{\k}/\tr(e_{\k}))\right|_{2}^{2}\ge cC^{6}\qquad\text{in }\Omega_{\k}'
\]
and $a/\epsilon$ is given in terms of the data by
\[
\frac{a}{\epsi}=2\,\frac{\tr(e_{\k})\,\tr(E_{\k})-\tr(e_{\k}E_{\k})}{\tr(e_{\k})^{2}\left|\nabla(e_{\k}/\tr(e_{\k}))\right|_{2}^{2}}\qquad\text{in }\Omega_{\k}'.
\]

\item Moreover, if $\epsilon\in\Honer$ then $\log\epsilon$ is the unique
solution to the problem
\[
\left\{ \begin{array}{l}
-\div\left(\frac{a}{\epsilon}\,\sum_{\k}e_{\k}^{11}\,\nabla u\right)=-\div\left(\frac{a}{\epsilon}\,\nabla\left(\sum_{\k}e_{\k}^{11}\right)\right)+2\sum_{\k}\left(E_{\k}^{11}-\k e_{\k}^{11}\right)\quad\text{in }\Omega',\\
u=\log\epsilon_{|\partial\Omega'}\qquad\text{on \ensuremath{\partial\Omega}}'.
\end{array}\right.
\]

\end{enumerate}
\end{prop}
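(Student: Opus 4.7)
\emph{Plan overview.} Both parts reduce to pointwise algebraic identities derived from the Helmholtz equation and the explicit bilinear structure of $e_\k$ and $E_\k$; the $(\zeta_\times,C)$-completeness hypothesis enters only to guarantee that certain denominators are bounded away from zero.

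\emph{Proof of (1).} The starting observation is the rank-one structure $e_\k = \epsilon\, U_\k U_\k^{\top}$, where $U_\k = (u_\k^1, u_\k^2, u_\k^3)^{\top}$, so that $\tr(e_\k) = \epsilon|U_\k|^2$ and $e_\k/\tr(e_\k) = v_\k v_\k^{\top}$ with $v_\k = U_\k/|U_\k|$ is independent of $\epsilon$. Expanding $\tr(E_\k) = a|\nabla U_\k|_F^2$ and, using $\sum_i u_\k^i\nabla u_\k^i = \tfrac12\nabla|U_\k|^2$, also $\tr(e_\k E_\k) = \tfrac{a\epsilon}{4}|\nabla|U_\k|^2|^2$, a Lagrange-type identity on the columns of $\nabla U_\k$ gives
\[
\tr(e_\k)\tr(E_\k) - \tr(e_\k E_\k) = a\epsilon \sum_{i<j}|u_\k^i\nabla u_\k^j - u_\k^j\nabla u_\k^i|^2,
\]
while a direct computation (exploiting $v_\k\cdot\partial_k v_\k = 0$, which follows from $|v_\k|\equiv 1$) yields
\[
\tr(e_\k)^2\,|\nabla(e_\k/\tr(e_\k))|_2^2 = 2\epsilon^2\sum_{i<j}|u_\k^i\nabla u_\k^j - u_\k^j\nabla u_\k^i|^2.
\]
Dividing produces the claimed formula for $a/\epsilon$. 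For the lower bound I would use $|\nabla(v_\k v_\k^{\top})|_2^2 = 2|Dv_\k|^2$ and combine the transversality $|\nabla u_\k^2\times\nabla u_\k^3|\ge C$ with $|u_\k^1|\ge C$ and the upper bound $|U_\k|^2 \le c(F,\maxa)$ (obtained from $e_\k^{ii}\le F$ and $\epsilon\ge\maxa^{-1}$) to extract the factor $C^6$.

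\emph{Proof of (2).} Multiplying $-\div(a\nabla u_\k^1) = \k^2\epsilon u_\k^1$ by $u_\k^1$ and applying the product rule yields the pointwise identity
\[
\div(a\, u_\k^1\nabla u_\k^1) = a|\nabla u_\k^1|^2 - \k^2\epsilon(u_\k^1)^2 = E_\k^{11} - \k^2 e_\k^{11}.
\]
Using $(u_\k^1)^2 = e_\k^{11}/\epsilon$ one rewrites $a\, u_\k^1\nabla u_\k^1 = \tfrac12\tfrac{a}{\epsilon}(\nabla e_\k^{11} - e_\k^{11}\nabla\log\epsilon)$; summing over $\k\in K$ then produces exactly the elliptic equation of the statement with $u=\log\epsilon$. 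Uniqueness for the associated Dirichlet problem follows from standard $H^1$ theory, provided the coefficient $\tfrac{a}{\epsilon}\sum_\k e_\k^{11}$ is uniformly elliptic on $\Omega'$; this is guaranteed by the cover $\Omega' = \bigcup_{\k}\Omega'_\k$, since for every $x\in\Omega'$ at least one term satisfies $e_\k^{11}(x) = \epsilon(x)|u_\k^1(x)|^2 \ge \maxa^{-1}C^2$.

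\emph{Main obstacle.} The principal difficulty is the quantitative lower bound in part (1): translating the geometric condition $|\nabla u_\k^2\times\nabla u_\k^3|\ge C$ into an explicit lower bound for the Frobenius norm $|Dv_\k|^2$ of the differential of the map $x\mapsto U_\k(x)/|U_\k(x)|$ requires decomposing the columns of $\nabla U_\k$ along and orthogonal to $U_\k$ and keeping track of how the wedge interacts with the normalisation by $|U_\k|$; algebraic cancellations between the three terms $|u_\k^i\nabla u_\k^j - u_\k^j\nabla u_\k^i|^2$ must be controlled using $|u_\k^1|\ge C$. Once this geometric step is carried out, everything else is routine pointwise algebra in the Helmholtz framework.
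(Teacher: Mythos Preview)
The paper does not include a proof of this proposition; it is quoted from \cite{alberti2013multiple} with the remark that ``only the main steps are presented here.'' So there is no in-paper argument to compare against, and I can only assess correctness.

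Your argument is essentially correct. In part~(1) the rank-one identity $e_\k=\epsilon U_\k U_\k^{\top}$, the Lagrange identity for $\tr(e_\k)\tr(E_\k)-\tr(e_\k E_\k)$, and the computation $|\nabla(v_\k v_\k^{\top})|_2^2=2|Dv_\k|^2=\tfrac{2}{|U_\k|^4}\sum_{i<j}|u_\k^i\nabla u_\k^j-u_\k^j\nabla u_\k^i|^2$ are all valid, and dividing gives the stated formula for $a/\epsilon$. For the lower bound you flagged as the ``main obstacle'', the cleanest way to close it is the algebraic identity
\[
u_1^{2}\,\nabla u_2\times\nabla u_3=(u_1\nabla u_2-u_2\nabla u_1)\times(u_1\nabla u_3-u_3\nabla u_1)+u_1\,\nabla u_1\times(u_2\nabla u_3-u_3\nabla u_2),
\]
which, together with $|u_\k^1|\ge C$, $|\nabla u_\k^2\times\nabla u_\k^3|\ge C$ and the upper bounds $|u_\k^i|^2,|\nabla u_\k^i|^2\le F\Lambda$, gives $C^{3}\le S+F\Lambda\sqrt{S}$ with $S=\sum_{i<j}|u_\k^i\nabla u_\k^j-u_\k^j\nabla u_\k^i|^2$, hence $S\ge c(F,\Lambda)C^{6}$ as claimed. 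Part~(2) is also correct; note that your computation produces $E_\k^{11}-\k^{2}e_\k^{11}$ on the right-hand side, which is what the Helmholtz equation actually gives (the ``$\k$'' in the displayed statement appears to be a typographical slip for ``$\k^{2}$'').
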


\subsection{\label{sub:Thermo-acoustic-tomography}Quantitative thermo-acoustic
tomography (QTAT)}

In thermo-acoustic tomography \cite{patch-scherzer-2007}, the combination
of acoustic waves and microwaves is carried out in a different way,
if compared to the hybrid problem studied in $\S$~\ref{sub:Applications-to-Microwave}.
The absorption of the microwaves inside the object results in local
heating, and so in a local expansion of the medium. This creates acoustic
waves that propagate outside the domain, where they can be measured.
In a first step \cite{Mathematics-of-thermoacoustic-tomography,bal2011quantitative},
it is possible to measure the amount of absorbed radiation, which
is given by
\[
e_{\k}^{ii}(x)=\sigma(x)\left|u_{\k}^{i}(x)\right|^{2},\qquad x\in\Omega,
\]
where $\Omega\subseteq\R^{d}$ is a smooth bounded domain, $d=2,3$,
$u_{\k}^{i}$ is the unique solution to
\begin{equation}
\left\{ \begin{array}{l}
-\Delta u_{\k}^{i}-(\k^{2}+\ii\k\sigma)u_{\k}^{i}=0\qquad\text{in \ensuremath{\Omega},}\\
u_{\k}^{i}=\phi_{i}\qquad\text{on \ensuremath{\partial\Omega},}
\end{array}\right.\label{eq:physical_modeling-helmholtz-thermo}
\end{equation}
and $\sigma\in L^{\infty}(\Omega;\R)$ satisfies \eqref{eq:sigma bounds below-multi}.
The problem of QTAT is to reconstruct $\sigma$ from the knowledge
of $e_{\k}^{ij}$, where $e_{\k}^{ij}$ represent the polarised data
\[
e_{\k}^{ij}(x)=\sigma(x)u_{\k}^{i}(x)\overline{u_{\k}^{j}(x)},\qquad x\in\Omega.
\]

We shall see that it is possible to reconstruct $\sigma$ if $K\times\{\phi_{1},\dots,\phi_{d+1}\}$
is a $(\zeta'_{\det},C)$-complete set, where $\zeta'_{\det}\colon\Cl^{1}(\overline{\Omega};\C)^{d+1}\to\Cl(\overline{\Omega};\C)^{2}$
is given by
\[
\zeta'_{\det}(u^{1},\dots,u^{d+1})=\Bigl(u^{1},\det\begin{bmatrix}u^{1} & \cdots & u^{d+1}\\
\nabla u^{1} & \cdots & \nabla u^{d+1}
\end{bmatrix}\Bigr).
\]
Since $a=1$, the construction of $(\zeta'_{\det},C)$-complete sets
of measurements can be easily achieved with the multi-frequency approach
in any dimensions.
\begin{prop}
\label{prop:zeta_primo_det}Assume that $a=\epsilon=1$ and that $\sigma\in L^{\infty}(\Omega;\R)$
satisfies \eqref{eq:sigma bounds below-multi}. Then there exist $C>0$
and $n\in\N$ depending on $\Omega$, $\maxa$, $M$ and $\left|\A\right|$
only such that
\[
K^{(n)}\times\{1,x_{1},\dots,x_{d}\}
\]
is a $(\zeta'_{\det},C)$-complete set of measurements in $\Omega$.\end{prop}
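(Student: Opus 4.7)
The plan is simply to verify the hypotheses of Theorem~\ref{thm:quantitative bounds-helmholtz-zeta} for the map $\zeta'_{\det}$, the boundary data $\phi_1=1$, $\phi_2=x_1,\dots,\phi_{d+1}=x_d$, and the coefficients $a=\epsilon=1$, $\sigma$ satisfying \eqref{eq:sigma bounds below-multi}, with regularity index $\order=1$.

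First I would check that $\zeta'_{\det}$ fits the framework \eqref{eq:assumptions map zed}. The first component is the (continuous, linear) projection $(u^1,\dots,u^{d+1})\mapsto u^1$, and the second component is a polynomial in the entries of $(u^i,\nabla u^i)_i$ of total degree $d+1$. Both are holomorphic by Lemma~\ref{lem:analytic functions}, and the estimates \eqref{eq:assumptions map zed-a} and \eqref{eq:assumptions map zed-b} follow with $s=d+1$ and a universal constant $c_{\zeta}$. The regularity assumption \eqref{eq:regularity assumption-helmh-multi} with $\order=1$ is trivially satisfied, since $a=\epsilon=1$, $\sigma\in L^{\infty}$, and the chosen $\phi_i$ are polynomials, so $\Vert\phi_i\Vert_{\Cl^{1,\alpha}(\overline{\Omega};\C)}$ is controlled by $\Omega$ alone.

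The crucial step is verifying assumption \eqref{eq:assumption k=00003D0} at $\k=0$. The Dirichlet problem at $\k=0$ becomes $-\Delta u_0^{i}=0$ in $\Omega$ with $u_0^{i}=\phi_i$ on $\partial\Omega$, so by uniqueness for harmonic functions one has $u_0^{1}=1$ and $u_0^{i+1}=x_i$ for $i=1,\dots,d$. Then $|\zeta'^{1}_{\det}(u_0^1,\dots,u_0^{d+1})|=1$ everywhere, while the second component reads
\[
\det\begin{bmatrix} 1 & x_1 & \cdots & x_d \\ 0 & e_1 & \cdots & e_d \end{bmatrix},
\]
where $e_i$ denotes the $i$-th canonical vector of $\R^d$; this matrix is upper triangular with unit diagonal, so its determinant is identically $1$. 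Thus \eqref{eq:assumption k=00003D0} holds in $\Omega'=\Omega$ with $C_0=1$.

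Theorem~\ref{thm:quantitative bounds-helmholtz-zeta} then yields $C>0$ and $n\in\N$ such that $K^{(n)}\times\{1,x_1,\dots,x_d\}$ is $(\zeta'_{\det},C)$-complete in $\Omega$. Inspecting the list of parameters on which the constants in Theorem~\ref{thm:quantitative bounds-helmholtz-zeta} depend, and using that here $\Vert a\Vert_{\Cl^{0,\alpha}}$, $\Vert(\epsilon,\sigma)\Vert_{L^{\infty}}$, $\Vert\phi_i\Vert_{\Cl^{1,\alpha}}$, $C_{\zeta}$ and $C_0$ are all controlled by $\Omega$ and $\maxa$ only, we conclude that $C$ and $n$ depend only on $\Omega$, $\maxa$, $M$ and $|\A|$, as claimed. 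There is no genuine obstacle here, since the case $\sigma$ satisfying \eqref{eq:sigma bounds below-multi} avoids eigenvalue poles altogether and the $\k=0$ check reduces to an explicit triangular determinant.
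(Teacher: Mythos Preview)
Your proof is correct and follows exactly the same route as the paper: the paper's proof is the one-liner ``It follows immediately from Theorem~\ref{thm:quantitative bounds-helmholtz-zeta}, since the assumption $a=1$ yields \eqref{eq:assumption k=00003D0} with $C_{0}=1$,'' and you have simply spelled out why $u_0^i=x_{i-1}$ and why the resulting determinant equals $1$. The additional checks you perform (holomorphicity of $\zeta'_{\det}$, regularity of the data, tracking of the constants) are all sound and consistent with the framework.
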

\begin{proof}
It follows immediately from Theorem~\ref{thm:quantitative bounds-helmholtz-zeta},
since the assumption $a=1$ yields \eqref{eq:assumption k=00003D0}
with $C_{0}=1$.
\end{proof}
Let $K\times\{\phi_{1},\dots,\phi_{d+1}\}$ be a $(\zeta'_{\det},C)$-complete
set in $\Omega$. As in the previous subsection, this gives an open
cover $\Omega=\cup_{\k\in K}\Omega{}_{\k}$ such that for any $\k\in K$
and $x\in\Omega_{\k}$
\begin{equation}
\left|u_{\k}^{1}\right|(x)\ge C,\qquad\bigl|\det\begin{bmatrix}u_{\k}^{1} & \cdots & u_{\k}^{d+1}\\
\nabla u_{\k}^{1} & \cdots & \nabla u_{\k}^{d+1}
\end{bmatrix}(x)\bigr|\ge C.\label{eq:qtat}
\end{equation}
With this assumption, it is possible to apply the following reconstruction
formula in each subdomain $\Omega_{\k}$. We use the notation $\alpha_{\k}^{i}=e_{\k}^{i1}/e_{\k}^{11}$
and $A_{\k}=\begin{bmatrix}\nabla\alpha_{\k}^{2} & \cdots & \nabla\alpha_{\k}^{d+1}\end{bmatrix}$:
these quantities are well defined if \eqref{eq:qtat} is satisfied.
\begin{prop}[{\cite[Theorem 3.3]{ammari2012quantitative}}]
\label{prop:formula_thermo-acoustic} Assume that \eqref{eq:qtat}
holds in a subdomain $\tilde{\Omega}\subseteq\Omega$. There exists
$c>0$ depending on $\Omega$, $\maxa$ and $M$ such that $\left|\det A_{\k}\right|\ge cC$
in $\tilde{\Omega}$, and $\sigma$ can be reconstructed via
\[
\sigma=\frac{-\Re v_{\k}\cdot\Im v_{\k}+\div\Im v_{\k}}{2\k}\quad\text{in }\tilde{\Omega},
\]
where $v_{\k}=A_{\k}^{-1}\div(A_{\k})^{T}$ (the divergence acts on
each column).
\end{prop}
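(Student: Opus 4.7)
The starting observation is that the measured data $e_{\k}^{ij}=\sigma u_{\k}^{i}\overline{u_{\k}^{j}}$ determine the ratios $\alpha_{\k}^{i}=e_{\k}^{i1}/e_{\k}^{11}=u_{\k}^{i}/u_{\k}^{1}$ in $\tilde{\Omega}$, since the first condition in \eqref{eq:qtat} ensures $e_{\k}^{11}\geq\maxa^{-1}\p^{2}>0$. The lower bound $|\det A_{\k}|\geq c\p$ would then follow from the algebraic identity
\[
\det\begin{bmatrix}u_{\k}^{1} & \cdots & u_{\k}^{d+1}\\ \nabla u_{\k}^{1} & \cdots & \nabla u_{\k}^{d+1}\end{bmatrix}=(u_{\k}^{1})^{d+1}\det A_{\k},
\]
obtained by subtracting $\alpha_{\k}^{i}$ times the first column from the $i$-th column to annihilate the top row and factor $u_{\k}^{1}$ out of each of the resulting gradient columns. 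The second constraint in \eqref{eq:qtat} bounds the LHS below by $\p$, while standard elliptic estimates (Proposition~\ref{pro: Helmholtz regularity} applied to \eqref{eq:physical_modeling-helmholtz-thermo}) give an upper bound on $|u_{\k}^{1}|^{d+1}$ in terms of $\Omega$, $\maxa$, $M$ (the boundary data are fixed by Proposition~\ref{prop:zeta_primo_det}), which yields the claim.

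For the reconstruction formula, I would note that since $u_{\k}^{1}$ and $u_{\k}^{i}$ satisfy \eqref{eq:physical_modeling-helmholtz-thermo} with the same potential $\k^{2}+\ii\k\sigma$, a direct manipulation gives
\[
\div\bigl((u_{\k}^{1})^{2}\nabla\alpha_{\k}^{i}\bigr)=u_{\k}^{1}\Delta u_{\k}^{i}-u_{\k}^{i}\Delta u_{\k}^{1}=0,\qquad i=2,\dots,d+1,
\]
which expands to $\Delta\alpha_{\k}^{i}=-2(\nabla u_{\k}^{1}/u_{\k}^{1})\cdot\nabla\alpha_{\k}^{i}$. Writing these $d$ scalar identities simultaneously as the linear system $A_{\k}w=(\Delta\alpha_{\k}^{2},\dots,\Delta\alpha_{\k}^{d+1})^{T}=\div(A_{\k})^{T}$ and inverting (using the lower bound on $\det A_{\k}$), I would identify the unique solution as $v_{\k}=-2\,\nabla u_{\k}^{1}/u_{\k}^{1}=-2\nabla\log u_{\k}^{1}$. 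Notice that $\alpha_{\k}^{i}$, hence $A_{\k}$ and $\div(A_{\k})^{T}$, are computable from the data, so $v_{\k}$ is a genuinely measurable quantity.

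To extract $\sigma$, write $u_{\k}^{1}=e^{f+\ii g}$ with $f=\log|u_{\k}^{1}|$ and $g=\arg u_{\k}^{1}$, so that $\Re v_{\k}=-2\nabla f$ and $\Im v_{\k}=-2\nabla g$. The relation $\Delta u_{\k}^{1}/u_{\k}^{1}=-(\k^{2}+\ii\k\sigma)$, upon using $\Delta u_{\k}^{1}/u_{\k}^{1}=\Delta(f+\ii g)+|\nabla(f+\ii g)|^{2}$ and separating imaginary parts, gives $\Delta g+2\nabla f\cdot\nabla g=-\k\sigma$. Translating into $v_{\k}$ turns this into $-\div\Im v_{\k}/2+\Re v_{\k}\cdot\Im v_{\k}/2=-\k\sigma$, which is precisely the stated formula. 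I expect the main delicate point to be not the computations themselves but making the matrix identity $A_{\k}v_{\k}=\div(A_{\k})^{T}$ rigorous with the sign and transpose conventions consistent with the ordering convention used in the definition of $A_{\k}$; once this bookkeeping is pinned down, every step is a direct calculation from the PDE.
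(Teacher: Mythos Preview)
Your argument is correct and complete. The paper itself does not supply a proof of this proposition: it is quoted verbatim as \cite[Theorem~3.3]{ammari2012quantitative}, so there is no in-paper proof to compare against. What you have written is essentially the derivation given in that reference, and every step checks out: the column-operation identity $(u_\k^1)^{d+1}\det A_\k=\det\bigl[\begin{smallmatrix}u_\k^1&\cdots&u_\k^{d+1}\\ \nabla u_\k^1&\cdots&\nabla u_\k^{d+1}\end{smallmatrix}\bigr]$ is exactly right, the identity $\div\bigl((u_\k^1)^2\nabla\alpha_\k^i\bigr)=0$ follows from the fact that $u_\k^1$ and $u_\k^i$ solve the same equation, and the extraction of $\sigma$ from the imaginary part of $\Delta u_\k^1/u_\k^1$ is a clean computation.

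Two minor comments. First, in the linear system you should write $A_\k^{T}w=\div(A_\k)^{T}$ rather than $A_\k w=\div(A_\k)^{T}$ (with the column convention for $A_\k$, the $l$-th equation $\nabla\alpha_\k^{l+1}\cdot w=\Delta\alpha_\k^{l+1}$ reads $(A_\k^{T}w)_l=\Delta\alpha_\k^{l+1}$); you already flag this transpose bookkeeping as the only delicate point, and indeed the formula $v_\k=A_\k^{-1}\div(A_\k)^T$ in the statement should be read with the appropriate convention so that $v_\k=-2\nabla\log u_\k^1$. Second, when you write $|\nabla(f+\ii g)|^2$ you mean the complex bilinear square $\nabla h\cdot\nabla h$, not the hermitian norm; it would be cleaner to write $(\nabla h)\cdot(\nabla h)$ to avoid ambiguity. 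With those cosmetic fixes the proof is airtight.
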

In \cite{ammari2012quantitative}, in order to find suitable illuminations
to satisfy \eqref{eq:qtat}, complex geometric optics solutions are
used; these have several drawbacks, as it was discussed in Section~\ref{sec:Introduction}.
Proposition~\ref{prop:zeta_primo_det} gives a priori simple illuminations
and a finite number of frequencies to satisfy the desired constraints
in each $\Omega_{\k}$. By Proposition~\ref{prop:formula_thermo-acoustic},
$\sigma$ can be reconstructed everywhere thanks to the cover $\Omega=\cup_{\k\in K}\Omega{}_{\k}$.

\subsection{\label{sub:Reconstruction-of-first method-intro}Magnetic resonance
electrical impedance tomography (MREIT)}

In this example, we model the problem with the Maxwell's equations
\eqref{eq:combined i-maxwell}. Combining electric currents with an
MRI scanner, we can measure the internal magnetic fields $H_{\k}^{\phi_{i}}$
\cite{seo2011magnetic,seo-kim-etal-2012}. Assuming $\mu=1$, the
electromagnetic parameters to image are $\epsilon$ and $\sigma$,
and both are assumed isotropic. We present here only a sketch of the
use of the multi-frequency technique to this problem: full details
are given in \cite{albertigsII}.

We shall see that $(\zeta_{\det}^{M},C)$-complete sets are sufficient
to be able to image the electromagnetic parameters (Example~\ref{exa:zeta_det_maxwell}).
The construction of $(\zeta_{\det}^{M},C)$-complete sets is an immediate
consequence of Corollary~\ref{cor:zeta-complete}.
\begin{prop}
\label{prop:det-complete-maxwell}Assume that \eqref{eq:assumption_coefficients-maxwell-multi}
holds with $\order=0$ and let $\so\in\R^{3\times3}$ satisfy \eqref{eq:assumption_ell-maxwell-multi}.
There exist $\delta>0$ and $C>0$ depending on $\Omega$,  $\maxa$,
$\left|\A\right|$, $M$ and $\left\Vert (\mu,\epsi,\sigma)\right\Vert _{W^{1,p}(\overline{\Omega};\R^{3\times3})^{3}}$
such that if $\left\Vert \sigma-\so\right\Vert _{W^{1,p}(\overline{\Omega};\R^{3\times3})}\le\delta$
then 
\[
K^{(n)}\times\{\e_{1},\e_{2},\e_{3}\}
\]
is a $(\zeta_{\det}^{M},C)$-complete set of measurements.\end{prop}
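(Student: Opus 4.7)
The plan is to deduce Proposition~\ref{prop:det-complete-maxwell} as a direct application of Corollary~\ref{cor:zeta-complete} with the map $\zeta = \zeta_{\det}^{M}$ from Example~\ref{exa:zeta_det_maxwell}, since $\zeta_{\det}^{M}$ is independent of the magnetic field $H$. The candidate boundary data $\phi_{i} = \e_{i}$ are naturally expressible as gradients, namely $\phi_{i} = \nabla \psi_{i}$ with $\psi_{i}(x) = x_{i}$, and $\psi_{i}\in W^{\order+2,p}(\Omega;\C)$ trivially since $\order = 0$.

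First, I would check the admissibility of the boundary conditions: since each $\phi_{i}=\e_{i}$ is a constant vector field, $\curl\phi_{i}=0$ in $\Omega$, and so condition~\eqref{eq:assumption_phi-maxwell-multi} is satisfied at once (also the required $W^{1,p}$ regularity is immediate). Next, I would verify the hypothesis~\eqref{eq:assumption k 0-cor} of Corollary~\ref{cor:zeta-complete}. Let $w^{i}\in H^{1}(\Omega;\C)$ solve
\[
\left\{\begin{array}{l}
\div(\so\nabla w^{i})=0\quad\text{in }\Omega,\\
w^{i}=x_{i}\quad\text{on }\partial\Omega.
\end{array}\right.
\]
Because $\so\in\R^{3\times 3}$ is a constant tensor, the function $w^{i}(x)=x_{i}$ is itself a solution of this Dirichlet problem (as $\div(\so\,\e_{i})=0$), and by uniqueness it is \emph{the} solution. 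Thus $\nabla w^{i}=\e_{i}$ throughout $\Omega$, and consequently
\[
\bigl|\zeta_{\det}^{M}(\nabla w^{1},\nabla w^{2},\nabla w^{3})(x)\bigr|=\bigl|\det[\e_{1}\;\e_{2}\;\e_{3}]\bigr|=1,\qquad x\in\Omega,
\]
so~\eqref{eq:assumption k 0-cor} holds with $C_{0}=1$.

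Having verified both hypotheses, I would invoke Corollary~\ref{cor:zeta-complete} directly: it produces constants $\delta, C>0$ and $n\in\N$, depending only on the quantities listed in that corollary, such that if $\|\sigma-\so\|_{W^{1,p}}\le\delta$ then $K^{(n)}\times\{\nabla\psi_{1},\nabla\psi_{2},\nabla\psi_{3}\}=K^{(n)}\times\{\e_{1},\e_{2},\e_{3}\}$ is $(\zeta_{\det}^{M},C)$-complete. The dependencies then collapse to the ones stated in the proposition because $\|\psi_{i}\|_{W^{2,p}}$ depends only on $\Omega$, and $C_{\zeta}$ is absolute for the multilinear bounded map $\zeta_{\det}^{M}$ (cf.~Example~\ref{exa:zeta_det_maxwell}).

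There is essentially no genuine obstacle here; the statement is a clean corollary of Corollary~\ref{cor:zeta-complete}, and the only substantive step is the observation that the constant-coefficient conductivity problem with linear boundary data admits the linear solution $w^{i}=x_{i}$, which makes the $\k=0$ determinant condition trivial with $C_{0}=1$. The deeper content—the quantitative holomorphic continuation in $\k$ and the perturbation in $\sigma$ about $\so$—is entirely packaged inside Corollary~\ref{cor:zeta-complete} and Theorem~\ref{thm:quantitative bounds-complex maxwell-zeta}.
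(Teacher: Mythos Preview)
Your proposal is correct and follows essentially the same approach as the paper: apply Corollary~\ref{cor:zeta-complete} with $\zeta=\zeta_{\det}^{M}$ and $\psi_{i}=x_{i}$, then observe that for the constant tensor $\so$ one has $w^{i}=x_{i}$, whence $\det[\nabla w^{1}\;\nabla w^{2}\;\nabla w^{3}]=1$ and \eqref{eq:assumption k 0-cor} holds with $C_{0}=1$. The paper's proof is just a terser version of exactly this argument.
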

\begin{proof}
We want to apply Corollary~\ref{cor:zeta-complete} with $\zeta=\zeta_{\det}^{M}$
and $\psi_{i}=x_{i}$ for $i=1,2,3$. We only need to show that \eqref{eq:assumption k 0-cor}
holds. Since $w^{i}=x_{i}$, for every $x\in\Omega$ there holds $\zeta\bigl(\nabla w^{1},\nabla w^{2},\nabla w^{3}\bigr)(x)=\det\begin{bmatrix}\e_{1} & \e_{2} & \e_{3}\end{bmatrix}=1$,
as desired.
\end{proof}
Let $K\times\{\phi_{1},\phi_{2},\phi_{3}\}$ be a $(\zeta_{\det}^{M},C)$-complete
set. With the notation of Definition~\ref{def:zeta-complete-maxwell},
there is an open cover $\Omega=\cup_{\k}\Omega_{\k}$ such that
\begin{equation}
\bigl|\det\begin{bmatrix}E_{\k}^{1} & E_{\k}^{2} & E_{\k}^{3}\end{bmatrix}\bigr|>0\qquad\text{in }\Omega_{\k}.\label{eq:mreit}
\end{equation}
A simple calculation shows that $q_{\k}=\k\epsi+\ii\sigma$ satisfies
a first order partial differential equation of the form
\[
\nabla q_{\k}M_{\k}=F(\k,q_{\k},H_{\k}^{i},\Delta H_{\k}^{i})\qquad\text{in \ensuremath{\Omega},}
\]
where $M_{\k}$ is the $3\times6$ matrix-valued function given by
\[
M_{\k}=\left[\begin{array}{ccccc}
\curl H_{\k}^{1}\times\e_{1} & \curl H_{\k}^{1}\times\e_{2} & \cdots & \curl H_{\k}^{3}\times\e_{1} & \curl H_{\k}^{3}\times\e_{2}\end{array}\right],
\]
and $F$ is a given vector-valued function. If $\bigl|\det\begin{bmatrix}E_{\k}^{1} & E_{\k}^{2} & E_{\k}^{3}\end{bmatrix}(x)\bigr|>0$,
then it is easy to see that $M_{\k}(x)$ admits a right inverse $M_{\k}^{-1}(x)$.
By \eqref{eq:mreit}, $M_{\k}$ is invertible in $\Omega_{\k}$. The
equation for $q_{\k}$ becomes
\begin{equation}
\nabla q_{\k}=F(\k,q_{\k},H_{\k}^{i},\Delta H_{\k}^{i})M_{\k}^{-1}\quad\text{in }\Omega_{\k}.\label{eq:final-1}
\end{equation}
Proceeding as in \cite{bal2012inversediffusion}, it is possible to
integrate \eqref{eq:final-1} in each $\Omega_{\k}$ and reconstruct
$q_{\k}$ uniquely, provided that $q_{\k}$ is known at one point
of $\Omega$ \cite{albertigsII}.

\section{\label{sec:Conclusions}Conclusions}

Motivated by several hybrid imaging inverse problems, we studied the
boundary control of solutions of the Helmholtz and Maxwell equations
to enforce local non-zero constraints inside the domain. We have improved
the multiple frequency approach to this problem introduced in \cite{alberti2013multiple,albertigsII}
and have shown its effectiveness in several contexts. More precisely,
we give a priori boundary conditions $\phi_{i}$ and a finite set
of frequencies $K^{(n)}$ such that the corresponding solutions $u_{\k}^{i}$
satisfy the required constraints with an a priori determined constant.

An open problem concerns a more precise estimation of the number of
needed frequencies $n$. It is possible to show that, under the assumption
of real analytic coefficients, almost any $d+1$ frequencies in a
fixed range give the required constraints, where $d$ is the dimension
of the space \textcolor{black}{\cite{capalb-analytic}}. The proof is based on the structure
of analytic varieties, and so the hypothesis of real analyticity is
crucial. However, this assumption is far too strong for the applications.
Thus, a natural question to ask is whether it is possible to lower
the assumption of real analyticity.

Satisfying the constraints in the case $\k=0$ is usually straightforward
in two dimensions, but may present difficulties in 3D if $a$ (or
$\sigma$ in the case of Maxwell's equations) is not constant. The
method may work even if the constraint is not verified in the case
$\k=0$: when dealing with the constraints $\left|\nabla u_{\k}\right|\ge C$,
a generic choice of the boundary condition $\phi$ is sufficient \cite{alberti-genericity}.
However, choosing a generic boundary condition may give a very low
constant $C$ and a very high number of frequencies. An open problem
is to find an alternative to the study of the constraints in $\k=0$.
In particular, as far as the Helmholtz equation is concerned, an asymptotic
expansion of $u_{\k}$ for high frequencies $\k$  may give the required
non-zero constraints, and by holomorphicity this would still give
the desired result.

\section*{Acknowledgments}

This work has been done during my D.Phil.~at the Oxford Centre for
Nonlinear PDE under the supervision of Yves Capdeboscq, whom I would
like to thank for many stimulating discussions on the topic and for
a thorough revision of the paper. It is a pleasure to thank Giovanni
Alessandrini for suggesting to me the ideas of Lemma~\ref{lem:A'}.
I was supported by the EPSRC Science \& Innovation Award to the Oxford
Centre for Nonlinear PDE (EP/EO35027/1).

\bibliographystyle{abbrv}
\bibliography{alberti}

\appendix

\section{\label{sec:Some-basic-tools}Some basic tools}

\subsection{\label{sub:The-Helmholtz-equation-appendix}The Helmholtz equation}

The following result concerns the well-posedness for the Helmholtz
equation. The result is standard: for a proof, see \cite{albertidphil}.
\begin{prop}
\label{prop:helmoltz-wellposedness}Assume that \eqref{eq:ellipticity_a-epsi-multi}
holds and take $M>0$.
\begin{enumerate}
\item [1.]If \eqref{eq:sigma equal 0-multi} holds then there exists $\Sigma=\left\{ \lambda_{l}:l\in\N^{*}\right\} \subseteq\R_{+}$
with \textup{$\lambda_{l}\to+\infty$} such that for $\k\in(\C\setminus\sqrt{\Sigma})\cap B(0,M)$,
$f\in\Vp$ and $\phi\in\Hone$ the problem
\begin{equation}
\left\{ \begin{array}{l}
-\div(a\,\nabla u)-\k^{2}\,\epsilon\, u=f\qquad\text{in \ensuremath{\Omega},}\\
u=\phi\qquad\text{\text{on \ensuremath{\partial\Omega},}}
\end{array}\right.\label{eq:Helboundary-real}
\end{equation}
has a unique solution $u\in\Hone$ and
\begin{equation}
\left\Vert u\right\Vert _{\Hone}\le C(\Omega,\Lambda,M)\Bigl[1+\sup_{l\in\N^{*}}\frac{1}{\left|\lambda_{l}-\k^{2}\right|}\Bigr]\bigl(\left\Vert \phi\right\Vert _{\Hone}+\left\Vert f\right\Vert _{\Vp}\bigr).\label{eq:helboundstab}
\end{equation}

\item [2.]If \eqref{eq:sigma bounds below-multi} holds then there exists
$\eta>0$ depending on $\Omega$ and $\maxa$ only such that for $\k\in B(0,M)$
with $\Im\k\ge-\eta$, $f\in\Vp$ and $\phi\in\Hone$ the problem
\begin{equation}
\left\{ \begin{array}{l}
-\div(a\,\nabla u)-(\k^{2}\epsilon+\ii\k\sigma)\, u=f\qquad\text{in \ensuremath{\Omega},}\\
u=\phi\qquad\text{\text{on \ensuremath{\partial\Omega},}}
\end{array}\right.\label{eq:Helboundary-complex}
\end{equation}
has a unique solution $u\in\Hone$ with
\begin{equation}
\left\Vert u\right\Vert _{\Hone}\le C(\Omega,\maxa,M)\left[\left\Vert \phi\right\Vert _{\Hone}+\left\Vert f\right\Vert _{\Vp}\right].\label{eq:helboundstab-1}
\end{equation}

\end{enumerate}
\end{prop}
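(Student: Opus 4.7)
The plan is to reduce both parts to homogeneous Dirichlet data by a standard lifting argument, and then to treat the two regimes of $\sigma$ with different tools: spectral decomposition when $\sigma=0$, and Lax--Milgram after a coercivity computation when $\sigma\ge\maxa^{-1}$. Since the data $\phi$ is already given in $\Hone$, I would simply set $v=u-\phi\in\V$; then $v$ solves $-\div(a\nabla v)-(\k^{2}\epsi+\ii\k\sigma)v=g$ with homogeneous Dirichlet data, where $g=f+\div(a\nabla\phi)+(\k^{2}\epsi+\ii\k\sigma)\phi\in\Vp$ satisfies $\Vert g\Vert_{\Vp}\le C(1+M^{2})(\Vert f\Vert_{\Vp}+\Vert\phi\Vert_{\Hone})$ with $C$ depending only on $\Omega$ and $\maxa$.

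For Part 1, I would introduce the unbounded operator $L\colon D(L)\subseteq L^{2}(\Omega,\epsi\,dx)\to L^{2}(\Omega,\epsi\,dx)$ defined by $Lv=-\epsi^{-1}\div(a\nabla v)$ with Dirichlet boundary conditions. Since $a$ is uniformly elliptic and $\epsi$ is bounded above and below, $L^{-1}$ is a compact self-adjoint operator on the weighted space, so $L$ has a discrete positive spectrum $\Sigma=\{\lambda_{l}\}$ with $\lambda_{l}\to+\infty$ --- these are precisely the Dirichlet eigenvalues of \eqref{eq:helmholtz-multi} --- and an orthonormal basis $(e_{l})_{l}$ of $L^{2}(\Omega,\epsi\,dx)$ of eigenfunctions, normalised so that $\int\epsi|e_{l}|^{2}=1$. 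For $\k^{2}\notin\Sigma$ the unique candidate solution is $v=\sum_{l}c_{l}e_{l}$ with $c_{l}=g_{l}/(\lambda_{l}-\k^{2})$ and $g_{l}=\langle g,e_{l}\rangle$ the $\Vp$--$\V$ duality pairing. The identities $\int a|\nabla v|^{2}=\sum_{l}\lambda_{l}|c_{l}|^{2}$ and $\Vert g\Vert_{\Vp}^{2}\sim\sum_{l}\lambda_{l}^{-1}|g_{l}|^{2}$ (the latter by Cauchy--Schwarz with weights using the form $a_{H}(v,w)=\int a\nabla v\cdot\overline{\nabla w}$) give, after rewriting $\lambda_{l}^{2}/|\lambda_{l}-\k^{2}|^{2}\le 2+2M^{4}\sup_{l}|\lambda_{l}-\k^{2}|^{-2}$, the bound
\[
\Vert v\Vert_{\V}^{2}\le C\bigl(1+M^{4}\sup_{l}|\lambda_{l}-\k^{2}|^{-2}\bigr)\Vert g\Vert_{\Vp}^{2},
\]
which, combined with the lifting estimate, yields \eqref{eq:helboundstab}; uniqueness is the Fredholm alternative, or equivalently is direct from the spectral expansion since $c_{l}(\lambda_{l}-\k^{2})=0$ forces $c_{l}=0$.

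For Part 2, Lax--Milgram applied to the sesquilinear form
\[
B_\k(v,w)=\int_\Omega a\nabla v\cdot\overline{\nabla w}\,dx-\int_\Omega(\k^{2}\epsi+\ii\k\sigma)v\bar w\,dx
\]
on $\V$ is the natural route. Writing $\k=\alpha+\ii\beta$, a direct computation gives $\Re B_\k(v,v)=\int a|\nabla v|^{2}-(\alpha^{2}-\beta^{2})\int\epsi|v|^{2}+\beta\int\sigma|v|^{2}$ and $\Im B_\k(v,v)=-2\alpha\beta\int\epsi|v|^{2}-\alpha\int\sigma|v|^{2}$. The main obstacle is a uniform coercivity bound $|B_\k(v,v)|\ge c\Vert v\Vert_{\V}^{2}$ for $\k\in B(0,M)$ with $|\Im\k|<\eta$, with $c$ and $\eta$ depending only on $\Omega$ and $\maxa$. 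When $|\alpha|$ is bounded away from $0$, I would use the imaginary part to control $\Vert v\Vert_{L^{2}}^{2}$ --- choosing $\eta$ small enough that $\sigma-2|\beta|\epsi\ge\tfrac{1}{2}\maxa^{-1}$ a.e. --- and then the ellipticity of $a$ in the real part controls $\Vert\nabla v\Vert_{L^{2}}^{2}$; when $|\alpha|$ is small, the real part is itself coercive after absorbing the zeroth-order terms via Poincaré, again for $\eta$ small. A suitable combination $\Re B_\k+\mu(\k)\Im B_\k$ with $\mu$ chosen piecewise in $\k$ interpolates these regimes and delivers the unified bound. Lax--Milgram then gives unique solvability and \eqref{eq:helboundstab-1}, with constants independent of $\k$; the delicate point is keeping those constants uniform in $\k$, which dictates both the splitting above and the smallness of $\eta$ coming from the Poincaré constant of $\Omega$ and the lower bound $\maxa^{-1}$ on $\sigma$ and $\epsi$.
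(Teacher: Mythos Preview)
The paper does not actually prove this proposition; it simply declares the result standard and defers to \cite{albertidphil}. There is therefore no ``paper's proof'' to compare against, and your outline is a correct and natural route to the result.

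A couple of minor remarks. In Part~1, your spectral argument is exactly the standard mechanism; the identity $\lambda_l/(\lambda_l-\k^2)=1+\k^2/(\lambda_l-\k^2)$ that underlies your bound $\lambda_l^2/|\lambda_l-\k^2|^2\le 2+2M^4\sup_l|\lambda_l-\k^2|^{-2}$ is precisely what gives the factor $1+\sup_l|\lambda_l-\k^2|^{-1}$ in \eqref{eq:helboundstab}. In Part~2, note that the proposition is stated for $\Im\k\ge-\eta$ rather than $|\Im\k|<\eta$ (the strip is what the paper actually uses later, in the definition of $D$). Your coercivity argument extends immediately to $\Im\k\ge 0$: with $\beta\ge 0$ the term $\beta\int\sigma|v|^2$ in $\Re B_\k(v,v)$ is nonnegative and $2\beta\epsi+\sigma\ge\sigma\ge\maxa^{-1}$, so both branches of your case split only improve. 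The threshold $\alpha_0$ for the split and the smallness of $\eta$ are indeed dictated by the Poincar\'e constant of $\Omega$ and by $\maxa$, so the constants in \eqref{eq:helboundstab-1} depend on exactly the stated quantities.
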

We have the following result, regarding the asymptotic distribution
of the eigenvalues. The result is classical and  is known as Weyl's
lemma.
\begin{lem}
\label{lem:weyl's lemma}Assume that \eqref{eq:ellipticity_a-epsi-multi}
and \eqref{eq:sigma equal 0-multi} hold true. There exist $C_{1},C_{2}>0$
depending on $\Omega$ and $\maxa$ such that
\[
C_{1}l^{\frac{2}{d}}\le\lambda_{l}\le C_{2}l^{\frac{2}{d}},\qquad l\in\N^{*}.
\]
\end{lem}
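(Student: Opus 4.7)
The plan is to reduce to the case of the Dirichlet Laplacian via a Rayleigh quotient comparison, and then obtain two-sided bounds for Laplacian eigenvalues by domain monotonicity against cubes, whose eigenvalues are explicit.

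First, I would invoke the min-max (Courant--Fischer) characterization
\[
\lambda_{l}=\min_{V_{l}}\,\max_{u\in V_{l}\setminus\{0\}}\frac{\int_{\Omega}\nabla u\cdot a\,\nabla u}{\int_{\Omega}\epsilon\,|u|^{2}},
\]
where $V_{l}$ ranges over $l$-dimensional subspaces of $H_{0}^{1}(\Omega)$. Using \eqref{eq:ellipticity_a-multi} and \eqref{eq:bounds_epsilon-multi}, the Rayleigh quotient is sandwiched between $\maxa^{-2}$ and $\maxa^{2}$ times the usual Dirichlet-Laplacian Rayleigh quotient $\int|\nabla u|^{2}/\int|u|^{2}$. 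Taking min-max of each side gives
\[
\maxa^{-2}\mu_{l}\le\lambda_{l}\le\maxa^{2}\mu_{l},
\]
where $\mu_{l}$ is the $l$-th Dirichlet eigenvalue of $-\Delta$ on $\Omega$. This reduces the statement to proving $\mu_{l}\asymp l^{2/d}$ with constants depending only on $\Omega$.

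Second, I would fix cubes $Q_{r}\subseteq\Omega\subseteq Q_{R}$ whose side lengths $r$ and $R$ depend only on $\Omega$ (e.g.\ the inradius and diameter). Extension by zero embeds $H_{0}^{1}(Q_{r})\hookrightarrow H_{0}^{1}(\Omega)\hookrightarrow H_{0}^{1}(Q_{R})$, so the min-max principle yields domain monotonicity
\[
\mu_{l}(Q_{R})\le\mu_{l}(\Omega)\le\mu_{l}(Q_{r}).
\]

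Third, the eigenvalues of the Dirichlet Laplacian on a cube $(0,L)^{d}$ are exactly $\pi^{2}L^{-2}(k_{1}^{2}+\cdots+k_{d}^{2})$ with $k\in(\N^{*})^{d}$. A Gauss-style lattice counting argument shows that the counting function $N(\tau)=\#\{l:\mu_{l}(Q_{L})\le\tau\}$ satisfies $c_{d}(L\sqrt{\tau})^{d}\le N(\tau)\le C_{d}(1+L\sqrt{\tau})^{d}$ for all $\tau\ge0$, and inverting gives two-sided bounds $c\,L^{-2}l^{2/d}\le\mu_{l}(Q_{L})\le C\,L^{-2}l^{2/d}$ valid for every $l\ge 1$. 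Applying this to $Q_{r}$ and $Q_{R}$ and chaining with the previous step yields $C_{1}l^{2/d}\le\mu_{l}(\Omega)\le C_{2}l^{2/d}$, and combined with $\lambda_{l}\asymp\mu_{l}$ this proves the lemma.

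The only mildly delicate point is ensuring the cube-eigenvalue bound holds \emph{uniformly in $l$}, not merely asymptotically; this is handled by the explicit Gauss lattice-point count above, whose error term is harmlessly absorbed in the constants. All dependencies on $\Omega$ enter through $r$ and $R$, and on the PDE only through $\maxa$, as required.
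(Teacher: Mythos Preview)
Your proof is correct and follows essentially the same route as the paper: both use the Courant--Fischer min-max principle together with the ellipticity bounds \eqref{eq:ellipticity_a-epsi-multi} to reduce to $\Lambda^{-2}\mu_{l}\le\lambda_{l}\le\Lambda^{2}\mu_{l}$ for the Dirichlet Laplacian eigenvalues $\mu_{l}$, and then invoke the two-sided Weyl bound $\mu_{l}\asymp l^{2/d}$. The only difference is that the paper obtains the latter by citation, whereas you supply a self-contained argument via domain monotonicity against inscribed and circumscribed cubes and explicit lattice-point counting.
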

\begin{proof}
Let $\mathfrak{F}_{l}$ denote the set of all $l$-dimensional subspaces
of $\Vr$. In view of the Courant\textendash{}Fischer\textendash{}Weyl
min-max principle \cite[Exercise 12.4.2]{schmudgen2012} we have
\[
\lambda_{l}=\min_{D\in\mathfrak{F}_{l}}\max_{u\in D\setminus\{0\}}\frac{\int_{\Omega}a\nabla u\cdot\nabla u\, dx}{\int\epsilon u^{2}\, dx},\qquad l\in\N^{*}.
\]
Therefore we have
\begin{equation}
\Lambda^{-2}\mu_{l}\le\lambda_{l}\le\Lambda^{2}\mu_{l},\qquad l\in\N^{*},\label{eq:lambda_mu}
\end{equation}
where $\mu_{l}=\min_{D\in\mathfrak{F}_{l}}\max_{u\in D\setminus\{0\}}(\int_{\Omega}\nabla u\cdot\nabla u\, dx)(\int u^{2}\, dx)^{-1}$.
By the min-max principle, $\mu_{l}$ are the eigenvalues of the Laplace
operator on $\Omega$, and so they satisfy 
\[
c_{1}l^{\frac{2}{d}}\le\mu_{l}\le c_{2}l^{\frac{2}{d}},\qquad l\in\N^{*}
\]
for some $c_{1},c_{2}>0$ depending on $\Omega$ (see \cite[Theorem 12.14]{schmudgen2012}
or \cite[Chapter 5, Lemma 3.1]{kavian1993}). Combining this inequality
with \eqref{eq:lambda_mu} yields the result.
\end{proof}
We now study regularity for the Helmholtz equation, which is a consequence
of classical elliptic regularity theory \cite[Theorem 5.21]{giaquinta_martinazzi}. \textcolor{black}{ For $\order\in\N$, we use the notation}
\[
 \textcolor{black}{X_\order=\begin{cases}
           L^{\infty}(\Omega;\C) & \text{if $\order=0,1$,}\\
           \Cl^{\order -2,\alpha}(\overline{\Omega};\C) & \text{if $\order\ge 2$.}
          \end{cases}}
\]
\begin{prop}
\label{pro: Helmholtz regularity}Take $\order\in\N$, $\alpha\in(0,1)$
and $M>0$. Assume that \eqref{eq:ellipticity_a-epsi-multi}, \eqref{eq:regularity assumption-helmh-multi}
and either \eqref{eq:sigma equal 0-multi} or \eqref{eq:sigma bounds below-multi}
hold. \textcolor{black}{ Take $\k\in\C$ with $\left|\k\right|\le M$, $f\in X_\order$, $F\in X_{\order+1}^3$ and $\phi\in\Cl^{\order,\alpha}(\overline{\Omega};\C)$.} Let $u\in\Hone$ be a solution to
\[
\left\{ \begin{array}{l}
-\div(a\,\nabla u)-(\k^{2}\epsilon+\ii\k\sigma)\, u=\div F+f\qquad\text{in \ensuremath{\Omega},}\\
u=\phi\qquad\text{\text{on \ensuremath{\partial\Omega}.}}
\end{array}\right.
\]
 Then $u\in\Cl^{\order,\alpha}(\overline{\Omega};\C)$ and
\[
\left\Vert u\right\Vert _{\Cl^{\order}(\overline{\Omega};\C)}\le C\left(\left\Vert u\right\Vert _{\Hone}+\left\Vert \phi\right\Vert _{\Cl^{\order,\alpha}(\overline{\Omega};\C)}+\left\Vert f\right\Vert _{X_\order}+\left\Vert F\right\Vert _{X_{\order+1}^3}\right)
\]
for some $C>0$ depending only on $\Omega$,  $\maxa$, $\order$,
$\alpha$, $M$, $\left\Vert (\epsilon,\sigma)\right\Vert _{W^{\order-1,\infty}(\Omega;\R)^{2}}$
and $\left\Vert a\right\Vert _{\Cl^{\order-1,\alpha}(\overline{\Omega};\R^{d\times d})}$.
\end{prop}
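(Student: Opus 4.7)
The proof proceeds by induction on $\order$, at each stage applying the classical elliptic regularity result (Theorem 5.21 of the cited reference of Giaquinta--Martinazzi) to the equation after absorbing the zeroth-order term into the source. Set
\[
g_{\k} := f + (\k^{2}\epsilon+\ii\k\sigma)\, u,
\]
so that $u$ solves the divergence-form problem
\[
-\div(a\,\nabla u) = \div F + g_{\k}\quad\text{in }\Omega,\qquad u=\phi\text{ on }\bo.
\]
The induction strategy is: (i) verify that $g_\k$ enjoys the regularity required at the next level (using the previously obtained regularity of $u$ plus the assumed smoothness of $\epsilon$ and $\sigma$); (ii) invoke the regularity theorem to promote $u$ one step up the $\Cl^{\order,\alpha}$ scale; (iii) track the constants to confirm they depend only on the a priori quantities listed.

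For the base case $\order=0$, only $a\in L^\infty$ is available and $f,F\in L^\infty$. A preliminary Moser iteration applied to the energy solution $u\in\Hone$ upgrades $u$ to $L^\infty(\Omega;\C)$ with an estimate in terms of $\|u\|_{\Hone}$, $\|f\|_{L^\infty}$, $\|F\|_{L^\infty}$; hence $g_\k \in L^\infty$. The boundary De Giorgi--Nash--Moser Hölder estimate (the $\order=0$ instance of Theorem~5.21) with $\phi\in\Cl^{0,\alpha}(\overline{\Omega};\C)$ then delivers $u\in\Cl^{0,\alpha}(\overline{\Omega};\C)$ with the claimed quantitative bound, the dependence on $\alpha$ absorbed into the constant.

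For the inductive step from $\order$ to $\order+1$, the hypothesis \eqref{eq:regularity assumption-helmh-multi} at level $\order+1$ gives $a\in\Cl^{\order,\alpha}(\overline{\Omega};\R^{d\times d})$ and $\epsilon,\sigma\in W^{\order,\infty}(\Omega;\R)$. By induction $u\in\Cl^{\order}(\overline{\Omega};\C)$ with the bound
\[
\|u\|_{\Cl^{\order}(\overline{\Omega};\C)} \le C\bigl(\|u\|_{\Hone}+\|\phi\|_{\Cl^{\order,\alpha}}+\|f\|_{X_\order}+\|F\|_{X_{\order+1}^3}\bigr).
\]
The product rule (respecting the definition of $X_{\order+1}$) shows $g_\k\in X_{\order+1}$ with norm controlled linearly by $\|u\|_{\Cl^{\order}}$, $\|f\|_{X_{\order+1}}$, and the $W^{\order,\infty}$ norms of $\epsilon,\sigma$. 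Combined with $F\in X_{\order+2}^3$ and $\phi\in\Cl^{\order+1,\alpha}(\overline{\Omega};\C)$, the regularity theorem at level $\order+1$ applies and yields $u\in\Cl^{\order+1,\alpha}(\overline{\Omega};\C)$ with the desired estimate; the $\|u\|_{\Hone}$ term is simply the energy contribution carried through each invocation.

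The main obstacle is the bookkeeping around the scale $X_\order$, which is $L^\infty$ for $\order=0,1$ but $\Cl^{\order-2,\alpha}$ for $\order\ge 2$. One must treat the transition $\order=2$ separately (where $f$ first becomes Hölder and $\epsilon,\sigma$ first acquire a Lipschitz derivative), ensuring the product rule places $g_\k$ in the correct space. The complex-valued nature of $u$ is harmless, handled by splitting into real and imaginary parts. Gathering all constants along the finite chain of inductive steps yields a final $C$ depending only on $\Omega$, $\maxa$, $\order$, $\alpha$, $M$, $\|a\|_{\Cl^{\order-1,\alpha}}$ and $\|(\epsilon,\sigma)\|_{W^{\order-1,\infty}}$, as stated.
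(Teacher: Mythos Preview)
Your proposal is correct and aligns with the paper's approach: the paper does not give a detailed proof of this proposition but simply records it as ``a consequence of classical elliptic regularity theory \cite[Theorem 5.21]{giaquinta_martinazzi}''. Your inductive bootstrap---absorbing the zeroth-order term $(\k^{2}\epsilon+\ii\k\sigma)u$ into the source and invoking the cited Schauder/De~Giorgi--Nash--Moser estimates at each level---is exactly the standard way to unpack that citation, and the bookkeeping you outline for the transition at $\order=2$ and the inclusions $X_{\order+1}\hookrightarrow X_{\order}$ is the only subtlety to watch.
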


\subsection{\label{sub:Maxwell's-equations-appendix}Maxwell's equations}

We first study well-posedness for Maxwell's equations. The result
is standard: for a proof, see \cite{albertigsII,albertidphil}.
\begin{prop}
\label{prop:well-posedness-maxwell}Assume that \eqref{eq:assumption_coefficients-maxwell-multi}
holds and take $M>0$. There exist $\eta,C>0$ depending on $\Omega$,
 $\Lambda$ and $M$ such that for all $\k\in\C$ with $\left|\Im\k\right|\le\eta$
and $\left|\k\right|\le M$ the problem
\begin{equation}
\left\{ \begin{array}{l}
\curl E_{\k}=\ii\k\mu H_{\k}\qquad\text{in \ensuremath{\Omega},}\\
\curl H_{\k}=-\ii(\k\epsilon+\ii\sigma)E_{\k}\qquad\text{in \ensuremath{\Omega},}\\
E_{\k}\times\nu=\phi\times\nu\qquad\text{on \ensuremath{\partial\Omega},}
\end{array}\right.\label{eq:combined-maxwell}
\end{equation}
admits a unique solution $(E_{\k},H_{\k})$ in $\Hcurl\times\Hmu$
satisfying
\[
\left\Vert (E_{\k},H_{\k})\right\Vert _{\Hcurl^{2}}\le C\left\Vert \phi\right\Vert _{\Hcurl}.
\]

\end{prop}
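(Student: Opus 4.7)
The proof follows the standard reduction to a second-order equation for $E_\k$ alone, combined with the analytic Fredholm theorem and a uniqueness argument on the real axis.

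First I would lift the boundary data. By \eqref{eq:assumption_phi-maxwell-multi}, write $E_\k = \phi + \tilde E_\k$ with $\tilde E_\k \times \nu = 0$ on $\bo$, so $\tilde E_\k \in \Hocurl$. Eliminating $H_\k = (\ii\k\mu)^{-1}\curl E_\k$ from the first equation (for $\k \neq 0$) and substituting into the second reduces the problem to finding $\tilde E_\k \in \Hocurl$ solving
\[
\curl(\mu^{-1}\curl \tilde E_\k) - (\k^{2}\epsi + \ii\k\sigma)\tilde E_\k = F_\k \quad \text{in }\Omega,
\]
where $F_\k$ depends linearly and holomorphically on $\k$ through $\phi$. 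The induced $H_\k$ then automatically lies in $\Hmu$: the divergence condition follows by taking $\div$ of $\curl E_\k = \ii\k\mu H_\k$, and $\mu H_\k\cdot\nu = 0$ follows by testing this equation against gradients of $H^{1}_0$ functions.

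The key compactness input is a Helmholtz decomposition: any $u \in \Hocurl$ splits as $u = u_d + \nabla p$ with $p \in H_0^1(\Omega;\C)$ and $\div(\epsi u_d) = 0$, and Weber's theorem (using that $\bo$ is simply connected) yields a compact embedding of the divergence-free subspace into $\ld$. Hence the zero-order term $(\k^{2}\epsi + \ii\k\sigma)\cdot$ is a compact perturbation of a coercive operator, so $\k \mapsto A_\k$ is a holomorphic family of Fredholm operators of index zero on $\Hocurl$.

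Uniqueness for real $\k \in [-M, M]$: testing the homogeneous equation against $\overline{\tilde E_\k}$ and taking imaginary parts gives $\k \int_\Omega \sigma \tilde E_\k \cdot \overline{\tilde E_\k}\,dx = 0$, forcing $\tilde E_\k = 0$ by \eqref{eq:assumption_ell-maxwell-multi} when $\k \neq 0$; the case $\k = 0$ is the decoupled elliptic system \eqref{eq: k  0} and is classical. By the analytic Fredholm theorem, the set of $\k \in B(0, M)$ at which $A_\k$ fails to be invertible is discrete and disjoint from $[-M, M]$, and by compactness of $[-M, M]$ together with continuity of the resolvent there exists $\eta = \eta(\Omega, \maxa, M) > 0$ such that $A_\k^{-1}$ is uniformly bounded on $\{\k \in B(0, M) : |\Im \k| \le \eta\}$. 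This produces the stability estimate for $E_\k$, and then for $H_\k$ via $H_\k = (\ii\k\mu)^{-1}\curl E_\k$ away from $\k = 0$, with $\k$ near zero handled through the decoupled system \eqref{eq: k  0}. The hardest step is obtaining a uniform strip width $\eta > 0$: it crucially uses holomorphicity of $\k \mapsto A_\k$ and compactness of $[-M, M] \subset \R$ to prevent singular points from accumulating at the real axis within $B(0, M)$.
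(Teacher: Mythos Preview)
The paper does not supply a proof of this proposition: it declares the result ``standard'' and refers to \cite{albertigsII,albertidphil}. Your outline follows the usual route---reduction to a second-order curl--curl equation, Helmholtz decomposition combined with Weber's compact embedding to obtain the Fredholm property, uniqueness on the real axis via the imaginary part of the energy identity, and the analytic Fredholm theorem---and is essentially correct as a sketch.

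One point deserves more care. The proposition asserts that $\eta$ and $C$ depend on $\Omega$, $\Lambda$ and $M$ \emph{only}, i.e.\ uniformly over all coefficients obeying \eqref{eq:assumption_coefficients-maxwell-multi}. Your final step (``compactness of $[-M,M]$ together with continuity of the resolvent'') produces, for each fixed triple $(\mu,\epsi,\sigma)$, a strip whose width is governed by the distance from $[-M,M]$ to the nearest pole of $A_\k^{-1}$; nothing in that argument prevents this distance from shrinking as the coefficients vary within the admissible class. The remedy is to make the energy argument quantitative rather than merely qualitative. Testing the homogeneous equation against $\overline{E}$ and writing $\k=a+\ii b$, the imaginary part of the identity reads $a\bigl(2b\int_\Omega \epsi|E|^2 + \int_\Omega \sigma|E|^2\bigr)=0$, so any pole with $a\neq 0$ must satisfy $b\le -\tfrac{1}{2\Lambda^{2}}$; the purely imaginary axis is handled by the real part in the same way. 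This locates the singular set at uniform distance from $\R$, and the same identity applied to the inhomogeneous problem gives an explicit bound $\|A_\k^{-1}\|\le C(\Lambda,M)$ on the real axis away from $\k=0$ (which you already treat via the decoupled first-order system). A Neumann-series perturbation then yields $\eta$ and $C$ with the stated dependence.
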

Next, regularity properties are discussed. This result follows from
the regularity theory for Maxwell's equations described in \cite{alberti-capdeboscq}
and is proven in detail in \cite{albertigsII,albertidphil}.
\begin{prop}
\label{prop:regularity-maxwell}Assume that \eqref{eq:assumption_coefficients-maxwell-multi}
holds for some $p>3$ and $\order\in\N$. Take $\eta,M>0$ as in Proposition~\ref{prop:well-posedness-maxwell}.
For $\k\in\C$ with $\left|\Im\k\right|\le\eta$ and $\left|\k\right|\le M$
let $(\E,\H)$ be the unique solution in $\Hcurl\times\Hmu$ to \eqref{eq:combined-maxwell}.
Then $(\E,\H)\in\Cl^{\order}(\overline{\Omega};\C^{6})$ and
\[
\left\Vert (\E,\H)\right\Vert _{\Cl^{\order}(\overline{\Omega};\C^{6})}\le C\left\Vert \phi\right\Vert _{W^{\order+1,p}(\Omega;\C^{3})}
\]
for some $C>0$ depending on $\Omega$,  $\maxa$, $M$, $\kappa$,
$p$ and $\left\Vert (\mu,\epsilon,\sigma)\right\Vert _{W^{\order+1,p}(\Omega;\R^{3\times3})^{3}}$
only.
\end{prop}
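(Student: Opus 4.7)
The plan is to derive $\Cl^{\order}$ regularity from the $H(\curl)$ well-posedness by reading Maxwell's equations as a pair of coupled div-curl systems and iterating the div-curl regularity theory of \cite{alberti-capdeboscq}. Let $(\E,\H)\in\Hcurl\times\Hmu$ be the unique solution given by Proposition~\ref{prop:well-posedness-maxwell}. Because $\div\curl=0$, applying the divergence to the second equation in \eqref{eq:combined-maxwell} yields $\div((\k\epsilon+\ii\sigma)\E)=0$ in $\Omega$, while the first equation gives $\curl\E=\ii\k\mu\H$. Together with $\E\times\nu=\phi\times\nu$ on $\bo$, the field $\E$ is the solution of an elliptic div-curl system with tangential boundary data. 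Similarly, $\H$ satisfies $\curl\H=-\ii(\k\epsilon+\ii\sigma)\E$, $\div(\mu\H)=0$ in $\Omega$, and $\mu\H\cdot\nu=0$ on $\bo$, a div-curl system with normal boundary data.

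The core analytic input is the div-curl regularity result of \cite{alberti-capdeboscq}, which asserts, roughly: if $u\in L^{2}(\Omega;\C^{3})$ satisfies $\curl u\in W^{m,q}$ and $\div(bu)\in W^{m,q}$ with either $u\times\nu$ or $bu\cdot\nu$ prescribed in the appropriate trace space, and if the coefficient $b$ lies in $W^{m+1,q}$ and is uniformly elliptic, then $u\in W^{m+1,q}$ with the corresponding quantitative estimate. Applied to $\E$ with $b=\k\epsilon+\ii\sigma$ and to $\H$ with $b=\mu$, each application of this theorem trades regularity of $(\curl u,\div(bu))$ for one extra Sobolev derivative of $u$ itself, together with control in terms of $\phi$ and the coefficient norms.

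The argument then becomes a bootstrap. I would prove by induction on $m=0,1,\dots,\order$ that $(\E,\H)\in W^{m+1,p}(\Omega;\C^{6})$ with a bound of the claimed form. The base case $m=0$ uses the $H(\curl)$-bound of Proposition~\ref{prop:well-posedness-maxwell}, the $W^{1,p}$ regularity of $\mu,\epsilon,\sigma$, and the div-curl theorem applied separately to $\E$ (with tangential data $\phi\times\nu$ in the right trace space, thanks to the compatibility $\curl\phi\cdot\nu=0$ in \eqref{eq:assumption_phi-maxwell-multi}) and to $\H$ (with vanishing normal data). For the inductive step, having $(\E,\H)\in W^{m,p}$ and $(\mu,\epsilon,\sigma)\in W^{m+1,p}$, one differentiates the products $(\k\epsilon+\ii\sigma)\E$ and $\mu\H$ by the Leibniz rule, uses that $W^{m,p}$ is an algebra for $p>3$, and applies the div-curl theorem again to gain the $(m{+}1)$-th derivative. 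After $\order$ steps one obtains $(\E,\H)\in W^{\order+1,p}(\Omega;\C^{6})$, and the Sobolev embedding $W^{\order+1,p}\hookrightarrow\Cl^{\order,1-3/p}(\overline{\Omega};\C^{6})$, valid precisely because $p>3$, yields the claimed $\Cl^{\order}$-bound.

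The main obstacle is not the bookkeeping of the iteration but the verification at the endpoint $m=0$: one must check that $\phi\times\nu$ is an admissible tangential trace for the div-curl theorem (this is exactly where the compatibility condition $\curl\phi\cdot\nu=0$ in \eqref{eq:assumption_phi-maxwell-multi} and the assumption that $\bo$ is simply connected are used), and that the constant in Proposition~\ref{prop:well-posedness-maxwell} is uniform in $\k\in B(0,M)\cap\{|\Im\k|\le\eta\}$, so that the final estimate inherits uniformity in $\k$. Once these two points are in place, the rest is a mechanical bootstrap, and the precise form of the dependence of the constant on $\maxa,M,\kappa,p$ and the $W^{\order+1,p}$-norms of the coefficients can be read off by tracking the inequalities in the div-curl theorem through the iteration.
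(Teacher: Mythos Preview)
Your proposal is correct and matches what the paper indicates: the paper does not give a self-contained proof but simply states that the result follows from the div-curl regularity theory of \cite{alberti-capdeboscq} with full details in \cite{albertigsII,albertidphil}, and your bootstrap via the coupled div-curl systems for $\E$ and $\H$, followed by the Sobolev embedding $W^{\order+1,p}\hookrightarrow\Cl^{\order}$ for $p>3$, is exactly that argument. The points you flag as needing care (admissibility of the tangential trace via $\curl\phi\cdot\nu=0$ and uniformity of the constants in $\k$) are the right ones.
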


\subsection{\label{sub:The-absence-of}The critical points of solutions to the
conductivity equation}

We start with a qualitative property for solutions to the conductivity
equation.
\begin{lem}[{\cite[Theorem 2.7]{alessandrinimagnanini1994}}]
\label{lem:ales-magn}Let $\Omega\subseteq\R^{2}$ be a smooth and
bounded domain and take $\Omega'\Subset\Omega$. Let $a\in\Cl^{0,\alpha}(\overline{\Omega};\R^{2\times2})$
be such that \eqref{eq:ellipticity_a-multi} holds true and $\phi\in\Cl^{1,\alpha}(\overline{\Omega};\R)$
be such that $\phi_{|\bo}$ has one minimum and one maximum. Then
the solution $u\in\Cl^{1}(\overline{\Omega};\R)$ to 
\[
\left\{ \begin{array}{l}
-\div(a\nabla u)=0\qquad\text{in \ensuremath{\Omega},}\\
u=\phi\qquad\text{on \ensuremath{\partial\Omega},}
\end{array}\right.
\]
satisfies
\[
\min_{\overline{\Omega'}}\left|\nabla u\right|>0.
\]

\end{lem}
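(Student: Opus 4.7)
The plan is to combine the quasiregular structure of two-dimensional divergence-form solutions with a topological argument driven by the boundary data, in the spirit of \cite{alessandrinimagnanini1994}. Working locally in a simply connected subdomain, I would first introduce a stream function $v\in\Cl^{1}(\overline{\Omega})$ with $\nabla v=Ja\nabla u$, where $J$ denotes rotation by $\pi/2$. The complex-valued map $f=u+\ii v$ then satisfies a Beltrami-type equation $\partial_{\bar z}f=\mu\,\partial_{z}f+\nu\,\overline{\partial_{z}f}$ with coefficients $\mu,\nu\in L^{\infty}(\Omega;\C)$ whose moduli are bounded away from $1$ by a constant depending only on $\maxa$. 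By the Bers--Vekua similarity principle, the zeros of $\partial_{z}f$ are isolated, and at any such zero $z_{0}$ of order $N\ge 1$ the solution $u$ admits the local representation $u(z)-u(z_{0})=\Re\bigl((z-z_{0})^{N+1}h(z)\bigr)$ with $h$ continuous and non-vanishing at $z_{0}$. In particular, every interior critical point of $u$ is isolated, and the level set $\{u=u(z_{0})\}$ locally consists of $2(N+1)\ge 4$ Jordan arcs meeting transversally at $z_{0}$.

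Second, I would exploit the boundary hypothesis topologically. Suppose for contradiction that there exists $z_{0}\in\Omega$ with $\nabla u(z_{0})=0$. The strong maximum principle forces $\min_{\bo}\phi<u(z_{0})<\max_{\bo}\phi$ and rules out closed loops in $L=\{u=u(z_{0})\}\cap\overline{\Omega}$ (otherwise $u$ would be constant on the enclosed region, contradicting the isolated-zero structure established above). Consequently, each of the $\ge 4$ arcs of $L$ emanating from $z_{0}$ must reach $\bo$. But because $\phi|_{\bo}$ has exactly one minimum and one maximum, the set $\{\phi=u(z_{0})\}\subseteq\bo$ consists of exactly two points, so at most two arcs of $L$ can terminate on $\bo$. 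This contradiction shows that $\nabla u$ vanishes nowhere in $\Omega$.

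Finally, since $u\in\Cl^{1}(\overline{\Omega})$, in fact in $\Cl^{1,\alpha}$ by Proposition~\ref{pro: Helmholtz regularity} applied with $\k=0$, the map $|\nabla u|$ is continuous, and the compactness of $\overline{\Omega'}\Subset\Omega$ yields $\min_{\overline{\Omega'}}|\nabla u|>0$. The main obstacle lies in the rigorous topological step: one must carefully track each arc of the level set $L$ until it first meets $\bo$, rule out the possibilities that an arc accumulates on $\bo$ or collides with another interior critical point before termination, and verify that the terminal boundary point really belongs to $\{\phi=u(z_{0})\}$. This hinges on a global index or degree argument coupled with the structure theorem for level sets of 2D elliptic solutions, which constitutes the technical heart of \cite{alessandrinimagnanini1994}.
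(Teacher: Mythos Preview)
The paper does not give a proof of this lemma; it is simply quoted from \cite[Theorem~2.7]{alessandrinimagnanini1994} and used as a black box. Your sketch is faithful to the original Alessandrini--Magnanini argument: the stream-function/Beltrami reformulation, the Bers--Vekua similarity principle giving isolated critical points with local structure $u-u(z_0)=\Re\bigl((z-z_0)^{N+1}h\bigr)$, and the level-set topology forced by the two-extremum boundary condition are precisely the ingredients of that reference.

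The one place worth tightening is the topological step you flag at the end. Rather than tracking each of the $\ge 4$ arcs of $\{u=u(z_0)\}$ individually to $\bo$ and worrying about collisions with other critical points or boundary accumulation, the cleaner argument in \cite{alessandrinimagnanini1994} bounds the \emph{total} multiplicity $\sum_k N_k$ of all interior critical points by $M-1$, where $2M$ is the number of sign changes of $\phi-t$ along $\bo$ for a generic value $t$. With exactly one maximum and one minimum on $\bo$ one has $M=1$, hence the sum is zero and there are no interior critical points at all. This global counting via an index/degree computation avoids the arc-by-arc analysis and handles multiple critical points simultaneously.
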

By using a standard compactness argument it is possible to give a
quantitative version of this result  \textcolor{black}{(see also \cite{2015-alessandrini-nesi})}. We restrict ourselves to  a
particular choice for $\phi$.
\begin{prop}
\label{prop:aless-magn}Let $\Omega\subseteq\R^{2}$ be a smooth,
bounded and convex domain and take $\Omega'\Subset\Omega$. Let $a\in\Cl^{0,\alpha}(\overline{\Omega};\R^{2\times2})$
be such that \eqref{eq:ellipticity_a-multi} and $\left\Vert a\right\Vert _{\Cl^{0,\alpha}(\overline{\Omega};\R^{2\times2})}\le C_{1}$
hold true for some $C_{1}>0$. Take $\beta\in\R$ with $\left|\beta\right|\le C_{1}$.
The solution $u\in\Cl^{1}(\overline{\Omega})$ to 
\[
\left\{ \begin{array}{l}
-\div(a\nabla u)=0\qquad\text{in \ensuremath{\Omega},}\\
u=x_{1}+\beta x_{2}\qquad\text{on \ensuremath{\partial\Omega},}
\end{array}\right.
\]
satisfies
\[
\min_{\overline{\Omega'}}\left|\nabla u\right|\ge C
\]
for some $C>0$ depending only on $\Omega$, $\Omega'$, $\maxa$,
$\alpha$ and $C_{1}$.\end{prop}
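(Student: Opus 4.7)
The plan is a compactness-contradiction argument that reduces the quantitative bound to the qualitative result in Lemma~\ref{lem:ales-magn}. Suppose, for contradiction, that the estimate fails. Then there exist sequences $a_{n}\in\Calpr$ (with $d=2$) satisfying \eqref{eq:ellipticity_a-multi} together with $\left\Vert a_{n}\right\Vert _{\Calpr}\le C_{1}$, scalars $\beta_{n}\in[-C_{1},C_{1}]$, and solutions $u_{n}\in\Coner$ of
\[
-\div(a_{n}\nabla u_{n})=0\ \text{in }\Omega,\qquad u_{n}=x_{1}+\beta_{n}x_{2}\ \text{on }\bo,
\]
such that $\min_{\overline{\Omega'}}\left|\nabla u_{n}\right|\to0$.

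First I would extract limits. By Arzel\`a--Ascoli the $\mathcal{C}^{0,\alpha}$-bounded sequence $a_{n}$ converges, up to a subsequence, in $\mathcal{C}^{0,\alpha'}(\overline{\Omega};\R^{2\times2})$ for any $\alpha'\in(0,\alpha)$ to some $a_{\infty}$ still satisfying \eqref{eq:ellipticity_a-multi} and the H\"older bound; similarly $\beta_{n}\to\beta_{\infty}\in[-C_{1},C_{1}]$. A standard energy estimate for the conductivity equation (with the boundary lift by the linear datum) gives a uniform $\Hone$ bound on $u_{n}$, and Proposition~\ref{pro: Helmholtz regularity} applied with $\k=0$ and $\order=1$ upgrades this to a uniform bound in $\Conealpr$. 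Compact embedding yields, up to a further subsequence, $u_{n}\to u_{\infty}$ in $\Coner$, and passing to the limit in the weak formulation shows that $u_{\infty}$ solves
\[
-\div(a_{\infty}\nabla u_{\infty})=0\ \text{in }\Omega,\qquad u_{\infty}=x_{1}+\beta_{\infty}x_{2}\ \text{on }\bo.
\]

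Picking $x_{n}\in\overline{\Omega'}$ with $\left|\nabla u_{n}(x_{n})\right|\to0$ and extracting a further subsequence with $x_{n}\to x_{\infty}\in\overline{\Omega'}$, $\mathcal{C}^{1}$-convergence gives $\left|\nabla u_{\infty}(x_{\infty})\right|=0$. I would then invoke Lemma~\ref{lem:ales-magn}: the limit datum $\phi_{\infty}(x)=x_{1}+\beta_{\infty}x_{2}$ has nonzero gradient $(1,\beta_{\infty})$, so on the boundary of the smooth convex domain $\Omega$ its restriction attains its maximum on a single connected set and its minimum on a single connected set, with monotone arcs in between. Hence $\left|\nabla u_{\infty}\right|>0$ on $\overline{\Omega'}$, contradicting $\left|\nabla u_{\infty}(x_{\infty})\right|=0$.

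The main obstacle is the verification, uniformly in $\beta\in[-C_{1},C_{1}]$, that the linear datum $x_{1}+\beta x_{2}$ satisfies the ``one minimum and one maximum'' hypothesis of Lemma~\ref{lem:ales-magn}; this is precisely where the convexity of $\Omega$ enters and is what forces $\beta_{\infty}$ to give an admissible limit datum. The remaining ingredients---Schauder-type regularity, compactness of bounded H\"older families, and stability of weak solutions under coefficient convergence---are standard and need no new ideas.
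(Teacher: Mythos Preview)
Your proposal is correct and follows essentially the same compactness--contradiction argument as the paper: assume failure along sequences $(a_{n},\beta_{n})$, extract limits by Arzel\`a--Ascoli, obtain $\Cl^{1}$-convergence of the solutions (you via a uniform $\Cl^{1,\alpha}$ bound and compact embedding, the paper via Proposition~\ref{pro: Helmholtz regularity} applied to $u_{n}-\tilde{u}$), and derive a critical point for the limit solution, contradicting Lemma~\ref{lem:ales-magn}. Your explicit discussion of why convexity forces the linear datum to have one maximum and one minimum on $\bo$ is a point the paper leaves implicit.
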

\begin{rem}
Under the assumption $a\in\Cl^{0,1}$, it is possible to give an explicit
expression for the constant $C$ \cite[Remark 3]{alessandrini-1988}.\end{rem}
\begin{proof}
By contradiction, assume that there exist two sequences $a_{n}\in\Cl^{0,\alpha}(\overline{\Omega};\R^{2\times2})$
and $\beta_{n}\in\R$ such that $a_{n}$ satisfies \eqref{eq:ellipticity_a-multi},
$\left\Vert a_{n}\right\Vert _{\Cl^{0,\alpha}(\overline{\Omega};\R^{2\times2})}\le C_{1}$,
$\left|\beta_{n}\right|\le C_{1}$ and
\[
\min_{\overline{\Omega'}}\left|\nabla u_{n}\right|\to0,
\]
where $u_{n}$ is the unique solution to
\[
\left\{ \begin{array}{l}
-\div(a_{n}\nabla u_{n})=0\qquad\text{in \ensuremath{\Omega},}\\
u=x_{1}+\beta_{n}x_{2}\qquad\text{on \ensuremath{\partial\Omega}.}
\end{array}\right.
\]
Take $x_{n}\in\overline{\Omega'}$ such that $\left|\nabla u_{n}(x_{n})\right|\to0$.
Up to a subsequence, we have that $x_{n}\to\tilde{x}$ for some $\tilde{x}\in\overline{\Omega'}$
and $\beta_{n}\to\tilde{\beta}$ for some $\tilde{\beta}\in[-C_{1},C_{1}]$.
By the Ascoli-Arzelà theorem, the embedding $C^{0,\alpha}\hookrightarrow\Cl^{0,\alpha/2}$
is compact. Thus, up to a subsequence, we have that $a_{n}\to\tilde{a}$
in $\Cl^{0,\alpha/2}(\overline{\Omega};\R^{2\times2})$ for some $\tilde{a}\in\Cl^{0,\alpha/2}(\overline{\Omega};\R^{2\times2})$
satisfying \eqref{eq:ellipticity_a-multi} and $\left\Vert \tilde{a}\right\Vert _{\Cl^{0,\alpha/2}(\overline{\Omega};\R^{2\times2})}\le C(\Omega)C_{1}$.

Let $\tilde{u}$ be the unique solution to
\[
\left\{ \begin{array}{l}
-\div(\tilde{a}\nabla\tilde{u})=0\qquad\text{in \ensuremath{\Omega},}\\
\tilde{u}=x_{1}+\tilde{\beta}x_{2}\qquad\text{on \ensuremath{\partial\Omega}.}
\end{array}\right.
\]
By looking at the equation satisfied by $u_{n}-\tilde{u}$, by Proposition~\ref{pro: Helmholtz regularity}
it is easy to see that $\left\Vert u_{n}-\tilde{u}\right\Vert _{\Cl^{1}(\overline{\Omega};\R)}\to0$.
Therefore
\[
\left|\nabla\tilde{u}(\tilde{x})\right|\le\left|\nabla\tilde{u}(\tilde{x})-\nabla\tilde{u}(x_{n})\right|+\left|\nabla\tilde{u}(x_{n})-\nabla u_{n}(x_{n})\right|+\left|\nabla u_{n}(x_{n})\right|\to0,
\]
whence $\left|\nabla\tilde{u}(\tilde{x})\right|=0$, which contradicts
Lemma~\ref{lem:ales-magn}, as $\Omega$ is  convex.\end{proof}

\end{document}